\documentclass[11pt]{amsart}
\usepackage{amssymb,enumerate,mathrsfs,geometry,tgpagella}
\usepackage[T1]{fontenc}
\usepackage{xcolor}
\usepackage[pagebackref=true, colorlinks, linkcolor=blue, citecolor=red]{hyperref}
\usepackage{microtype}

\newtheorem{theorem}{Theorem}[section]
\newtheorem{proposition}[theorem]{Proposition}
\newtheorem{corollary}[theorem]{Corollary}
\newtheorem{lemma}[theorem]{Lemma}
\newtheorem{maintheorem}{Theorem}

\theoremstyle{definition}

\newtheorem{example}[theorem]{Example}
\theoremstyle{plain}
\newtheorem*{singerthm}{Singer's Theorem}
\numberwithin{equation}{section}
\begin{document}
	\title[Liouvillian first integrals]
{Liouvillian first integrals of differential equations\raisebox{0.2ex}{:}\\ A Galoisian approach}

\author[P. Kumbhakar]{Partha Kumbhakar}
\address{Department of Mathematics, Ben-Gurion University of the Negev, Be'er Sheva 8410501, Israel}
\email{kumbhaka@post.bgu.ac.il}
\thanks{The first author was partially supported by the Israel Science Foundation
(ISF) grant no.\ 2958/24 through a postdoctoral fellowship at Ben-Gurion
University of the Negev. The second and third authors acknowledge the
support of the DST-FIST grant SR/FST/MS-I/2019/46(C)}

\author[C. Sahu]{Chitrarekha Sahu}
\address{Department of Mathematical Sciences, Indian Institute of Science Education and Research (IISER) Mohali, Sector 81, S.A.S. Nagar, Punjab 140306, India}
\email{ph20012@iisermohali.ac.in}
\author[V. R. Srinivasan]{Varadharaj R. Srinivasan}
\address{Department of Mathematical Sciences, Indian Institute of Science Education and Research (IISER) Mohali, Sector 81, S.A.S. Nagar, Punjab 140306, India}
\email{ravisri@iisermohali.ac.in}

	%\subjclass[2020]{Primary 12H05; Secondary 12H20, 34A05, 34M15}
	%\keywords{Liouvillian first integral, differential Galois theory, Picard-Vessiot ring, complete Picard-Vessiot closure, Singer's theorem, non-autonomous system}
	
	\begin{abstract}
Using differential Galois theory---specifically, Magid's theory of the
complete Picard-Vessiot closure---we show that a system of first-order
differential equations in $n$ variables admits a Liouvillian first integral
if and only if it admits a first integral in a Liouvillian Picard-Vessiot
extension, if and only if it admits a first integral in its Picard-Vessiot
ring. Consequently, a system with a Liouvillian first integral admits one in
a differential field obtained from the field of rational functions by first
taking a finite algebraic extension, then adjoining an exponential of an
integral, and then an integral. This yields a new proof of Singer's theorem
on Liouvillian first integrals of autonomous planar systems (Trans.\ Amer.\
Math.\ Soc.\ 333, 1992) and of its recent extension to autonomous systems in
$n$ variables by Aziz et al. (arXiv:2512.15522).
Our results hold over any differential field of characteristic zero with an
algebraically closed field of constants; in particular, they apply to non-autonomous systems. Finally, we exhibit a system that admits a first integral in a non-Liouvillian Picard-Vessiot extension but none in its
Picard-Vessiot ring and thus establishing that the
Liouvillian hypothesis cannot be dropped.
\end{abstract}
	\vspace*{-0.7cm} 
		\maketitle
        \vspace*{-0.7cm}
        {\small \tableofcontents}

	%%%%%%%%%%%%%%%%%%%%%%%%%%%%%%%%%%%%%%%%%%%%%%%%%%%%%%%%%%%%%%%%%%%%%%%%%%%%%
	\section{Introduction}\label{intro}
	%%%%%%%%%%%%%%%%%%%%%%%%%%%%%%%%%%%%%%%%%%%%%%%%%%%%%%%%%%%%%%%%%%%%%%%%%%%%%
Let $k$ be a differential field of characteristic zero with a derivation $'$, and assume its field of constants $C := \{a \in k : a' = 0\}$ is algebraically closed. Consider the system of first-order differential equations in $n$ variables
\begin{equation}\label{system-fode}\tag{S}
	\begin{cases}\begin{aligned}
		y_1' &= f_1(y_1, \dots, y_n),\\
		&\vdots \\
		y_n' &= f_n(y_1, \dots,y_n),
        \end{aligned}
	\end{cases}
    \end{equation}
where each $f_i$ is a rational function in $n$ variables with
coefficients in $k$. Let $K:=k(y_1,\dots,y_n)$ be the field of rational
functions, let $\delta_i:=\partial/\partial y_i$ for $i=1,\dots,n$, and let
$\delta_0$ be the unique extension of the derivation $'$ of $k$ to $K$ with
$\delta_0y_i=0$ for all $i$. These $n+1$ derivations commute, so $K$ is a
$\Delta$-field, that is, a field equipped with the set
$\Delta:=\{\delta_0,\delta_1,\dots,\delta_n\}$ of commuting derivations.
Its field of $\Delta$-constants, defined by
$C(K,\Delta):=\{a\in K : \delta a=0 \text{ for all } \delta\in\Delta\}$,
is $C$. We set
\begin{equation*}
	f_0:=1, \qquad
	D:=\sum_{i=0}^{n}f_i\delta_i=\delta_0+f_1\delta_1+\cdots+f_n\delta_n,
\end{equation*}
so that $D$ is the unique extension of $'$ to $K$ with $Dy_i=f_i$. The system \eqref{system-fode} is called \emph{autonomous} when $k=C$; then
$\delta_0$ is the zero derivation and is omitted, so that
$\Delta=\{\delta_1,\dots,\delta_n\}$,
$D=f_1\delta_1+\cdots+f_n\delta_n$, and all indices run from $1$ to $n$. When $k\neq C$, the system is called
\emph{non-autonomous}.

Let $R$ be a $\Delta$-ring extension of $K$, that is, $R$ is a ring extension of $K$ equipped with a set of commuting derivations extending those in $\Delta$ on $K$. Since $K \subset R$, the derivation $D$ extends to $R$ by the same formula. A \emph{first integral} of \eqref{system-fode} in $R$ is an element $e \in R$ such that $De = 0$ and $\delta_i e \neq 0$ for some $0 \le i \le n$. It is called a \emph{Liouvillian first integral} if it lies in a Liouvillian extension of $K$---a $\Delta$-field extension with no new constants, obtained by successively adjoining algebraic elements, integrals, or exponentials of integrals. For $k = \mathbb{C}$ with the zero derivation, the condition $De = 0$ provides the differential-algebraic analogue of the classical notion: an analytic
function $e(y_1,\dots,y_n)$ is constant along every solution of \eqref{system-fode} if and only if $De=0$ \cite[\S2]{Sin92}.

A classical problem is to determine which systems admit Liouvillian first integrals and what form these integrals can take. For autonomous planar polynomial systems, Singer \cite[Theorem 1 and Corollary]{Sin92} gave a complete and remarkably explicit answer.

\begin{singerthm}
The system
\begin{equation*}
	\begin{cases}
		y_1' = f_1(y_1,y_2)\\
		y_2' = f_2(y_1,y_2)
	\end{cases}
\end{equation*}
where $f_1,f_2\in\mathbb{C}[y_1,y_2]$, has a Liouvillian first integral
if and only if the differential form $f_2\,dy_1-f_1\,dy_2$ has an
integrating factor of the form
$\lambda=\exp\!\int\left(U\,dy_1+V\,dy_2\right)$ with $U,V\in\mathbb{C}(y_1,y_2)$
such that $\delta_2U=\delta_1V$, in which case
\begin{equation*}
	e=\int \lambda\,(f_2\,dy_1-f_1\,dy_2)
\end{equation*}
is a Liouvillian first integral.
\end{singerthm}
	
The significance of this result lies in the explicit form of the integral.
While a Liouvillian first integral may, a priori, lie in any Liouvillian extension, for planar systems there exists one in a much
simpler Liouvillian extension of $\mathbb{C}(y_1,y_2)$, obtained by adjoining an exponential of an integral followed by an
integral.

Singer's theorem has since been extended and reproved by several authors,
using a variety of techniques: Christopher \cite{Chr99}
recovered the Darbouxian shape of the integrating factor from invariant
algebraic curves; \.Zo\l\k{a}dek \cite{Zol98} gave a version for rational
$1$-forms in several variables by way of the extended monodromy group;
Moulin Ollagnier \cite{Oll96} treated homogeneous systems in three
variables; Casale \cite{Cas11} approached the problem through the Malgrange
Galois groupoid; and Zhang \cite{Zha16} related Liouvillian integrability of polynomial
systems in $n$ variables to the existence of Darboux Jacobian multipliers. Most recently, Aziz, Christopher, Pantazi and Walcher \cite{ACPW25} showed
that an autonomous rational system in $n>2$ variables with a Liouvillian
first integral has one obtained by two successive quadratures from a
finite algebraic extension of the rational function field. These works draw on tools such as monodromy, foliations, invariant
hypersurfaces, and Laurent and Puiseux series expansions.
	
	In this article, we use the Picard-Vessiot theory of $\Delta$-fields, developed by
Kolchin \cite{Kol52}, and the theory of complete Picard-Vessiot closures, developed by Magid \cite{Mag01, Mag02, Mag22}, to study Liouvillian first integrals of the (not necessarily autonomous) system \eqref{system-fode}. A \emph{Picard-Vessiot extension} of $K$ for an integrable system of
linear differential equations
\begin{equation}\label{lpde}
	\delta_iY=A_iY, \qquad A_i\in\mathrm{M}_m(K), \qquad 0\leq i\leq n,
\end{equation}
is a minimal $\Delta$-field extension $E$ of $K$ containing a fundamental solution matrix $Z\in\mathrm{GL}_m(E)$ of \eqref{lpde}. The $\Delta$-subring
$R:=K[Z,1/\det Z]\subseteq E$ is called the \emph{Picard-Vessiot
ring}. It is independent of the choice of $Z$, and $E$ is its field of
fractions. The \emph{differential Galois group} is the group of
$K$-$\Delta$-automorphisms of $E$ and it naturally admits a structure of a linear algebraic group
over $C$. A \emph{Liouvillian Picard-Vessiot extension} is a $\Delta$-field
extension of $K$ which is both a Liouvillian extension and a
Picard-Vessiot extension for some integrable system. Such extensions
are characterised by their differential Galois groups: a Picard-Vessiot extension is Liouvillian if and only if the
identity component of its differential Galois group is solvable \cite[Theorem~4]{Kol52}. Our proofs rest on the structure theory of
these solvable groups. The starting point of our investigation is that, when searching for first integrals in a Liouvillian Picard-Vessiot extension, it suffices to look in its Picard-Vessiot ring.

\begin{maintheorem}\label{main-thm-A}
    If the system \eqref{system-fode} admits a first integral in a Liouvillian
    Picard-Vessiot extension of $K$, then it admits a first
    integral in its Picard-Vessiot ring.
\end{maintheorem}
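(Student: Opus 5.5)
The plan is to show that the denominators of a suitable first integral can be cleared by a unit of the Picard--Vessiot ring.

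Let $E\supseteq K$ be the Liouvillian Picard--Vessiot extension, with Picard--Vessiot ring $R$ and differential Galois group $G$, so that $G^0$ is solvable. Let $e\in E$ be a first integral. Since $D=\sum f_i\delta_i$ with $f_i\in K$, every $g\in G$ commutes with $D$. Hence $F:=\{x\in E: Dx=0\}$ is a $G$-stable subfield of $E$, and $g(e)$ is again a first integral for every $g\in G$: if $\delta_i e\neq 0$ then $\delta_i g(e)=g(\delta_i e)\neq0$.

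The denominator ideal of $e$ is
\[
J_e:=\{b\in R : be\in R\}.
\]
It is a nonzero ideal of $R$, since $E=\operatorname{Frac}(R)$. It is also $D$-stable: if $be\in R$ then $D(be)=(Db)e\in R$. The key observation is the following.

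\emph{If $J_e$ contains a nonzero element $b$ with $\delta_i b/b\in K$ for all $0\le i\le n$, then $e\in R$.} Indeed, $bR$ is then a nonzero $\Delta$-ideal of $R$. A Picard--Vessiot ring is $\Delta$-simple, so $bR=R$ and $b$ is a unit. Consequently $e=(be)b^{-1}\in R$.

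The task is therefore to produce such a $b$, and this is where solvability of $G^0$ enters. I would proceed in three steps.

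First, $R$ is a locally finite $G$-module. So any nonzero $G$-stable ideal $J\subseteq R$ contains a nonzero finite-dimensional $G$-stable $C$-subspace $W$.

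Second, by the Lie--Kolchin theorem applied to the solvable connected group $G^0$ acting on $W$, there is a nonzero $b_0\in W$ that is a $G^0$-semi-invariant. Choose coset representatives $\sigma_1,\dots,\sigma_s$ of $G/G^0$ and set $b:=\prod_j\sigma_j(b_0)\in J$. A short check shows that $g(b)=\chi(g)\,b$ for all $g\in G$, for some character $\chi$.

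Third, each $\delta_i b/b$ is then $G$-invariant. By the Galois correspondence it lies in $K$, which gives exactly the hypothesis of the key observation.

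The main obstacle is that $J_e$ itself need not be $G$-stable, because $g(J_e)=J_{g(e)}$. So I have to choose the first integral so that its denominator ideal becomes $G$-stable.

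The first thing I would try is to work with the $G$-stable field $F$ of first integrals rather than with the single element $e$. Let $A:=F\cap R$ and consider
\[
\mathfrak a:=\{b\in R: bf\in R \text{ for all } f \text{ in a fixed finite-dimensional } G\text{-stable } C\text{-space } V\subseteq F\}.
\]
Such an ideal is $G$-stable. To obtain a suitable $V$, I would write $e=a/b_1$ with $a,b_1\in R$ and let $V$ be spanned by the quotients $g(a)/g(b_1)$. To bring these quotients to a common denominator, I would use the finite-dimensional space $\operatorname{span}_C(Gb_1)$ together with the norm trick above: replace $b_1$ by a $G$-semi-invariant multiple $b'$ of it, lying in the ideal generated by $\operatorname{span}_C(Gb_1)$. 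Then $b'e\in R$, and the key observation applied to $b'$ makes $e$ itself lie in $R$.

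If this common-denominator construction fails, the fallback is to use the torsor structure. After a finite algebraic extension $\bar K$ of $K$ we have $R\otimes_K\bar K\cong\bar K\otimes_C C[G]$. Along a composition series of $G^0$, I would then show directly that a $G^0$-invariant-modulo-denominators element of $F$ can be pushed into $R$. Only a Liouvillian ($G^0$ solvable) group permits this, and this is consistent with the counterexample announced in the abstract.
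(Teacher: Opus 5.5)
\textbf{There is a genuine gap.} Your reduction is correct, but it carries no content. Your own steps show that every nonzero $G$-stable ideal of $R$ contains a $G$-semi-invariant, and such an element is a unit. So $J_{e'}$ is $G$-stable exactly when $e'\in R$, and ``choose the first integral so that its denominator ideal is $G$-stable'' just restates the theorem.

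Your concrete plan then tries to prove that the given $e$ itself lies in $R$. That is false. In Example~\ref{example-dichotomy}(a), $1/(1+y_2u)$ is a first integral in $E$, but $1+y_2u$ is not a unit of the Laurent polynomial ring $R=K[u,u^{-1}]$.

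The step that fails is ``replace $b_1$ by a $G$-semi-invariant multiple $b'$ of it; then $b'e\in R$.'' It fails for three reasons:
\begin{enumerate}[(a)]
\item A semi-invariant is a unit, so it can be a multiple of $b_1$ only if $b_1$ is already a unit.
\item Membership $b'=\sum_g r_g\,g(b_1)$ in the ideal generated by $Gb_1$ says nothing about $b'e$. That ideal is nonzero and $G$-stable, hence all of $R$ anyway. Moreover, only $g(b_1)g(e)=g(a)$ is known to lie in $R$, not $g(b_1)e$.
\item Your $V$ is circular. The $C$-span of the quotients $g(a)/g(b_1)=g(e)$ is finite-dimensional only if $e\in R$ already, by Theorem~\ref{char-PV-ring}. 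So $\mathfrak a$ cannot be built before the conclusion is known.
\end{enumerate}
The torsor fallback does not say which first integral to use, so it is not yet an argument.

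\textbf{What is missing} is a mechanism for manufacturing a \emph{new} first integral. The paper's route has two steps.
\begin{enumerate}[(1)]
\item Lemma~\ref{lemma-setup} produces a nonzero $w\in J_e$ with $Dw=\alpha w$ and $\alpha\in K$. One takes the $D$-Wronskian of a $C(E,D)$-basis of the $C(E,D)$-span of the $G$-orbit of some nonzero $v\in J_e$. This $w$ lies in $J_e$ because $J_e$ is a $D$-ideal, and $Dw/w$ is $G$-invariant.
\item Then $\sigma(w)/w\in C(E,D)$ for all $\sigma\in G^0$, and Lie--Kolchin on $\mathrm{Span}_C(G^0w)$ splits into two cases.
\begin{itemize}
\item If $w$ is a $G^0$-semi-invariant, it is a unit of $R$; this is where your key observation fits. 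Then $e=w^{-1}(we)\in R$.
\item Otherwise there is a triangular basis $e_1w,e_2w,\dots$ with $e_i\in C(E,D)$. The group $G^0$ acts on the new first integral $e_2/e_1$ by affine maps $x\mapsto ax+b$. Hence its orbit spans a finite-dimensional space, and $e_2/e_1\in R$ by Theorem~\ref{char-PV-ring}.
\end{itemize}
\end{enumerate}
In the second case the first integral found in $R$ is not $e$. The example above shows this replacement cannot be avoided.
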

	
Any Liouvillian extension can be embedded in a tower of Liouvillian Picard-Vessiot extensions. Consequently, every Liouvillian first integral of \eqref{system-fode} must belong to such a tower.  Using Theorem~\ref{main-thm-A} and Magid's theory of the complete Picard-Vessiot closure, together with the fact that, for a first integral $e$, the vector $(\delta_0 e,\delta_1 e,\dots,\delta_n e)^{\mathsf t}$ satisfies the linear differential equation
\begin{equation*}
	DY=-(\delta_i f_j)_{0\leq i,j\leq n}\,Y
\end{equation*}
over the differential field $K$ with derivation $D$, we show that a system admitting a Liouvillian first integral must admit one in a single Liouvillian Picard-Vessiot extension. Note that $(\delta_i f_j)$ is the transpose of the \emph{Jacobian matrix} associated to the rational functions $f_0, f_1,\dots,f_n$. Applying Theorem~\ref{main-thm-A} once more, together with the Lie-Kolchin theorem on solvable linear algebraic groups, we then show that one such first integral has the form described in the next theorem.

\begin{maintheorem}\label{main-thm-B}
If the system \eqref{system-fode} admits a Liouvillian first integral, then
there exist a Liouvillian first integral $e$ and a finite algebraic
extension $\widetilde{K}$ of $K$ such that, for all $0\leq i,j\leq n$ with
$\delta_j e\neq 0$,
\begin{enumerate}[(i)]
	\item \label{thmB1} $\dfrac{\delta_i e}{\delta_j e}\in\widetilde{K},$
	\item \label{thmB2} $\dfrac{\delta_i(\delta_j e)}{\delta_j e}
	\in\widetilde{K}$.
\end{enumerate}
When $|\Delta|=2$, one may take $\widetilde{K}=K$.
\end{maintheorem}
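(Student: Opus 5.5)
Our approach is to reduce to a single Liouvillian Picard-Vessiot extension, pass to its Picard-Vessiot ring by Theorem~\ref{main-thm-A}, and then use the Lie-Kolchin structure of the Galois group to choose a first integral whose Galois orbit is as simple as possible.

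\emph{Step 1: one Picard-Vessiot extension suffices.} Let $e_0$ be a Liouvillian first integral, and embed its Liouvillian extension in a tower of Liouvillian Picard-Vessiot extensions. Some $\delta_j e_0\neq 0$. The vector $v=(\delta_0e_0,\dots,\delta_ne_0)^{\mathsf t}$ is a nonzero solution of
\[
DY=-(\delta_if_j)\,Y
\]
over $(K,D)$. Using Magid's complete Picard-Vessiot closure, all solutions of this linear system in the Liouvillian closure lie in a single Liouvillian Picard-Vessiot extension $E$ of $K$ (viewed as a $\Delta$-extension). The aim is to show that $e_0$, or a suitable replacement, lies in $E$. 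This uses that $e_0$ is determined up to an additive constant by $v$ through the integrable system $\delta_ie=v_i$. Theorem~\ref{main-thm-A} then gives a first integral in the Picard-Vessiot ring $R$ of $E$.

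\emph{Step 2: choose a good first integral.} Let $G$ be the Galois group of $E$, so that $G^0$ is solvable. The set
\[
V=\{e\in R: De=0\}
\]
is a $G$-stable $C$-vector space, and every finite-dimensional subspace is $G$-stable. Take the nonzero finite-dimensional $G^0$-submodule spanned by the first integral together with constants. By Lie-Kolchin, $G^0$ has a common eigenvector $e$ in this submodule that is not constant. We expect this to be a first integral, after subtracting a constant if needed. Consequently $\sigma e=\chi(\sigma)e$ or $\sigma e=e+c_\sigma$ for $\sigma\in G^0$. Taking a $G^0$-semi-invariant of minimal weight in the non-constant part should give one of two forms: $\sigma(e)=\chi e+c$ in general, or, after normalisation, $\sigma(\delta_je)=\chi(\sigma)\,\delta_je$.

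Concretely, $\delta_je$ is then a $G^0$-semi-invariant with a common character $\chi$ for all $j$. This is because $\delta_j$ commutes with $\sigma$ and $\sigma e-\chi e$ is constant. Hence the ratio $\delta_ie/\delta_je$ is $G^0$-invariant and lies in $E^{G^0}=\widetilde K$, a finite algebraic extension of $K$; this proves (i). Similarly, $\delta_i(\delta_je)/\delta_je$ is the logarithmic derivative of a semi-invariant, so it is $G^0$-fixed and also lies in $\widetilde K$; this proves (ii).

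\emph{Step 3: the case $|\Delta|=2$.} Here we upgrade from $G^0$ to all of $G$. In that case $D$ has a one-dimensional kernel direction modulo constants: the vector $(\delta_0e,\delta_1e)$ is determined up to a scalar by $De=0$, since $f_0\delta_0e+f_1\delta_1e=0$ gives $\delta_0e/\delta_1e=-f_1/f_0\in K$. Thus (i) is automatic. For (ii), note that the $C$-span of all $\delta_1(\sigma e)$ consists of elements $h\,\delta_1e$ with $h\in \widetilde K$. We will show that the $G$-module they form is one-dimensional up to the integrating-factor relation. The intended argument is that $\lambda=\delta_1e/f_0$ (an integrating factor) satisfies $D\lambda=-(\operatorname{div})\lambda$, so $\lambda$ solves a rank-one equation over $K$. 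From this, $\delta_i\lambda/\lambda$ is the logarithmic derivative of a solution of a first-order linear system; to land in $K$ one uses integrability plus a norm/trace argument over the finite extension $\widetilde K/K$, replacing $\lambda$ by $\prod_\sigma\sigma\lambda$.

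\emph{Main obstacle.} The hardest step is Step 1: showing that a first integral in an arbitrary Liouvillian tower descends to one Picard-Vessiot extension. Here we would lean most heavily on Magid's closure and on the linear system satisfied by the gradient.
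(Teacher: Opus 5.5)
\textbf{The gap: Step 1.} Step 1 carries essentially all of the difficulty, and the mechanism you propose for it is not valid. The system \eqref{equ-jac-D} is linear over the $D$-field $(K,D)$, so its solutions in a $\Delta$-field $L\subseteq K_\infty$ form a vector space over $C(L,D)$, not over $C$. As soon as $L$ contains a first integral $e_0$, the field $C(L,D)$ contains $e_0$, $\exp(e_0)$, $\exp(\exp(e_0))$, \dots\ (all Liouvillian $D$-constants), and $cv$ is a solution for each such $c$. So Picard-Vessiot finiteness is unavailable, and there is no reason for ``all solutions in the Liouvillian closure'' to lie in one Picard-Vessiot extension.

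Your remark that $e_0$ is recovered from $v$ by solving $\delta_iY=v_i$ is correct; it is Lemma~\ref{lemma-stepA}(ii). But it helps only once the gradient is known to lie in a finite-dimensional $\Delta$-stable $K$-subspace, and that is exactly what must be proved. The paper does this by induction on the length of a tower of Liouvillian Picard-Vessiot extensions, descending one level at a time (Lemma~\ref{lemma-trade}):
\begin{enumerate}[(a)]
\item At the top level it first replaces the first integral by one, $e$, with $\delta_i(\delta_je)/\delta_je$ algebraic over $F_1$ (Corollary~\ref{thm-in-Liouv-PV}).
\item It then forms the normal extension $\mathcal E$ generated over $\overline{F_1}$ by all $\delta_i(\sigma e)$, $\sigma\in\mathscr G(K_\infty|F_0)$, and splits into two cases.
\item If $C(\mathcal E,D)=C$, the solutions of \eqref{equ-jac-D} in $\mathcal E$ do form a $C$-space of dimension at most $n+1$. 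So the Galois orbit of the gradient is finite-dimensional, and Proposition~\ref{remark-orbit-general}, Lemma~\ref{lemma-stepA} and Proposition~\ref{lemma-holo-liouv} place $e$ in a Liouvillian Picard-Vessiot extension of $F_0$.
\item If $\mathcal E$ contains a first integral, its finitely generated pieces are torus-type extensions of $\overline{F_1}$. One extracts a first integral $e_1$ with $\delta_ie_1/e_1\in\overline{F_1}$ and passes to its logarithm $\eta$, to which the previous argument applies.
\end{enumerate}
This dichotomy, and the need to change the first integral along the way, are the missing ideas. As written, Step 1 asserts the conclusion.

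\textbf{Step 2.} Lie-Kolchin need not give a non-constant eigenvector: if $\sigma e=e+c_\sigma$, the only eigenvector in $\mathrm{Span}_C\{1,e\}$ is $1$. Also, not every finite-dimensional subspace of $V$ is $\mathscr G$-stable. The version you end up using is nevertheless correct. Apply Lie-Kolchin to $W/C$, where $W:=C+\mathrm{Span}_C\{\sigma\tilde e:\sigma\in\mathscr G^0\}$ for a first integral $\tilde e\in R$. This gives $e\in W\setminus C$ with $\sigma e=\chi(\sigma)e+c_\sigma$, which yields (i) and (ii) over $E^{\mathscr G^0}$. It is a slightly more direct route than the case split in Corollary~\ref{thm-in-Liouv-PV}, although the induction above needs that corollary's second statement.

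\textbf{Step 3.} Your claim about $\mathrm{Span}_C\{\delta_1(\sigma e)\}$ is unjustified, since elements of $\mathscr G\setminus\mathscr G^0$ twist the character, and it is not needed. The trace idea is the right one (Proposition~\ref{prop-descent}), but the normalisation is wrong. The new factor must be a nonzero solution $\tilde\lambda$ of $\delta_iY=U_iY$ with $U_i=\frac1N\mathrm{Tr}_{\widetilde K|K}(\delta_i\lambda/\lambda)$, so morally $(\prod_\sigma\sigma\lambda)^{1/N}$ rather than $\prod_\sigma\sigma\lambda$. Only the normalised average preserves $D\tilde\lambda=-(\delta_0f_0+\delta_1f_1)\tilde\lambda$; the product gives $N$ times the divergence. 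That relation is what makes $\delta_0Y=-f_1\tilde\lambda$, $\delta_1Y=f_0\tilde\lambda$ integrable, and you must then integrate this system to produce the new Liouvillian first integral.
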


It is worth noting that, for autonomous planar systems over $\mathbb{C}$,
conditions \eqref{thmB1} and \eqref{thmB2} can be obtained from the
integrating factor of Singer's theorem, and conversely, the integrating factor can be obtained from
\eqref{thmB1} and \eqref{thmB2}. Suppose $f_1f_2\neq0$. Let $\lambda$ be
the integrating factor provided by Singer's theorem and $e$ be the associated
first integral. Then $\delta_1e=\lambda f_2$ and
$\delta_2e=-\lambda f_1$. Hence $\delta_1e/\delta_2e=-f_2/f_1\in K$ and
\begin{equation*}
	\frac{\delta_i(\delta_1e)}{\delta_1e}
	=\frac{\delta_i\lambda}{\lambda}+\frac{\delta_if_2}{f_2}\in K,
	\qquad
	\frac{\delta_i(\delta_2e)}{\delta_2e}
	=\frac{\delta_i\lambda}{\lambda}+\frac{\delta_if_1}{f_1}\in K
	\qquad (i=1,2),
\end{equation*}
which are \eqref{thmB1} and \eqref{thmB2} with $\widetilde K=K$.
Conversely, if $e$ is as in Theorem~\ref{main-thm-B} with
$\widetilde K=K$, then $\lambda:=-\delta_2e/f_1$ satisfies
$\delta_1e=\lambda f_2$ and $\delta_2e=-\lambda f_1$ and
$\delta_i\lambda/\lambda=\delta_i(\delta_2e)/\delta_2e-\delta_if_1/f_1\in K$
by \eqref{thmB2}, so $\lambda=\exp\int(U\,dy_1+V\,dy_2)$ with
$U:=\delta_1\lambda/\lambda$, $V:=\delta_2\lambda/\lambda\in K$ is an
integrating factor of $f_2\,dy_1-f_1\,dy_2$, as in Singer's theorem. 

We stress that conditions \eqref{thmB1} and \eqref{thmB2} of
Theorem~\ref{main-thm-B} are obtained without any prior description of
the integrating factor. They are consequences of the structure of the
differential Galois group of a Liouvillian Picard-Vessiot extension
containing $e$, and the integrating factor is then read off from them.

For autonomous systems in $n$ variables, an analogous computation relates
Theorem~\ref{main-thm-B} to \cite[Theorem~3]{ACPW25}
(see Proposition~\ref{prop-dictionary}). When $n>2$, one cannot in general take $\widetilde K=K$ in Theorem~\ref{main-thm-B}; an example of
such an autonomous system appears in \cite[Section 5.2]{CPW25}. We show in Proposition~\ref{prop-descent} that if condition \eqref{thmB1} holds over $K$, then $e$ can be replaced by a Liouvillian first integral $\tilde e$ satisfying \eqref{thmB1} and \eqref{thmB2} over $K$, so that one may take $\widetilde K=K$ in Theorem~\ref{main-thm-B}. If $|\Delta|=2$, then the equation $De=0$ forces the ratio of the two
derivatives of any first integral $e$ to lie in $K$, so \eqref{thmB1} holds
automatically---this is how the last assertion of
Theorem~\ref{main-thm-B} is obtained.

Finally, we show that Theorem~\ref{main-thm-A} is sharp. We exhibit a
system admitting a first integral in a
Picard-Vessiot extension whose differential Galois group is
$\mathrm{SL}_2(\mathbb C)$, but admitting no first integral in its
Picard-Vessiot ring. Therefore, the Liouvillian
hypothesis in Theorem~\ref{main-thm-A} cannot be dropped. In a second
example, we exhibit a system having first integrals in a Picard-Vessiot
ring, none of which is Liouvillian. Thus, a first integral in a Picard-Vessiot ring need not be Liouvillian.

%%%%%%%%%%%%%%%%%%%%%%%%%%%%%%%%%%%%%%%%%%%%%%%%%%%%%%%%%%%%%%%%%%%%%%%%%%%%%
\section{Picard-Vessiot theory of partial differential fields}\label{prelim}
%%%%%%%%%%%%%%%%%%%%%%%%%%%%%%%%%%%%%%%%%%%%%%%%%%%%%%%%%%%%%%%%%%%%%%%%%%%%%

In this section, we briefly recall the Picard-Vessiot theory of $\Delta$-fields following \cite{Kol52,vdPS03}. Throughout this paper, all rings are assumed to be commutative with identity and to contain the field of rational numbers $\mathbb{Q}$. 

A \emph{$\Delta$-ring} is a ring equipped with a finite set $\Delta$ of commuting derivations. If a $\Delta$-ring is a field, it is called a \emph{$\Delta$-field}. The \emph{ring of constants} of a $\Delta$-ring $R$ is defined as
\begin{equation*}
    C(R,\Delta) := \{a \in R : \delta(a) = 0 \text{ for all } \delta \in \Delta\}.
\end{equation*} 
If $R$ is a $\Delta$-field, then $C(R, \Delta)$ is a field. The notions of  $\Delta$-subring, $\Delta$-ring extension, $\Delta$-ideal, $\Delta$-homomorphism, and $\Delta$-automorphism are defined naturally (see \cite[Chapter 1]{Kol73}). When $\Delta = \{D\}$ consists of a single derivation, we use the standard prefix $D$ instead of $\{D\}$, writing $D$-ring, $D$-ideal, and $C(R,D)$. For a $\Delta$-field extension $E$ of a $\Delta$-field $K$, the group of $\Delta$-automorphisms 
of $E$ that fix each element of $K$ is denoted by $\mathscr{G}(E|K)$ and is 
called the \emph{differential Galois group} of $E$ over $K$. These automorphisms are
called $K$-$\Delta$-automorphisms.  The free commutative monoid generated by $\Delta$ is denoted by $\Theta$.

For the rest of this section, we fix a $\Delta$-field $K$ with $\Delta = \{\delta_1, \dots, \delta_r\}$, and assume that its field of constants $C := C(K,\Delta)$ is algebraically closed. A system of linear differential equations
\begin{equation}\label{lin-sys-equ}
    \delta_i Y = A_i Y, \quad A_i \in \mathrm{M}_m(K), \quad i = 1, \dots, r,
\end{equation}
is said to be \emph{integrable} if $\delta_i A_j - \delta_j A_i = A_i A_j - A_j A_i$ for all $1 \le i, j \le r$. A \emph{Picard-Vessiot extension} $E$ of the $\Delta$-field $K$ for an integrable system \eqref{lin-sys-equ} is a $\Delta$-field extension of $K$ such that $C(E,\Delta)=C$, there is a \emph{fundamental solution matrix} $Z\in \mathrm{GL}_m(E)$ for the system, that is, $\delta_i(Z)=A_iZ$ for all $1\leq i\leq r$, and $E$ is the field of fractions of the $K$-algebra $R$ generated by the entries of $Z$ and $1/\mathrm{det}(Z)$. The $K$-algebra $R$ is a $\Delta$-ring called  the \emph{Picard-Vessiot ring} over $K$ for the integrable system \eqref{lin-sys-equ}.  

%is a $\Delta$-ring extension $R$ of $K$ satisfying the following properties: \begin{enumerate}[(i)] \item $R$ is $\Delta$-simple; that is, the only $\Delta$-ideals of $R$ are $(0)$ and $R$. \item There exists a matrix $Z \in \mathrm{GL}_m(R)$, called a \emph{fundamental solution matrix}, such that $\delta_i Z = A_i Z$ for all $i = 1, \dots, r$. \item $R$ is generated as a $K$-algebra by the entries of $Z$ and $1/\det Z$.\end{enumerate}

Under the assumption that $C$ is algebraically closed, Picard-Vessiot extensions always exist for an integrable system and are unique up to  $K$-$\Delta$-isomorphisms. There is an intrinsic characterisation of a Picard-Vessiot ring:

\begin{theorem}\label{char-PV-ring} Let $E$ be a Picard-Vessiot extension of $K$, $R$ be its Picard-Vessiot ring, and $f\in E$. The
following are equivalent.
\begin{enumerate}[(i)]
	\item $f\in R$.
	\item The $C$-vector space $V_f:=\mathrm{Span}_C\{\sigma(f):\sigma\in\mathscr{G}(E|K)\}$
	is finite-dimensional.
	\item $f$ lies in a finite-dimensional $\Delta$-stable $K$-subspace of
	$E$.
\end{enumerate}
\end{theorem}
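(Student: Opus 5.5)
The plan is to prove the cycle (i)$\Rightarrow$(iii)$\Rightarrow$(ii)$\Rightarrow$(i), using the torsor description of the Picard-Vessiot ring. Write $G:=\mathscr{G}(E|K)$ and $R=K[Z,1/\det Z]$. Every $\sigma\in G$ satisfies $\sigma(Z)=Z[\sigma]$ with $[\sigma]\in\mathrm{GL}_m(C)$, because $\sigma(Z)$ is again a fundamental solution matrix and $C(E,\Delta)=C$.

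For (i)$\Rightarrow$(iii), I would filter $R$ by degree. Let $R_d$ be the $K$-span of all monomials of total degree at most $d$ in the entries of $Z$ and in $1/\det Z$. Each $R_d$ is finite-dimensional over $K$. It is $\Delta$-stable because $\delta_i Z=A_iZ$ is linear in the entries of $Z$, and $\delta_i(1/\det Z)=-\mathrm{tr}(A_i)/\det Z$. Every $f\in R$ lies in some $R_d$.

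For (iii)$\Rightarrow$(ii), let $W\subseteq E$ be a finite-dimensional $\Delta$-stable $K$-subspace containing $f$, with basis $w_1,\dots,w_s$. The vector $w=(w_1,\dots,w_s)^{\mathsf t}$ satisfies an integrable system $\delta_i w=B_iw$ with $B_i\in\mathrm{M}_s(K)$. The $C$-space of solutions of this system in $E$ has dimension at most $s$, by the Wronskian lemma for $\Delta$-fields with constants $C$. Each $\sigma\in G$ maps solutions to solutions, so $\sigma(w)$ lies in this finite-dimensional $C$-space $\mathcal{S}$. Writing $f=\sum a_jw_j$ with $a_j\in K$, we get $\sigma(f)=\sum a_j\sigma(w_j)$. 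Hence $V_f$ lies in the image of $\mathcal{S}$ under the $C$-linear map $v\mapsto\sum a_jv_j$, which is finite-dimensional.

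The main obstacle is (ii)$\Rightarrow$(i), which needs the Galois correspondence and torsor theorem, presumably quoted from Kolchin or van der Put--Singer. Suppose $V_f$ has $C$-basis $f_1=f,\dots,f_s$. I would first reduce to $f\in E$ with $\sigma(f)=\sum_j c_{j}(\sigma)f_j$. Consider the ideal $I=\{r\in R: rf_j\in R \text{ for all } j\}$. It is nonzero, since the $f_j$ have denominators in $R$. It is $G$-stable, because $V_f$ is $G$-stable: if $r\in I$ and $\sigma\in G$, then $\sigma(r)f_j=\sigma(r\sigma^{-1}(f_j))$, and $\sigma^{-1}(f_j)$ is a $C$-combination of the $f_k$. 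I would then use the fact that $R\otimes_K \overline{K}$-style torsor arguments make a nonzero $G$-stable ideal of $R$ the unit ideal. Indeed, $R\otimes_K R\cong R\otimes_C C[G]$ shows that the $G$-stable ideals of $R$ are extended from $K$. So $I=R$, hence $1\in I$ and $f\in R$.

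Alternatively, $G$-stable ideals correspond to $\Delta$-ideals via $(E\otimes)$, and $R$ is $\Delta$-simple. For this route I would show that $I$ is a $\Delta$-ideal. That follows because $\delta(f_j)$ also has finite-dimensional orbit span, since $\delta$ commutes with $G$. I would try this $\Delta$-simplicity route first, as it is the shortest given the standard fact that the Picard-Vessiot ring is $\Delta$-simple.
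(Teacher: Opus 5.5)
Your (i)$\Rightarrow$(iii) via the degree filtration $R_d$ and your (iii)$\Rightarrow$(ii) via the at most $s$-dimensional $C$-space of solutions of $\delta_iY=B_iY$ are correct. The torsor argument for (ii)$\Rightarrow$(i) can also be made to work. It needs the torsor theorem for $\Delta$-Picard-Vessiot rings, the passage from $\mathscr{G}(E|K)$-stability to stability under the coaction of $C[G]$, and a descent step showing a stable ideal $I$ equals $(I\cap K)R$; your single sentence leaves all of this implicit.

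The $\Delta$-simplicity route you say you would try first has a genuine gap. The ideal $I=\{r\in R: rf_j\in R \text{ for all } j\}$ is $\mathscr{G}(E|K)$-stable. But for $r\in I$ you have $\delta(r)f_j=\delta(rf_j)-r\,\delta(f_j)$, so you would need $r\,\delta(f_j)\in R$, and nothing gives this. $V_f$ is only $\mathscr{G}(E|K)$-stable, and $\delta(f_j)$ need not lie in $V_f$, or even in $KV_f$. In the paper's Airy example, $f=u_1$ has $V_f=\mathrm{Span}_C\{u_1,u_2\}$, while $\delta_1u_1\notin Ku_1+Ku_2$.

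Knowing that $\delta(f_j)$ again has finite-dimensional orbit span only says that $\delta(f_j)$ satisfies (ii), which is the very hypothesis you are trying to exploit. If you enlarge the family to all $\theta f_j$ with $\theta\in\Theta$, then $I$ becomes an infinite intersection of nonzero ideals. You cannot see that this intersection is nonzero without knowing that $\mathrm{Span}_K\{\theta f_j\}$ is finite-dimensional, which is (iii). The claimed correspondence between $\mathscr{G}$-stable ideals and $\Delta$-ideals ``by tensoring with $E$'' is not a fact you can quote to bypass this; its honest version is the torsor theorem again.

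The paper gives no argument beyond citing \cite[Corollary~1.38]{vdPS03}. The missing idea on your simplicity route is to prove (ii)$\Rightarrow$(iii) first by generalized Wronskians, after which $\Delta$-simplicity applies directly.
\begin{enumerate}[(1)]
\item Let $f_1,\dots,f_s$ be a $C$-basis of $V_f$, and choose $\theta_1,\dots,\theta_s\in\Theta$ with $\Lambda=\det(\theta_if_j)\neq0$ (Kolchin).
\item Since each $\sigma$ commutes with $\Theta$, $\sigma$ acts on the matrix $(\theta_if_j)$ by right multiplication by $C_\sigma$. So $\Lambda$, and each Cramer numerator obtained by replacing a row with $(\theta f_j)_j$, is multiplied by $\det C_\sigma$.
\item Hence the coefficients in $\theta f_j=\sum_i a_i(\theta)\,\theta_if_j$ lie in $E^{\mathscr{G}(E|K)}=K$. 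Therefore $W:=\mathrm{Span}_K\{\theta_if_j\}$ is a finite-dimensional $\Delta$-stable $K$-space containing $f$.
\item Now $I_W:=\{r\in R: rW\subseteq R\}$ is nonzero: clear the denominators of a $K$-basis of $W$.
\item $I_W$ is a $\Delta$-ideal precisely because $\delta W\subseteq W$. So $I_W=R$ by $\Delta$-simplicity, and $f\in R$.
\end{enumerate}
These two ingredients are the ones the paper itself reuses in Proposition~\ref{remark-orbit-general} and Lemma~\ref{lemma-setup}. This route needs only $E^{\mathscr{G}(E|K)}=K$, Kolchin's Wronskian criterion and $\Delta$-simplicity of $R$. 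Your torsor route replaces these with considerably heavier torsor and descent machinery.
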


\begin{proof} This follows, mutatis mutandis, from the proof of \cite[Corollary 1.38]{vdPS03}.\end{proof}

\subsection{Differential Galois correspondence}\label{galoiscorrespondence}
Let $E$ be a Picard-Vessiot extension of $K$ for the integrable system \eqref{lin-sys-equ}. The differential Galois group $\mathscr{G}(E|K)$ possesses the structure of a linear algebraic group defined over $C$. Indeed, if $Z$ is a fundamental solution matrix of the system, then for any $\sigma \in \mathscr{G}(E|K)$, the matrix $\sigma(Z)$ is also a fundamental solution matrix, implying that $\sigma(Z) = Z C_\sigma$ for a unique $C_\sigma \in \mathrm{GL}_m(C)$. The faithful representation $\mathscr{G}(E|K) \to \mathrm{GL}_m(C)$ given by $\sigma \mapsto C_\sigma$ realises $\mathscr{G}(E|K)$ as a Zariski-closed subgroup of $\mathrm{GL}_m(C)$.

As in classical  Galois theory, there is a Galois correspondence between the intermediate
$\Delta$-fields $K\subseteq L\subseteq E$ and the algebraic subgroups
$\mathscr{H}$ of $\mathscr{G}(E|K)$ (see \cite{Kol52} and
\cite[Proposition~1.34 and Appendix~D]{vdPS03}). It is given by the maps
\begin{align*}
L &\longmapsto \mathscr{G}(E|L) := \{ \sigma \in \mathscr{G}(E|K) : \sigma(a)=a \text{ for all } a \in L \}, \\
\mathscr{H} &\longmapsto E^{\mathscr{H}} := \{ a \in E : \sigma(a)=a \text{ for all } \sigma \in \mathscr{H} \}.
\end{align*}
Under this correspondence, an algebraic subgroup $\mathscr{H}$ is normal if
and only if $E^{\mathscr{H}}$ is a Picard-Vessiot extension of $K$, and in
that case $\mathscr{G}(E^{\mathscr{H}}|K)\cong\mathscr{G}(E|K)/\mathscr{H}$.
The relative algebraic closure of $K$ in $E$ is precisely the fixed field
$E^{\mathscr{G}(E|K)^0}$, where $\mathscr{G}(E|K)^0$ denotes the identity
component of $\mathscr{G}(E|K)$. Moreover, $E^{\mathscr{G}(E|K)^0}$ is a
finite algebraic Galois extension of $K$ whose Galois group is isomorphic to
the finite group $\mathscr{G}(E|K)/\mathscr{G}(E|K)^0$.

\subsection{The complete Picard-Vessiot closure}
In \cite{Mag01,Mag02,Mag22}, Magid developed the theory of Picard-Vessiot closures for fields equipped with a single derivation. His constructions and proofs generalise directly to the case of a finite set of commuting derivations. For convenience, we shall only recall few results that are relevant to this article.

%A \emph{Picard-Vessiot closure} of $K$ is a $\Delta$-field extension $K_1$ satisfying the following properties: \begin{enumerate}[(i)] \item $C(K_1,\Delta) = C$;  \item $K_1$ is the union of its Picard-Vessiot subextensions, i.e. $$K_1=\bigcup\{L: K\subseteq L\subseteq K_1, L \text{ is a Picard-Vessiot extension of } K\};$$ \item every Picard-Vessiot extension $L$ of $K$ admits a $K$-$\Delta$-embedding into $K_1$. \end{enumerate}

%Such an extension exists and is unique up to a $K$-$\Delta$-isomorphism (see \cite[Theorem, p.~6]{Mag01} and \cite[Theorems~11 and~17]{Mag02}).  Since $K_1$ is a $\Delta$-field whose field of constants $C$ is algebraically closed, the construction can be iterated. Set $K_0 := K$, and for each $i \ge 1$, let $K_i$ be a Picard-Vessiot closure of $K_{i-1}$. The  union
%\[K_\infty := \bigcup_{i \ge 0} K_i\] is a $\Delta$-field called the \emph{complete Picard-Vessiot closure} of $K$. Although each intermediate field $K_i$ is determined only up to $K_{i-1}$-$\Delta$-isomorphism, their union $K_\infty$ is unique up to $K$-$\Delta$-isomorphism (\cite[\S2]{Mag22}). The field $K_\infty$ can be characterised intrinsically by the following theorem.

\begin{theorem} {{\cite[Theorem~1]{Mag22}}}\label{thm-K-inf}
There exists a $\Delta$-field extension $K_\infty$ of $K$ having the following properties:
\begin{enumerate}[(i)]
\item $C(K_\infty,\Delta)=C$;
\item every integrable system with coefficients in $K_\infty$ has a
  fundamental solution matrix with entries in $K_\infty$;
\item if $E$ is a $\Delta$-field such that $K\subseteq E\subseteq K_\infty$ and every integrable system with coefficients in
  $E$ has a fundamental solution matrix with entries in $E$, then
  $E=K_\infty$.
\end{enumerate}
Moreover, any $\Delta$-field extension of $K$ with these three
properties is $K$-$\Delta$-isomorphic to $K_\infty$.
\end{theorem}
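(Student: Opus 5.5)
The statement to prove is Magid's Theorem~\ref{thm-K-inf}, transported from one derivation to a finite set $\Delta$ of commuting derivations. I would follow Magid's construction and check that each step uses only Picard-Vessiot theory for $\Delta$-fields (Kolchin), which the paper has recalled above.

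\emph{Step 1: the Picard-Vessiot closure $K_1$ of $K$.} We want a $\Delta$-field $K_1\supseteq K$ with three properties: $C(K_1,\Delta)=C$; $K_1$ is the union of its Picard-Vessiot subextensions; and every Picard-Vessiot extension of $K$ embeds in $K_1$ over $K$. To build it, take the direct limit over the directed set of integrable systems over $K$. Given two Picard-Vessiot extensions $E_1,E_2$, the compositum is realised as a Picard-Vessiot extension for the block-diagonal sum of the two systems. This gives compatible embeddings, and the uniqueness of Picard-Vessiot extensions (this uses that $C$ is algebraically closed) makes the transition maps well defined.

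A cleaner alternative is to take the colimit of the Picard-Vessiot rings; this is Magid's commutative Hopf algebra approach. Either way, constants do not grow, because each finite stage has constants $C$. The main obstacle is this compatibility in the directed system. Automorphisms make the choice of embeddings non-canonical, so I would handle it as Magid does. One option is a Zorn's-lemma argument on pairs (subextension, embedding). The other is to build $K_1$ as the fraction field of the union of Picard-Vessiot rings inside a simple $\Delta$-ring, namely a quotient of a big tensor product by a maximal $\Delta$-ideal. The constants of a simple $\Delta$-ring finitely generated over $K$ are algebraic over $C$, hence equal to $C$, and the direct limit inherits this.

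\emph{Step 2: iterate.} Set $K_0=K$ and $K_{i+1}=(K_i)_1$, and let $K_\infty=\bigcup_i K_i$. Property (i) holds since every $K_i$ has constants $C$. For (ii), an integrable system over $K_\infty$ has coefficients in some $K_i$. Its Picard-Vessiot extension over $K_i$ embeds in $K_{i+1}$, so a fundamental matrix exists in $K_\infty$.

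For (iii), suppose $K\subseteq E\subseteq K_\infty$ and $E$ is closed under solving integrable systems. Show by induction that $K_i\subseteq E$. Every element of $K_{i+1}$ lies in a Picard-Vessiot extension of $K_i\subseteq E$ for some integrable system with coefficients in $K_i$. That system has a fundamental matrix $Z$ in $E$. The matrix $Z$ and the fundamental matrix inside $K_{i+1}$ differ by a constant matrix in $\mathrm{GL}_m(C)$, because $C(K_\infty,\Delta)=C$. So the Picard-Vessiot extension, which is generated by its fundamental matrix, lies in $E$. Hence $E=K_\infty$.

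\emph{Step 3: uniqueness.} Let $M$ be another extension with properties (i)--(iii). By induction, embed each $K_i$ into $M$ over $K$, extending the previous embedding. This works because by (ii) $M$ contains solutions to every system over the image of $K_i$, with no new constants, so each Picard-Vessiot extension of $K_i$ maps into $M$. This gives an embedding $K_\infty\hookrightarrow M$ whose image is closed under solving integrable systems, so property (iii) for $M$ forces surjectivity. Throughout, the only ingredients beyond single-derivation arguments are the existence and uniqueness of Picard-Vessiot extensions for integrable systems and the Galois correspondence, which hold verbatim for commuting derivations.
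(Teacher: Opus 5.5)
Your route is the one the paper relies on: it cites Magid, whose construction iterates the Picard-Vessiot closure and sets $K_\infty=\bigcup_i K_i$. Your checks of (i)--(iii) in Step~2 are correct. Two steps, however, do not work as written.

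\textbf{The tensor-product option in Step~1.} Let $S$ be the quotient of an infinite tensor product of solution algebras $K[X_\alpha,1/\det X_\alpha]$ by an arbitrary maximal $\Delta$-ideal. Then $S$ is the direct limit of the images of the finite sub-products. These images need not be $\Delta$-simple, so the result on finitely generated simple $\Delta$-rings does not apply, and $S$ can in fact have new constants. For example, take $K=C(x)$ with $\delta=d/dx$. The systems $\delta y=0$ and $\delta y=y/(x-c)$, $c\in C$, contribute the $\Delta$-constants $X$ and $W_c:=X_c/(x-c)$. The $\Delta$-ideal generated by all $(X-c)W_c-1$ is proper: on these factors the quotient is $K\otimes_C C(X)$. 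Modulo any maximal $\Delta$-ideal containing it, $X$ is a constant with $X-c$ invertible for every $c\in C$, so $X\notin C$.

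The fix is to choose the ideal by Zorn's lemma among compatible $\Delta$-ideals of sub-products whose quotients have constants $C$. One can add one system at a time for the following reason. If $A$ is $\Delta$-simple with $C(A)=C$, then a maximal $\Delta$-ideal of $A[X,1/\det X]$ meets $A$ in $0$. Its quotient therefore embeds in a Picard-Vessiot ring over $\mathrm{Frac}(A)$, whose constants are $C$.

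\textbf{Step~3.} Here you skip the compatibility issue you yourself flagged in Step~1. Let $\phi_i\colon K_i\to M$ be the embedding already built. Knowing that each Picard-Vessiot extension of $K_i$ embeds into $M$ over $\phi_i$ does not give an embedding of $K_{i+1}$. That field is a union of infinitely many such extensions, and their embeddings are determined only up to Galois automorphisms.

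To close the gap, apply Zorn's lemma to pairs $(P,\psi)$, where $K_i\subseteq P\subseteq K_{i+1}$ is a $\Delta$-field and $\psi\colon P\to M$ extends $\phi_i$. Let $(P,\psi)$ be maximal, and suppose some Picard-Vessiot subextension $Q=K_i(Z_Q)$ of $K_{i+1}$ is not contained in $P$.
\begin{enumerate}[(a)]
\item $P(Z_Q)$ is a Picard-Vessiot extension of $P$, because $C(K_{i+1},\Delta)=C$.
\item By (i) and (ii) for $M$, there is $Z_M$ in $M$ such that $\psi(P)(Z_M)\subseteq M$ is a Picard-Vessiot extension of $\psi(P)$ for the transported system.
\item Uniqueness of Picard-Vessiot extensions over $P$ then extends $\psi$ to $P(Z_Q)$, contradicting maximality.
\end{enumerate}
Hence $P=K_{i+1}$. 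With this repair and the corrected Step~1, your final appeal to (iii) for $M$ correctly gives surjectivity, and the argument proves the theorem.
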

The $\Delta$-field extension $K_\infty$ is called a \emph{complete Picard-Vessiot closure} of $K$. We note that if $E$ is a $\Delta$-field with $K\subseteq E\subseteq K_\infty$,
then $E_\infty=K_\infty$ \cite[Corollary~1]{Mag22}. Let $E\subseteq K_\infty$ be a $\Delta$-field extension of $K$ which is not necessarily a finitely generated field extension of $K$. If every element of
$\mathscr G(K_\infty|K)$ restricts to an automorphism of $E$, then $E$ is
called a \emph{normal} extension of $K$. Any Picard-Vessiot extension of $K$
is a normal extension of $K$, and if $E$ is a normal extension of $K$, then
its algebraic closure $\overline{E}$ in $K_\infty$ is also a normal extension
of $K$. The next theorem generalizes \cite[Proposition~1]{Mag22} in its first part, while the second part follows from the fundamental theorem of the complete Picard–Vessiot closure.

\begin{theorem}\label{lemma-aut-extend}
\begin{enumerate}[(i)]
\item Let $L$ be a $\Delta$-subfield of $K_\infty$ containing $K$. Every
$K$-$\Delta$-embedding $\tau\colon L\to K_\infty$ extends to a
$K$-$\Delta$-automorphism of $K_\infty$.
\item If $E\subseteq K_\infty$ is a normal extension of $K$, then
$E^{\mathscr{G}(E|K)}=K$. In particular, if $L$ is a Picard-Vessiot
extension of $K$ and $\overline{L}$ is its algebraic closure in $K_\infty$,
then $\overline{L}^{\;\mathscr{G}(\overline{L}|K)}=K$.
\end{enumerate}
\end{theorem}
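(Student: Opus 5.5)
For part (i), I will follow Magid's argument for \cite[Proposition~1]{Mag22}, replacing the single derivation by the commuting family $\Delta$ throughout. Set $L_0:=L$ and $L_0':=\tau(L)$, so that $\tau\colon L_0\to L_0'$ is a $K$-$\Delta$-isomorphism of $\Delta$-subfields of $K_\infty$.

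Magid's description of $K_\infty$ as the union $\bigcup_i K_i$, with $K_i$ a Picard-Vessiot closure of $K_{i-1}$, gives an exhausting chain. I will run a Zorn's lemma argument on pairs $(M,\sigma)$, where $L\subseteq M\subseteq K_\infty$ is a $\Delta$-field and $\sigma\colon M\to K_\infty$ is a $K$-$\Delta$-embedding extending $\tau$. These pairs are ordered by extension, and chains have upper bounds given by unions.

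Let $(M,\sigma)$ be a maximal element. To show $M=K_\infty$, suppose not. By property (iii) of Theorem~\ref{thm-K-inf}, some integrable system over $M$ has no fundamental solution matrix in $M$, and by property (ii) it has one, say $Z$, in $K_\infty$. Then $M\langle Z\rangle$ is a Picard-Vessiot extension of $M$, since its constants are $C$ by property (i). Applying $\sigma$ to the coefficients gives an integrable system over $\sigma(M)$, which has a fundamental matrix $Z'$ in $K_\infty$. Uniqueness of Picard-Vessiot extensions up to isomorphism, valid because $C$ is algebraically closed, yields a $\Delta$-isomorphism $M\langle Z\rangle\to\sigma(M)\langle Z'\rangle$ extending $\sigma$. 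This contradicts maximality, so $M=K_\infty$.

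It remains to see that $\sigma$ is surjective. Its image $\sigma(K_\infty)$ is $\Delta$-isomorphic to $K_\infty$, so it satisfies property (ii). Since it sits between $K$ and $K_\infty$, property (iii) forces $\sigma(K_\infty)=K_\infty$. Thus $\sigma$ is the required automorphism.

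For part (ii), let $a\in E\setminus K$. It suffices to find $\sigma\in\mathscr G(K_\infty|K)$ with $\sigma(a)\neq a$, because normality of $E$ means $\sigma|_E\in\mathscr G(E|K)$. The fundamental theorem for $K_\infty$ gives $K_\infty^{\mathscr G(K_\infty|K)}=K$, and this is what I expect to be the main obstacle. My plan is to derive it from part (i) as follows.

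Since $a\in K_\infty$, it lies in a finite tower of Picard-Vessiot extensions $K=F_0\subseteq F_1\subseteq\cdots\subseteq F_s\ni a$, and I choose the smallest $t$ with $a\in F_t$. Then $t\ge1$, so $a\in F_t\setminus F_{t-1}$. The Galois correspondence for the Picard-Vessiot extension $F_t|F_{t-1}$ gives $F_t^{\mathscr G(F_t|F_{t-1})}=F_{t-1}$, so some $\rho\in\mathscr G(F_t|F_{t-1})$ satisfies $\rho(a)\ne a$. Regarding $\rho$ as a $K$-$\Delta$-embedding $F_t\to K_\infty$, part (i) extends it to some $\sigma\in\mathscr G(K_\infty|K)$. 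Then $\sigma(a)=\rho(a)\neq a$, as needed.

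The particular case follows because $\overline L$ is normal whenever $L$ is, as recalled above the statement. The delicate point is transporting Magid's single-derivation results, namely the tower description of $K_\infty$ and the uniqueness of Picard-Vessiot extensions, to the setting of several commuting derivations; I will take these as given by Kolchin's theory.
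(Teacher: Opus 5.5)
Your proof is correct, but it takes a more self-contained route than the paper. The paper's proof rests on two citations. For (i), it obtains a $K$-$\Delta$-embedding $\sigma\colon K_\infty\to K_\infty$ extending $\tau$ from the remark after \cite[Theorem~3]{Mag22}, using $L_\infty=K_\infty$, and then gets surjectivity from Theorem~\ref{thm-K-inf}(iii) exactly as you do. For (ii), it quotes $K_\infty^{\mathscr G(K_\infty|K)}=K$ from \cite[Theorem~5]{Mag22} and restricts to $E$ using normality.

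You instead prove the extension step by Zorn's lemma, using only properties (i)--(iii) of Theorem~\ref{thm-K-inf} and uniqueness of Picard--Vessiot extensions over fields with constants $C$. You then derive Magid's Theorem~5 from your part (i) and the Galois correspondence for the single extension $F_t|F_{t-1}$. Both steps check out: the adjoined fundamental matrix really enlarges $M$, and $\rho$ fixes $K\subseteq F_{t-1}$, so part (i) applies to it.

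What your route buys is that the only input from Magid's theory is the characterization in Theorem~\ref{thm-K-inf}. It therefore verifies, rather than asserts, that these parts of his theory survive the passage to several commuting derivations. The paper's argument is shorter but leans on that transfer.

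The one step you should spell out is that every $a\in K_\infty$ lies in a finite tower of Picard--Vessiot extensions starting at $K$. With Magid's iterated closures this needs an induction on the level that collects finitely many coefficients from $K_{i-1}$. It also follows directly from Theorem~\ref{thm-K-inf}. Let $T$ be the union of the top fields of all such towers in $K_\infty$. It is a $\Delta$-field, because towers concatenate: $F_sG_j=F_sG_{j-1}(Z_j)$ is Picard--Vessiot over $F_sG_{j-1}$. An integrable system over $T$ has its coefficients in some $F_s$, and its fundamental matrix $Z\in K_\infty$ makes $F_s(Z)$ Picard--Vessiot over $F_s$. So $T$ satisfies (ii), and $T=K_\infty$ by (iii).
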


\begin{proof}
(i) Since $L_\infty=K_\infty$, the remark following
\cite[Theorem~3]{Mag22} shows that $\tau$ extends to a
$K$-$\Delta$-embedding $\sigma\colon K_\infty\to K_\infty$. The image
$\sigma(K_\infty)$ is a $\Delta$-field between $K$ and $K_\infty$ in which every integrable system has a fundamental solution matrix, so
$\sigma(K_\infty)=K_\infty$ by Theorem~\ref{thm-K-inf}(iii).

(ii) By \cite[Theorem~5]{Mag22}, $K_\infty^{\mathscr{G}(K_\infty|K)}=K$.
Since $E$ is normal, every element of $\mathscr{G}(K_\infty|K)$ restricts to
an element of $\mathscr{G}(E|K)$, and therefore
$E^{\mathscr{G}(E|K)}\subseteq K_\infty^{\mathscr{G}(K_\infty|K)}=K$. The
last assertion follows because $L$, and hence $\overline{L}$, is a normal
extension of $K$.
\end{proof}

\subsection{Liouvillian extensions} \label{defn-LiouvExtn}
A $\Delta$-field extension $E$ of $K$ with $C(E,\Delta)=C$ is called a \emph{Liouvillian extension} if there exists a tower of $\Delta$-fields $$K = E_0 \subset E_1 \subset \cdots \subset E_{\ell} = E$$ such that for each $i=1,\dots, \ell$, $E_i = E_{i-1}(t_i)$ and one of the following holds:
\begin{enumerate}[(i)]
    \item $t_i$ is algebraic over $E_{i-1}$;
    \item $\delta(t_i) \in E_{i-1}$ for all $\delta \in \Delta$;
    \item $t_i \neq 0$ and $\delta(t_i)/t_i \in E_{i-1}$ for all $\delta \in \Delta$.
\end{enumerate}

Note that $E_i$ is a Picard-Vessiot extension of $E_{i-1}$ with a differential Galois group isomorphic to subgroups of $\mathbb{G}_a$ or $\mathbb G_m$, whenever (ii) or (iii) holds respectively.
A \emph{Liouvillian Picard-Vessiot extension} of $K$ is a Picard-Vessiot extension of $K$ that is also a Liouvillian extension.  Such extensions can be characterised in terms of their differential Galois groups.

\begin{theorem}{{\cite[Theorem~4]{Kol52}}}\label{Liou_PV}
A Picard-Vessiot extension $E$ of $K$ is Liouvillian if and only if the
identity component $\mathscr{G}(E|K)^0$ of its differential Galois group is
solvable.
\end{theorem}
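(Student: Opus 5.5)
We prove the two implications separately. For the forward direction we build, from a Liouvillian tower, a descending chain of subgroups of $\mathscr G(E|K)$ whose successive quotients are abelian or finite. For the converse we use the structure of connected solvable groups, together with Theorem~\ref{char-PV-ring}, to build a Liouvillian tower. Throughout, $G:=\mathscr G(E|K)$. We use that $E$ is a Picard-Vessiot extension of every intermediate $\Delta$-field $F$, with group $\mathscr G(E|F)$, so the Galois correspondence of \S\ref{galoiscorrespondence} applies over each such $F$.

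\emph{($\Rightarrow$).} Let $K=E_0\subset\cdots\subset E_\ell=E$ be a Liouvillian tower with $E_i=E_{i-1}(t_i)$, and set $H_i:=\mathscr G(E|E_i)$.

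For $\sigma\in H_{i-1}$, the element $\sigma(t_i)$ is constrained by the type of step $i$:
\begin{enumerate}[(a)]
\item if $t_i$ is an integral, then $\delta(\sigma t_i-t_i)=0$ for all $\delta\in\Delta$, so $\sigma(t_i)=t_i+c_\sigma$ with $c_\sigma\in C$;
\item if $t_i$ is an exponential, the same computation gives $\sigma(t_i)=c_\sigma t_i$ with $c_\sigma\in C^\times$;
\item if $t_i$ is algebraic, $\sigma(t_i)$ is one of the finitely many conjugates of $t_i$ over $E_{i-1}$.
\end{enumerate}
In cases (a) and (b), $E_i$ is stable under $H_{i-1}$, and $\sigma\mapsto c_\sigma$ is a homomorphism to $\mathbb G_a$ or $\mathbb G_m$ with kernel $H_i$. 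Hence $H_i\trianglelefteq H_{i-1}$ with abelian quotient. In case (c), $H_i$ has finite index in $H_{i-1}$.

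Now set $N_i:=H_i^0$. By induction, assume $G^0\supseteq N_0\supseteq\cdots$ is built so far.
\begin{itemize}
\item In case (c), $N_{i-1}\cap H_i$ has finite index in the connected group $N_{i-1}$. Hence $N_{i-1}\subseteq H_i$, and so $N_{i-1}=N_i$.
\item In cases (a) and (b), $N_{i-1}/(N_{i-1}\cap H_i)$ embeds in an abelian group. Hence the derived group $[N_{i-1},N_{i-1}]$ lies in $H_i$; being connected, it lies in $N_i$.
\end{itemize}
Since $H_\ell=1$, iterating shows that a term of the derived series of $G^0$ is trivial. Therefore $G^0$ is solvable.

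\emph{($\Leftarrow$).} Let $L:=E^{G^0}$. This is a finite Galois extension of $K$, so $L=K(a)$ for a primitive element $a$, which is an algebraic step. It remains to show that $E$ is Liouvillian over $L$, where $\mathscr G(E|L)=G^0$ is connected and solvable.

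A connected solvable linear algebraic group admits a chain of closed connected subgroups $G^0=H_0\supset H_1\supset\cdots\supset H_s=1$ with $H_i\trianglelefteq H_{i-1}$ and $H_{i-1}/H_i\cong\mathbb G_a$ or $\mathbb G_m$. This follows from the structure theory, e.g.\ via Lie-Kolchin and a composition series of the unipotent radical together with a maximal torus. Put $F_i:=E^{H_i}$. Then each $F_i$ is a Picard-Vessiot extension of $F_{i-1}$ with group $\mathbb G_a$ or $\mathbb G_m$. So we are reduced to the one-dimensional case: a Picard-Vessiot extension $E'\supset F$ with group $\mathscr H\cong\mathbb G_a$ or $\mathbb G_m$.

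Let $R'$ be its Picard-Vessiot ring. By Theorem~\ref{char-PV-ring}, every $f\in R'$ lies in a finite-dimensional $\mathscr H$-stable $C$-space $V_f$. Since $R'\neq F$ and $R'^{\mathscr H}=F$, some $V_f$ carries a nontrivial action.
\begin{itemize}
\item For $\mathscr H=\mathbb G_m$, $V_f$ splits into weight spaces. Choose a nonzero $t$ with $\sigma t=\chi(\sigma)t$ for a nontrivial character $\chi$.
\item For $\mathscr H=\mathbb G_a$, the action on $V_f$ is unipotent. A two-dimensional stable piece gives a $t$ with $\sigma t=t+c(\sigma)$, where $c$ is a nonzero additive character.
\end{itemize}
In both cases $\sigma(\delta t/t)=\delta t/t$, respectively $\sigma(\delta t)=\delta t$, for every $\sigma\in\mathscr H$. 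Hence these elements lie in $E'^{\mathscr H}=F$, so $t$ is an exponential, respectively an integral, over $F$.

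Finally, $\mathscr G(E'|F(t))$ equals $\ker\chi$, respectively $\ker c$. In the $\mathbb G_a$ case $c(x)=\alpha x$ with $\alpha\ne0$, so the kernel is trivial. In the $\mathbb G_m$ case we must choose $\chi$ injective, i.e.\ $\chi(x)=x^{\pm1}$. By Galois correspondence a trivial kernel gives $F(t)=E'$, and since there are no new constants the tower is Liouvillian.

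The step I expect to be the main obstacle is guaranteeing an injective weight $\chi(x)=x^{\pm1}$ in the $\mathbb G_m$ case. If only a weight $x^d$ occurs, then $E'$ is merely finite over $F(t)$. That is still acceptable: we adjoin an algebraic step afterwards, which is allowed in a Liouvillian tower.
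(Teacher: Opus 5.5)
Your proof is correct, and it does more than the paper does: the paper gives no proof of this statement but quotes \cite[Theorem~4]{Kol52}. Later, in Proposition~\ref{lemma-holo-liouv}, it uses the stronger form \cite[Theorem~1.43]{vdPS03}, where $E$ is only assumed to lie in some Liouvillian extension.

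\textbf{Forward direction.} This is the classical argument. You pass to the chain $H_i=\mathscr G(E|E_i)$ and use two facts: a closed subgroup of finite index in a connected group is the whole group, and the derived group of a connected group is closed and connected. It needs only abstract homomorphisms $H_{i-1}\to\mathbb G_a$ or $\mathbb G_m$.

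\textbf{Converse.} The textbook route triangularizes $G^0$ on the solution space by Lie--Kolchin and reads exponentials and integrals off a triangular fundamental matrix. You instead refine $G^0$ by a normal series with one-dimensional quotients, and in each step extract a semi-invariant from a finite orbit span in the Picard-Vessiot ring via Theorem~\ref{char-PV-ring}. That is the same device the paper uses in proving Theorem~\ref{main-thm-A} and Corollary~\ref{thm-in-Liouv-PV}. Your route costs the structure theory of connected solvable groups, including the classification of one-dimensional connected groups. In return it is self-contained within the paper's toolkit, and the steps of the tower match the composition factors of $G^0$.

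\textbf{Rationality needs to be stated.} In the converse you use that $\mathscr H$ acts \emph{rationally} on $V_f$. This is what gives the weight decomposition for $\mathbb G_m$, the unipotence for $\mathbb G_a$, and $c(x)=\alpha x$. All three can fail for abstract representations of $C^\times$ or $(C,+)$. Rationality holds because $\sigma(Z)=ZC_\sigma$, so $\sigma$ acts on $R'=F[Z,1/\det Z]$ by polynomials in the entries of $C_\sigma$ and $1/\det C_\sigma$. Say this explicitly.

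\textbf{The $\mathbb G_a$ step.} The fixed subspace of $V_f$ may have dimension greater than one, so the ``two-dimensional stable piece'' also relies on rationality. Write $\sigma_x=\exp(xN)$ on $V_f$ and pick $v$ with $Nv\neq0$ and $N^2v=0$. Then $w:=Nv$ is a nonzero element of $V_f^{\mathscr H}\subseteq F$, and $t:=v/w$ satisfies $\sigma_x(t)=t+x$.

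\textbf{The step you flagged as the main obstacle.} This is harmless. Your fallback of adding an algebraic step when $\ker\chi=\mu_{|d|}$ is valid. You can also avoid it. The weights occurring in the domain $R'$ form a monoid, and their gcd is $1$ because $\mathscr H$ acts faithfully on $E'=\mathrm{Frac}(R')$. So some product of weight vectors and their inverses is an element of $E'$ of weight $1$.
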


Let $K\subseteq E\subseteq K_\infty$ be an intermediate $\Delta$-field, 
and let $L$ be a Picard-Vessiot extension of $E$ for an integrable 
system over $E$. By Theorem~\ref{thm-K-inf}, this system admits a 
fundamental solution matrix $Z\in\operatorname{GL}_m(K_\infty)$. The 
field $E(Z)$, generated over $E$ by the entries of $Z$, is a 
Picard-Vessiot extension of $E$ contained in $K_\infty$; it is 
therefore $E$-$\Delta$-isomorphic to $L$. Consequently, every 
Picard-Vessiot extension of $E$ embeds over $E$ into $K_\infty$. 
This allows us to conclude that any tower of $\Delta$-fields 
$K=M_0\subset M_1\subset\cdots\subset M_\ell$, where each $M_i$ 
is a Picard-Vessiot extension of $M_{i-1}$, can be embedded in $K_\infty$.

Since any finite algebraic extension embeds into a finite Galois extension, 
a Liouvillian extension of $K$ can always be embedded into a tower 
$$K = F_0 \subset F_1 \subset \cdots \subset F_p$$
of $\Delta$-fields where each $F_i\supseteq F_{i-1}$ is a Liouvillian 
Picard-Vessiot extension. It follows that every Liouvillian extension 
of $K$ likewise embeds into $K_\infty$. Accordingly, throughout the 
remainder of this paper, we shall assume that all Picard-Vessiot 
extensions and Liouvillian extensions of $K$ are $\Delta$-subfields of~$K_\infty$.

%%%%%%%%%%%%%%%%%%%%%%%%%%%%%%%%%%%%%%%%%%%%%%%%%%%%%%%%%%%%%%%%%%%%%%%%%%%%%
	\section{First integrals in Liouvillian Picard-Vessiot extensions}\label{sec-in-ring}
	%%%%%%%%%%%%%%%%%%%%%%%%%%%%%%%%%%%%%%%%%%%%%%%%%%%%%%%%%%%%%%%%%%%%%%%%%%%%%

In this section,  $k$, $C$, $K$, $\Delta$ and $D$ are as defined in Section \ref {intro}.  If \eqref{system-fode} has a first integral in $K$, then Theorems~\ref{main-thm-A} and~\ref{main-thm-B} hold trivially. Therefore, we assume that
$C(K,D)=C$, that is, the system \eqref{system-fode} has no rational first integrals; in particular, $f_i \neq 0$ for each $i=1,2,\dots, n$.

Let $E$ be a $\Delta$-field extension of $K$. Then $E$ is also a
$D$-field extension of $K$, and every $K$-$\Delta$-automorphism of $E$
is a $K$-$D$-automorphism. The field $C(E,D)$ contains $C(E,\Delta)$ and may be strictly larger. The elements of $C(E,D)\setminus C(E,\Delta)$  are precisely the first integrals of \eqref{system-fode} in $E$. Since any derivation on $K$ extends uniquely to an algebraic extension $\widetilde K$, the field $\widetilde K$ inherits a unique structure as both a $\Delta$-field  and a $D$-field. 

\begin{lemma}\label{lemma-setup}
Let $K\subseteq F\subset K_\infty$ be $\Delta$-fields, $E$ be a
Picard-Vessiot extension of $F$, $R\subset E$ be its Picard-Vessiot ring and
$\mathscr{G}:=\mathscr{G}(E|F)$. Then, for any $e\in C(E,D)$, there exist a
nonzero $w\in R$ and $\alpha\in F$ such that $we\in R$ and $Dw=\alpha w$.
\end{lemma}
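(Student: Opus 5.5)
The plan is to work with the ideal of denominators of $e$ in $R$ and then produce a $D$-eigenvector in it using a Galois-conjugate determinant.

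\textbf{Step 1: the ideal of denominators.} Since $K\subseteq F$ and $R$ is $\Delta$-stable, $D=\sum f_i\delta_i$ (with $f_i\in K\subseteq F$) restricts to a derivation of $R$. Set
\[
I:=\{w\in R : we\in R\}.
\]
This is an ideal of $R$. It is nonzero because $E$ is the field of fractions of $R$. If $w\in I$, then $D(we)=(Dw)e+w\,De=(Dw)e$, and the left side lies in $R$. Hence $Dw\in I$, so $I$ is a $D$-stable $F$-subspace of $R$.

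\textbf{Step 2: a finite-dimensional $D$-stable piece.} Fix $0\neq w_0\in I$. By Theorem~\ref{char-PV-ring}(iii), $w_0$ lies in a finite-dimensional $\Delta$-stable $F$-subspace $V\subseteq R$. Because the $f_i$ lie in $F$, $V$ is also $D$-stable. Then $W:=V\cap I$ is a nonzero, finite-dimensional, $D$-stable $F$-subspace of $I$. Choose an $F$-basis $w_1,\dots,w_m$ of $W$ and write $D\mathbf w=A\mathbf w$ with $A\in \mathrm M_m(F)$, where $\mathbf w=(w_1,\dots,w_m)^{\mathsf t}$.

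\textbf{Step 3: a determinant eigenvector.} Elements of $\mathscr G$ are $F$-$\Delta$-automorphisms, so they commute with $D$ and fix $A$. I would first show that the vectors $(\sigma w_1,\dots,\sigma w_m)$, $\sigma\in\mathscr G$, span $E^m$ over $E$. Suppose not. Then there is a nonzero $E$-linear relation $\sum_i c_i\sigma(w_i)=0$ valid for all $\sigma$. Take such a relation with the minimal number of nonzero $c_i$ and normalize one coefficient to $1$. Applying $\tau\in\mathscr G$ and subtracting gives a shorter relation, so minimality forces $\tau(c_i)=c_i$ for all $\tau$. Since $E^{\mathscr G}=F$, all $c_i\in F$. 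Taking $\sigma=\mathrm{id}$ then contradicts the $F$-independence of the $w_i$.

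Consequently there are $\sigma_1=\mathrm{id},\sigma_2,\dots,\sigma_m\in\mathscr G$ such that
\[
w:=\det\bigl(\sigma_j(w_i)\bigr)_{i,j}\neq 0 .
\]
Each column $\sigma_j(\mathbf w)$ satisfies $D\sigma_j(\mathbf w)=A\,\sigma_j(\mathbf w)$, so the usual Liouville formula gives $Dw=\mathrm{tr}(A)\,w$. Put $\alpha:=\mathrm{tr}A\in F$. Also $w\in R$, because $\mathscr G$ preserves $R$ (Theorem~\ref{char-PV-ring}(ii)).

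\textbf{Step 4: $we\in R$.} Expand $w$ along the first column, which is the $\sigma_1=\mathrm{id}$ column. Every term contains a factor $w_i\in I$ multiplied by elements of $R$. Hence $we\in R$.

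The main obstacle is Step 3: producing an actual eigenvector inside $I$, rather than merely a $D$-stable subspace. The Artin-type spanning argument, with the identity included among the chosen automorphisms, is the delicate point. It relies on $E^{\mathscr G}=F$ from the Galois correspondence.
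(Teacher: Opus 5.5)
Your proof is correct, but it takes a different route from the paper's.

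\textbf{The paper's route.} It takes the $\mathscr{G}$-orbit of a fixed $v\in I$, which spans a finite-dimensional $C$-space by Theorem~\ref{char-PV-ring}(ii), and passes to its $C(E,D)$-span $V$. It then sets $w:=\mathrm{wr}_D(v_1,\dots,v_m)$ with $v_1=v$.
\begin{itemize}
\item $w\neq0$ is automatic from linear independence over $C(E,D)$.
\item $w\in I$ because the first column $(v,Dv,\dots,D^{m-1}v)^{\mathsf t}$ lies in the $D$-ideal $I$.
\item The real work is showing $\alpha=-a_{m-1}\in F$. This is done by proving that the coefficients of the monic operator with solution space $V$ are $\mathscr{G}$-invariant.
\end{itemize}

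\textbf{Your route.} You swap which step is automatic and which is hard.
\begin{itemize}
\item You first cut down to a finite-dimensional $D$-stable $F$-subspace $W=V\cap I$ of $I$.
\item Liouville's formula then gives $\alpha=\mathrm{tr}A\in F$ for free.
\item The real work moves to showing $\det(\sigma_j(w_i))\neq0$, which your Artin-type minimal-relation argument establishes correctly.
\end{itemize}

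\textbf{Comparison.} Both arguments rest on the same two inputs: $E^{\mathscr{G}}=F$ and Theorem~\ref{char-PV-ring}. Yours stays entirely over $F$. It avoids working over the field $C(E,D)$ of $D$-constants (which exceeds $C$ exactly when first integrals exist in $E$), avoids choosing a $C(E,D)$-basis inside $R$, and identifies $\alpha$ explicitly as the trace of the matrix of $D$ on $W$. The paper's argument avoids the independence-of-automorphisms lemma and builds $w$ from the orbit of a single element.

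Your choice $\sigma_1=\mathrm{id}$ is exactly what makes $we\in R$ work, since one column must have entries in $I$. It is legitimate because the nonzero vector $\mathbf w$ can be extended to a basis of $E^m$ drawn from the spanning set $\{\sigma(\mathbf w):\sigma\in\mathscr{G}\}$.
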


\begin{proof}
Since $E$ is the field of fractions of $R$, the set
\begin{equation*}
	I:=\{a\in R : ae\in R\}
\end{equation*}
is a nonzero ideal of $R$. As $R$ is a $D$-ring, we have
$D(ae)=(Da)e\in R$ for $a\in I$. Hence $I$ is a $D$-ideal.

Fix a nonzero $v\in I$. By Theorem~\ref{char-PV-ring}, the orbit
$\{\sigma(v):\sigma\in\mathscr{G}\}$ spans a finite-dimensional
$C$-subspace of $R$. Let
$$V=\mathrm{Span}_{C(E,D)}\{\sigma(v):\sigma\in\mathscr{G}\}.$$
Every $\sigma\in\mathscr{G}$ commutes with $D$ and hence maps $C(E,D)$ onto
itself, so $V$ is a finite-dimensional $\mathscr{G}$-stable
$C(E,D)$-subspace of $E$.

We may choose a $C(E,D)$-basis $\{v_1,\dots,v_m\}$ of $V$ such that
$v_1,\dots,v_m\in R$ and $v_1=v$. Let $w:=\mathrm{wr}_D(v_1,\dots,v_m)$ be
the Wronskian of $v_1,\dots, v_m$ with respect to the derivation $D$, that
is, $w=\det\big(D^{i-1}v_j\big)$. Then $w\in R$ and since $v_1,\dots,v_m$
are linearly independent over the field $C(E,D)$, we have $w\neq0$. Observe
that $V$ is the space of solutions in $E$ of $\mathcal{L}(Y)=0$, where
$\mathcal{L}$ is the monic operator
\begin{equation*}
	\mathcal{L}(Y):=\frac{\mathrm{wr}_D(v_1,\dots,v_m,Y)}{w}
	=D^mY+a_{m-1}D^{m-1}Y+\cdots+a_0Y,\quad a_0,\dots,a_{m-1}\in E,
\end{equation*}
and it is well known that $D(w)=-a_{m-1}w$. We shall now show that
$a_{m-1}\in F$. Since $V$ is $\mathscr{G}$-stable, the elements of $V$ are
also solutions of the operator
$$\mathcal L_{\sigma}(Y):=\left(\sigma(a_{m-1})-a_{m-1}\right)D^{m-1}Y+\cdots
+\left(\sigma(a_0)-a_0\right)Y$$
for each $\sigma\in \mathscr{G}$. Since the order of $\mathcal L_\sigma$ is
at most $m-1$ and $V$ is of dimension $m$ over $C(E,D)$, we must have
$\mathcal L_\sigma=0$. That is,
$a_0,\dots,a_{m-1}\in E^{\mathscr{G}}=F$. Finally, expanding
$\det(D^{i-1}v_j)$ along the first column
$(v,Dv,\dots,D^{m-1}v)^{\mathsf t}$, whose entries lie in the $D$-ideal
$I$, shows $w\in I$. Hence $we\in R$, and $Dw=\alpha w$ with
$\alpha:=-a_{m-1}\in F$.
\end{proof}

\subsection{Proof of Theorem \ref{main-thm-A} and some consequences} 
We are now in a position to prove Theorem \ref{main-thm-A}. Let $K\subseteq F\subset E\subset K_\infty$ be $\Delta$-fields, $E$ be a
Liouvillian Picard-Vessiot extension of $F$ with Picard-Vessiot ring $R$
over $F$ and $\mathscr{G}:=\mathscr{G}(E|F)$. We will prove that
$C(E,D)\neq C$ implies $C(R,D)\neq C$ and from this the proof of
Theorem~\ref{main-thm-A} follows by taking $F=K$.

If $C(F,D)\neq C$ there is nothing to prove, so assume $C(F,D)=C$ and let
$e\in C(E,D)\setminus C$. Then Lemma~\ref{lemma-setup} provides
$w\in R\setminus\{0\}$ and $\alpha\in F$ with $we\in R$ and $Dw=\alpha w$.
From the Galois correspondence, $E^{\mathscr{G}^0}$ is a $\Delta$-field
which is also a finite (Galois) extension of $F$. In particular,
$E^{\mathscr{G}^0}$ is a finite-dimensional $\Delta$-stable $F$-subspace of
$E$ and therefore $E^{\mathscr{G}^0}\subset R$ by
Theorem~\ref{char-PV-ring}.

Let $V=\mathrm{Span}_C\{\sigma(w) : \sigma\in \mathscr{G}^0\}$. Since
$w\in R$, Theorem~\ref{char-PV-ring} shows that $V$ is a finite-dimensional
$\mathscr{G}^0$-stable $C$-subspace of $R$. Note that
$D(\sigma(w))=\sigma(D(w))=\alpha \sigma(w)$ and therefore
$\sigma(w)/w\in C(E,D)$ for every $\sigma\in\mathscr{G}^0$. Thus, every
element of $V$ has the form $cw$ with $c\in C(E,D)$.

By Theorem~\ref{Liou_PV}, $\mathscr{G}^0$ is a connected solvable algebraic
group, so by the Lie-Kolchin theorem, there exists a $C$-basis
$e_1w,\dots, e_mw$ of $V$, with $e_1,\dots,e_m\in C(E,D)$, such that
\begin{equation}\label{solvable-group-action}
\sigma(e_iw)=\sum^i_{j=1}c_{ij\sigma}e_jw\, ,\quad (c_{ij\sigma})\in
\mathrm{GL}_m(C)\ \text{and } c_{ij\sigma}=0\ \text{for all } i<j.
\end{equation}
for each $\sigma\in \mathscr{G}^0.$ Now we divide the proof into two cases.

\emph{Case} 1: Let $\mathrm{dim}_CV=1$. Then $V$ is spanned by $w$ and we
obtain $\sigma w=c_\sigma w$, where $c_\sigma\in C$, for all
$\sigma\in\mathscr{G}^0$. Therefore, for each $\delta\in\Delta$, the element
$\delta (w)/w$ is fixed by $\mathscr{G}^0$ and we obtain
$\delta(w^{-1})=\beta w^{-1}$ with
$\beta:=-\delta(w)/w\in E^{\mathscr{G}^0}$. Hence, the $F$-space
$W:=E^{\mathscr G^0}+ E^{\mathscr G^0}w^{-1}$, containing
$E^{\mathscr G^0}$ and $w^{-1}$, is a finite-dimensional $\Delta$-stable
$F$-vector space. By Theorem~\ref{char-PV-ring}, $w^{-1}\in R$. Since
$we\in R$, we obtain $e=w^{-1}we\in R$. Thus, in this case,
$C(R,D)\neq C$.

\emph{Case} 2: Let $\mathrm{dim}_CV\geq 2$. We have
$\sigma(e_1w)=c_{11\sigma} e_1w$ and
$\sigma(e_2w)=c_{21\sigma} e_1w+c_{22\sigma}e_2w$. Therefore
$$\sigma\left (\frac{e_2}{e_1}\right)=\frac{c_{22\sigma}}{c_{11\sigma}}
\frac{e_2}{e_1}+\frac{c_{21\sigma}}{c_{11\sigma}}$$
and this implies that the $C$-vector space $\mathrm{Span}_C\{1,e_2/e_1\}$
is $\mathscr{G}^0$-stable. Since $\mathscr{G}^0$ has finite index in
$\mathscr{G}$, the $\mathscr{G}$-orbit of $e_2/e_1$ also spans a
finite-dimensional $C$-space, and therefore, again by
Theorem~\ref{char-PV-ring}, the $D$-constant $e_2/e_1$ belongs to $R$.
Furthermore, since $e_1w$ and $e_2w$ are $C$-linearly independent,
$e_2/e_1\notin C$. Thus, we obtain $C(R,D)\neq C$ in this case as well. This completes the proof of Theorem \ref{main-thm-A}.\qed
    
	We now use the solvability of the identity component of the Galois group once again to describe a Liouvillian first integral in the Picard-Vessiot ring of a Liouvillian Picard-Vessiot extension.

	\begin{corollary}\label{thm-in-Liouv-PV}
Let $K\subseteq F\subset E\subset K_\infty$ be $\Delta$-fields with
$C(F,D)=C$, where $E$ is a Liouvillian Picard-Vessiot extension of $F$ with
Picard-Vessiot ring $R$, and let $\mathscr U$ be the unipotent radical of
$\mathscr{G}:=\mathscr{G}(E|F)$. Suppose that \eqref{system-fode} admits a first integral in $E$. Then there is a first integral $e\in R$ such that,
for all $\delta_i,\delta_j\in\Delta$ with $\delta_j(e)\neq 0$,
$$\frac{\delta_i(e)}{\delta_j(e)}\in E^{\mathscr{G}^0}\quad \text{and}\quad
\frac{\delta_i(\delta_j(e))}{\delta_j(e)}\in E^{\mathscr{G}^0}.$$
Furthermore, if \eqref{system-fode} admits a first integral in
$E^{\mathscr U}$, then $e$ can be chosen in $E^{\mathscr U}\cap R$ with
$\delta_i(e)/e\in E^{\mathscr G^0}$ for all $\delta_i\in \Delta$.
\end{corollary}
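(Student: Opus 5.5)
The plan is to extract from the proof of Theorem~\ref{main-thm-A} a first integral that is a semi-invariant of $\mathscr G^0$, up to an additive constant. Both displayed conditions then follow because all the quotients in question become $\mathscr G^0$-invariant.

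\emph{Step 1: a semi-invariant first integral in $R$.} The argument of Theorem~\ref{main-thm-A} with this $F$ and $E$ gives $C(R,D)\neq C$. Fix $f\in C(R,D)\setminus C$. By Theorem~\ref{char-PV-ring}, $W:=\mathrm{Span}_C(\{1\}\cup\{\sigma(f):\sigma\in\mathscr G^0\})$ is finite-dimensional. Each $\sigma\in\mathscr G^0$ commutes with $D$, so $W\subseteq C(R,D)$, and $W$ is $\mathscr G^0$-stable.

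Since $W\neq C$, the quotient $W/C\cdot 1$ is a nonzero $\mathscr G^0$-module. By Theorem~\ref{Liou_PV} and the Lie-Kolchin theorem it contains a common eigenvector. Lifting it gives $e\in W\setminus C$ with
\[
\sigma(e)=\chi_\sigma e+c_\sigma,\qquad \chi_\sigma\in C^\times,\ c_\sigma\in C,
\]
for all $\sigma\in\mathscr G^0$. Then $e\in R$ and $De=0$. Also $e\notin C=C(E,\Delta)$, so some $\delta_i e\neq0$ and $e$ is a first integral.

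Because $\sigma$ commutes with every $\delta\in\Delta$, we get $\sigma(\delta_i e)=\chi_\sigma\,\delta_i e$ and $\sigma(\delta_i\delta_j e)=\chi_\sigma\,\delta_i\delta_j e$. Hence whenever $\delta_j e\neq0$, both $\delta_i e/\delta_j e$ and $\delta_i(\delta_j e)/\delta_j e$ are fixed by every $\sigma\in\mathscr G^0$. By the Galois correspondence they lie in $E^{\mathscr G^0}$, which proves the first assertion.

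\emph{Step 2: the case of a first integral in $E^{\mathscr U}$.} Since $\mathscr U$ is normal in $\mathscr G$, $E^{\mathscr U}$ is a Picard-Vessiot extension of $F$ with group $\mathscr G/\mathscr U$. It is Liouvillian, because $(\mathscr G/\mathscr U)^0$ is a quotient of the solvable group $\mathscr G^0$, and $(\mathscr G/\mathscr U)^0$ is a torus. Its Picard-Vessiot ring $R_{\mathscr U}$ is generated by a fundamental matrix with entries in $E^{\mathscr U}$. I will use that $R_{\mathscr U}\subseteq R\cap E^{\mathscr U}$, via Theorem~\ref{char-PV-ring}(iii): a finite-dimensional $\Delta$-stable $F$-subspace of $E^{\mathscr U}$ is one in $E$.

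Applying Theorem~\ref{main-thm-A} to $E^{\mathscr U}/F$ gives $f\in C(R_{\mathscr U},D)\setminus C$. The finite-dimensional span of its $\mathscr G^0$-orbit lies in $C(R_{\mathscr U},D)$. The group $\mathscr G^0$ acts on this span through its image in the torus $(\mathscr G/\mathscr U)^0$, so the action is diagonalizable. Writing $f$ as a sum of weight vectors, each component is a $C$-linear combination of the $\sigma(f)$, hence lies in $C(R_{\mathscr U},D)$.

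Not all components can lie in $C$, since $f\notin C$. So some weight vector $e$ is a nonconstant $D$-constant with $\sigma(e)=\chi_\sigma e$ for all $\sigma\in\mathscr G^0$. Then $\sigma(\delta_i e)=\chi_\sigma\delta_i e$, so $\delta_i e/e\in E^{\mathscr G^0}$. The two quotient conditions follow as well: $\delta_i e/\delta_j e=(\delta_i e/e)/(\delta_j e/e)$, and $\delta_i(\delta_j e)/\delta_j e=\delta_i(b)/b+\delta_i e/e$ with $b:=\delta_j e/e\in E^{\mathscr G^0}$, which is $\Delta$-stable. This gives $e\in E^{\mathscr U}\cap R$ with the required properties.

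\emph{Main obstacle.} I expect the care to be needed in Step 2: the inclusion $R_{\mathscr U}\subseteq R$, and the fact that the weight components remain non-constant $D$-constants. The latter reduces to $C(E^{\mathscr U},D)\cap C(E,\Delta)=C$ together with $f\notin C$.
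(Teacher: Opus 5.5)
Your proof is correct. For the first assertion it takes a different, more uniform route than the paper; for the ``furthermore'' part it matches the paper.

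The paper splits according to whether $E^{\mathscr U}$ contains a first integral. If it does, the torus $\mathscr G^0/\mathscr U$ diagonalizes the $\mathscr G^0$-orbit span, and a semi-invariant first integral yields both assertions at once. This is exactly your Step~2. If $C(E^{\mathscr U},D)=C$, the paper takes a Lie-Kolchin flag $e_1,\dots,e_m$ of the orbit span of $\tilde e$. It notes that the character $\sigma\mapsto c_{11\sigma}$ is trivial on the unipotent group $\mathscr U$, so $e_1\in C(E^{\mathscr U},D)=C$. It then uses $e_2/e_1$, for which $\sigma(e_2/e_1)=c_{22\sigma}e_2/e_1+c_{21\sigma}$.

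You instead adjoin $1$ to the orbit span and apply Lie-Kolchin to $W/C\cdot 1$. This gives such an affine semi-invariant directly, in all cases and without any reference to $\mathscr U$.

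Your version shows that the first assertion uses only the connected solvability of $\mathscr G^0$; the unipotent radical enters solely in the logarithmic-derivative refinement. The paper's version produces, in each case, a single element satisfying everything claimed in that case. In yours, when $E^{\mathscr U}$ contains a first integral, the Step~2 element supersedes the Step~1 element, which is harmless since the statement only says $e$ can be so chosen.

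Your derivation of $R_{\mathscr U}\subseteq R\cap E^{\mathscr U}$ from Theorem~\ref{char-PV-ring}(iii) also fills in a point the paper only recalls.
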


\begin{proof}
Recall that $E^{\mathscr U}$ is a Liouvillian Picard-Vessiot extension of
$F$ with Picard-Vessiot ring $E^{\mathscr U}\cap R$, and that it is
$\mathscr{G}$-stable.

\emph{Case} 1: Suppose that \eqref{system-fode} admits a first integral in
$E^{\mathscr U}$. By the proof of Theorem~\ref{main-thm-A}, there is
$\tilde{e}\in C(E^{\mathscr U}\cap R,D)\setminus C$. Let
$V=\mathrm{Span}_C\{\sigma(\tilde{e}) : \sigma\in \mathscr{G}^0\}$, which is
finite-dimensional by Theorem~\ref{char-PV-ring}. Then $V\subset E^{\mathscr U}\cap R$. Since $E^{\mathscr U}$ is a
Picard-Vessiot extension of $E^{\mathscr{G}^0}$ whose Galois group
$\mathscr{G}^0/\mathscr U$ is a torus, $V$ has a $C$-basis
$\{e_1,\dots,e_m\}$ with $\sigma(e_j)=c_{j\sigma}e_j$, $c_{j\sigma}\in C$,
for all $\sigma\in\mathscr{G}^0$. Since $\tilde e\in V\setminus C$, there is
a $j$ with $e_j\notin C$. Take $e:=e_j$. Then
$a_i:=\delta_i(e)/e\in E^{\mathscr{G}^0}$ for all $\delta_i\in \Delta$, and
whenever $\delta_j(e)\neq 0$ we have $a_j\neq 0$,
$\delta_i(e)/\delta_j(e)=a_i/a_j\in E^{\mathscr{G}^0}$ and
$\delta_i(\delta_j(e))/\delta_j(e)=\delta_i(a_j)/a_j+a_i\in E^{\mathscr{G}^0}$.

\emph{Case} 2: $C(E^{\mathscr U},D)=C$. By the proof of
Theorem~\ref{main-thm-A}, there is $\tilde{e}\in C(R,D)\setminus C$. Let
$V=\mathrm{Span}_C\{\sigma(\tilde{e}) : \sigma\in \mathscr{G}^0\}
\subseteq C(R,D)$, which is finite-dimensional by
Theorem~\ref{char-PV-ring}. By the Lie-Kolchin theorem there is a $C$-basis
$\{e_1,\dots,e_m\}$ of $V$ such that
\begin{equation}\label{triangular-action}
\sigma(e_i)=\sum^i_{j=1}c_{ij\sigma}e_j,\quad (c_{ij\sigma})\in
\mathrm{GL}_m(C)\ \text{and } c_{ij\sigma}=0\ \text{for all } i<j.
\end{equation}
for each $\sigma\in \mathscr{G}^0.$ The map $\sigma\mapsto c_{11\sigma}$ is a character of $\mathscr{G}^0$.
Since there are no nontrivial morphisms from the unipotent group
$\mathscr U$ to $\mathbb{G}_m$, its restriction to $\mathscr U$ is trivial.
Thus, $e_1\in E^{\mathscr U}$, and so $e_1\in C(E^{\mathscr U},D)=C$; in
particular $c_{11\sigma}=1$ for all $\sigma\in\mathscr{G}^0$. Since
$\tilde{e}\in V\setminus C$, we have $m\geq 2$, and by $C$-linear
independence $e:=e_2/e_1\in R$ is not in $C$. For any
$\sigma\in \mathscr{G}^0$ we have $\sigma(e)=c_{22\sigma}e+c_{21\sigma}$,
and hence $\sigma(\delta(e))=c_{22\sigma}\delta(e)$ for all
$\delta\in \Delta$. Therefore, for all $\delta_i,\delta_j\in\Delta$ with
$\delta_j(e)\neq 0$,
$$\frac{\delta_i(e)}{\delta_j(e)}\in E^{\mathscr{G}^0}\quad\text{and}\quad
\frac{\delta_i(\delta_j(e))}{\delta_j(e)}\in E^{\mathscr{G}^0}.$$
\end{proof}

We now give two examples of planar systems illustrating the situations
described in Corollary~\ref{thm-in-Liouv-PV}. In both examples,
$k=\mathbb{C}$, $K=\mathbb{C}(y_1,y_2)$ and $F=K$. Since
$\delta_1e/\delta_2e=-f_2/f_1\in K$ for any first integral $e$, the first
condition of Corollary~\ref{thm-in-Liouv-PV} holds trivially.
\begin{example}\label{example-dichotomy}
	 \emph{(a)} Fix
	$\mu\in\mathbb{C}\setminus\mathbb{Q}$ and consider the system \begin{equation*}
		\begin{cases}
				y_1' = y_1\\
				y_2' = \mu y_2.
		\end{cases}
	\end{equation*} 
	Here $D=y_1\delta_1+\mu y_2\delta_2$. Let $E$ be a Picard-Vessiot extension of $K$ for the integrable system 
	$\delta_1Y=(-\mu/y_1)Y$, $\delta_2Y=0$ and $u\in E$ be a nonzero solution of the system. Then, $E=K(u)$ and $E$ is a Liouvillian
	Picard-Vessiot extension of $K$ with Picard-Vessiot ring
	$R=K[u,u^{-1}]$.  Note that $e:=y_2u$ lies in $R$, $De=0$
	and
	$$
	\frac{\delta_1e}{e}=-\frac{\mu}{y_1}\in K, \qquad
	\frac{\delta_2e}{e}=\frac{1}{y_2}\in K.
	$$

\emph{(b)} Consider the system
\begin{equation*}
	\begin{cases}
		y_1' = 1\\
		y_2' = 2y_1y_2+1.
	\end{cases}
\end{equation*}
Here $D=\delta_1+(2y_1y_2+1)\delta_2$. Let $E$ be the Picard-Vessiot
extension of $K$ for the integrable system
$$\delta_1Y
=\begin{pmatrix} 0 & 1\\ 0 & -2y_1\end{pmatrix}
Y,
\qquad
\delta_2Y=0 ,
$$
with Picard-Vessiot ring $R$. Since $(1,0)^{\mathsf t}$ is a solution, we
may choose a fundamental solution matrix of the form
$Z=\left(\begin{smallmatrix} t & 1\\ z & 0\end{smallmatrix}\right)$ with
$z\neq0$, so that\footnote{The elements $z$ and $t$ can be thought of as
$\exp(-y^2_1)$ and $\int \exp(-y^2_1)\,dy_1$.}
$$\delta_1(t)=z,\quad \delta_1(z)=-2y_1z\quad \text{and}\quad
\delta_2(t)=\delta_2(z)=0.$$
Clearly, $E=K(z,t)$ and it is therefore a Liouvillian Picard-Vessiot
extension of $K$. Let $e:=y_2z-t\in R$ and observe that
$\delta_1e=-(2y_1y_2+1)z$ and $\delta_2e=z$. Therefore $De=0$ and $e$ is
not a $\Delta$-constant. Moreover,
$$
\frac{\delta_1\delta_2e}{\delta_2e}=\frac{\delta_1z}{z}=-2y_1\in K,
\qquad
\frac{\delta_2\delta_2e}{\delta_2e}=0, \qquad
\frac{\delta_1e}{\delta_2e}=-(2y_1y_2+1)\in K ,
$$
and the same holds with $\delta_1e$ in place of $\delta_2e$. Note that the
element $\delta_2e/e=z/(y_2z-t)$ is transcendental over $K$, hence in
particular, does not lie in a finite algebraic extension of $K$. The tower
of $\Delta$-fields $K\subset K(z)\subset K(z,t)$ is exactly the extension
described after Singer's theorem in Section~\ref{intro}: an exponential of
an integral followed by an integral. In the notation of Singer's theorem,
the integrating factor is $\lambda=-z=-\exp(-y_1^{2})$, with $U=-2y_1$ and
$V=0$.
\end{example}

\section{First integrals in Liouvillian extensions}\label{sec-liouv-first-int}

In the next three propositions, $K$ denotes an arbitrary $\Delta$-field with
$\Delta=\{\delta_1,\dots,\delta_r\}$ whose field of constants $C$ is
algebraically closed, as in Section~\ref{prelim}. The first two propositions partly generalise Theorem~\ref{char-PV-ring}.

\begin{proposition} \label{remark-orbit-general} Let $L$ be a $\Delta$-field extension of $K$, which is possibly infinitely generated as a field over $K$, with $C(L,\Delta)=C$. Suppose that $L^{\mathscr G(L|K)}=K$. If $L$ contains  a finite-dimensional $\mathscr G(L|K)$-stable $C$-subspace $V$ with a $C$-basis $\{f_1,\dots,f_s\}$, then there are operators $\theta_1,\dots,\theta_s\in \Theta$ and an integrable system $$\delta_i(Y)=A_iY,\quad A_i\in \mathrm{M}_s(K),\qquad i=1,\dots, r$$ having $(\theta_if_j)\in \mathrm{GL}_s(L)$ as a fundamental solution matrix. In particular, there is a Picard-Vessiot ring $R$ over $K$ such that $V\subset R\subset L$.
\end{proposition}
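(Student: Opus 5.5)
This is the standard Wronskian argument, generalised to several derivations. Since $f_1,\dots,f_s$ are $C$-linearly independent and $C(L,\Delta)=C$, Kolchin's generalised Wronskian criterion gives operators $\theta_1=1,\theta_2,\dots,\theta_s\in\Theta$ with $\operatorname{ord}\theta_i\le i-1$ such that $W:=(\theta_i f_j)_{i,j}$ is invertible. Pass to the $L$-linear system
\[
\mathcal S:\quad \delta(Y)=A_\delta Y \quad (\delta\in\Delta),\qquad A_\delta:=\delta(W)W^{-1}.
\]
Here $\delta(W)$ means the matrix $(\delta\theta_i f_j)$. By construction $W$ is a fundamental solution matrix of $\mathcal S$. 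The system is integrable, because the derivations commute and $A_\delta$ is a logarithmic derivative. All that remains is to show $A_\delta\in \mathrm{M}_s(K)$.

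Let $\sigma\in\mathscr G(L|K)$. Since $V$ is $\mathscr G(L|K)$-stable, $\sigma(f_j)=\sum_k c_{kj}f_k$ for some matrix $C_\sigma=(c_{kj})\in\mathrm{GL}_s(C)$. The matrix is invertible because $\sigma$ is injective and $V$ is finite-dimensional. Since $\sigma$ commutes with every $\theta\in\Theta$, we get $\sigma(W)=WC_\sigma$ and $\sigma(\delta W)=(\delta W)C_\sigma$. Therefore
\[
\sigma(A_\delta)=(\delta W)C_\sigma C_\sigma^{-1}W^{-1}=A_\delta.
\]
So every entry of $A_\delta$ lies in $L^{\mathscr G(L|K)}=K$. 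This is the key use of the hypothesis.

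For the final assertion, let $E:=K(\theta_if_j)\subseteq L$. Then $C(E,\Delta)\subseteq C(L,\Delta)=C$, and $E$ is generated over $K$ by the entries of the fundamental matrix $W$. Hence $E$ is a Picard-Vessiot extension of $K$ for $\mathcal S$. Its Picard-Vessiot ring $R=K[W,1/\det W]$ contains the first row $f_1,\dots,f_s$, since $\theta_1=1$. Thus $V\subset R\subset L$.

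The main obstacle is the first step: producing $\theta_i$ with $\det(\theta_if_j)\neq0$ from linear independence over constants when there are several derivations. I would cite Kolchin's theorem on generalised Wronskians (Kolchin, Chapter II, Theorem 1). Alternatively, I would argue inductively. Choose $\theta_1,\dots,\theta_k$ maximal so that the rows $(\theta_i f_1,\dots,\theta_if_s)$, for $\theta_i$ of order $\le$ some bound, span the $L$-row space of all $(\theta f_j)_j$, $\theta\in\Theta$. If this rank $k$ were $<s$, there would be a nonzero $L$-vector $c$ annihilating all these rows. After normalising one entry of $c$ to $1$, differentiating and using minimality shows $c$ has constant entries, which contradicts $C$-independence.
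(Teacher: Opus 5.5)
Your proof is correct and is essentially the paper's argument. Kolchin's generalised Wronskian gives an invertible $W=(\theta_if_j)$, and the $\mathscr G(L|K)$-invariance of $\delta(W)W^{-1}$ (coming from $\sigma(W)=WC_\sigma$), together with $L^{\mathscr G(L|K)}=K$, puts the coefficient matrices in $\mathrm{M}_s(K)$.

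The only difference is how you obtain $V\subset R$. You get it directly from $\theta_1=1$. This is supplied by the order bound $\operatorname{ord}\theta_i\le i-1$ in Kolchin's theorem, or in your inductive variant by extending the nonzero row $(f_1,\dots,f_s)$ to a basis of the row space. The paper does not normalise $\theta_1$; instead it uses Cramer's rule with the Galois-invariant Wronskian ratios $a_i(\theta)\in K$ to show that each $f_j$ is a $K$-combination of the $\theta_if_j$.
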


\begin{proof} For $\eta_1,\dots,\eta_s\in\Theta$, put
$\mathrm{wr}_{\eta_1,\dots,\eta_s}(f_1,\dots,f_s):=\det(\eta_if_j)$. Since $V$ is $\mathscr G(L|K)$-stable, for each $\sigma\in \mathscr G(L|K)$, there is a matrix $C_\sigma:=(c_{ij\sigma})\in \mathrm{GL}_s(C)$ such that \begin{equation}\label{G-actiononsolution}\sigma(f_i)=\sum^s_{j=1}c_{ji\sigma}f_j.\end{equation}

Since every element of $\mathscr G(L|K)$ commutes with every $\eta\in\Theta$, we have
$$\sigma(\mathrm{wr}_{\eta_1,\dots,\eta_s}(f_1,\dots,f_s))=\det(C_\sigma)\mathrm{wr}_{\eta_1,\dots,\eta_s}(f_1,\dots,f_s).$$ From \cite[Chapter~II, \S1, Theorem~1]{Kol73}, there are $\theta_1,\dots,\theta_s\in\Theta$
with $\Lambda:=\mathrm{wr}_{\theta_1,\dots,\theta_s}(f_1,\dots,f_s)\neq0$.
Therefore, for any $\theta\in\Theta$ and $1\leq i\leq s$, the quotient
\begin{equation*}
	a_i(\theta):=
	\frac{\mathrm{wr}_{\theta_1,\dots,\theta_{i-1},\theta,\theta_{i+1},\dots,\theta_s}(f_1,\dots,f_s)}{\Lambda}
\end{equation*}
is fixed by each element of $\mathscr{G}(L|K)$. Thus, $a_i(\theta)\in L^{\mathscr{G}(L|K)}=K$. Now, by Cramer's
rule, $\theta f_j=\sum_{i=1}^s a_i(\theta)\,\theta_if_j$ for all $j$ and we obtain
\begin{equation}\label{finthering}V\subset \mathrm{Span}_K\{\theta f_j:\theta\in\Theta,\ j\in\{1,2,\dots,s\}\}\subseteq \mathrm{Span}_K\left\{\theta_i f_j: i,j\in \{1,2,\dots,s\}\right\}.\end{equation}

Let $Z:=(\theta_if_j)$. Since $\Lambda\neq 0$, we see that $Z\in \mathrm{GL}_s(L)$. From \eqref{G-actiononsolution}, for any $\sigma\in \mathscr G(L|K)$ and for any $\delta\in \Delta$,  we have $\sigma(Z)=ZC_\sigma$ and $\sigma(\delta(Z))=\delta(\sigma(Z))=(\delta Z) C_\sigma$. Therefore, $\sigma\left(\delta(Z)Z^{-1}\right)=\delta(Z)Z^{-1}$ for all $\sigma\in \mathscr G(L|K)$. Thus, $\delta_i(Z)=A_i Z$ for some $A_i\in \mathrm M_s(K)$ for $i=1,\dots, r$. Since entries of $Z$ are in the $\Delta$-field $L$, we must have $\delta_i\delta_j(Z)=\delta_j\delta_i(Z)$. From this, it follows that the system  $$\delta_i(Y)=A_i Y, \qquad i=1,\dots, r$$ is integrable. Since $C(L,\Delta)=C$, the ring $R:=K[Z,\Lambda^{-1}]$ is a Picard-Vessiot
ring over $K$ for this system. Clearly $R\subset L$. Finally, from the definition of $Z$ and from \eqref{finthering}, we obtain that $V\subset R$. \end{proof}

\begin{proposition}\label{rem-fd-span-PVring}
 Let $L$ be a $\Delta$-field extension of $K$ with $C(L,\Delta)=C$. If $W\subset L$ is a finite-dimensional $\Delta$-stable
$K$-subspace of $L$, then $W$ is contained in a Picard-Vessiot ring
over $K$.
\end{proposition}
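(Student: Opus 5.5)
Pick a $K$-basis $w_1,\dots,w_s$ of $W$ and write, for each $i=1,\dots,r$,
\[
\delta_i(w_1,\dots,w_s)^{\mathsf t}=B_i\,(w_1,\dots,w_s)^{\mathsf t},\qquad B_i\in\mathrm{M}_s(K).
\]
The entries of $B_i$ lie in $K$ because $W$ is $\Delta$-stable. Set $w:=(w_1,\dots,w_s)^{\mathsf t}$, so that $\delta_i w=B_i w$ for all $i$. Since the $\delta_i$ commute on $L$, we get $(\delta_jB_i-\delta_iB_j+B_iB_j-B_jB_i)w=0$. This alone does not give integrability, because the $w_k$ are independent over $K$ but the coefficients here lie in $K$. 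In fact this suffices: the coefficient matrix has entries in $K$ and annihilates a $K$-linearly independent vector, so each row is zero. Hence the system $\delta_iY=B_iY$ is integrable over $K$, and $w$ is a solution of it with entries in $L$.

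We now produce a Picard-Vessiot ring containing $w$. Take a Picard-Vessiot extension $M$ of the $\Delta$-field $L$ for the same system, with fundamental matrix $Z\in\mathrm{GL}_s(M)$. It exists since $C(L,\Delta)=C$ is algebraically closed, and it satisfies $C(M,\Delta)=C$. Then $Z^{-1}w$ is a vector of $\Delta$-constants, since $\delta_i(Z^{-1}w)=-Z^{-1}B_iw+Z^{-1}B_iw=0$. So $w=Zc$ with $c\in C^s$.

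Let $E:=K(Z)\subseteq M$. Because $C(M,\Delta)=C$, this $E$ is a Picard-Vessiot extension of $K$, and its Picard-Vessiot ring $R=K[Z,1/\det Z]$ contains $w=Zc$. Thus $W=\mathrm{Span}_K\{w_1,\dots,w_s\}\subseteq R$, though only inside $M$, not inside $L$.

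The main obstacle is to make $R$ sit inside $L$, presumably as the statement intends. To get this, replace the construction by Proposition~\ref{remark-orbit-general}-style reasoning directly in $L$. Apply \cite[Chapter~II, \S1, Theorem~1]{Kol73} to a maximal $C$-linearly independent family of solutions inside $L$. Alternatively, use the dual (adjoint) system: let $V\subseteq L^s$ be the $C$-space of solutions of $\delta_iY=B_iY$ in $L$, which has dimension $\le s$. Then pass to the quotient system on $K$-span considerations, choosing $\theta_1,\dots,\theta_t\in\Theta$ with $\det(\theta_a w_b)\neq0$ for a $K$-basis.

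Concretely, I would run Kolchin's Wronskian criterion on $w_1,\dots,w_s$, which are linearly independent over $C$. This gives $\theta_1,\dots,\theta_s$ with $\Lambda=\det(\theta_aw_b)\neq0$. Since each $\theta_a w_b$ lies in $W$, Cramer's rule as in the proof of Proposition~\ref{remark-orbit-general} expresses $\theta w_b$ in terms of the $\theta_aw_b$, now with coefficients computed from $B_i$, hence in $K$. This yields an integrable system over $K$ with fundamental matrix $(\theta_aw_b)\in\mathrm{GL}_s(L)$. Then $K[(\theta_aw_b),\Lambda^{-1}]\subset L$ is a Picard-Vessiot ring containing each $w_b=\theta_{\mathrm{id}}w_b$ after including the identity operator. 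The delicate point is ensuring the coefficients lie in $K$ without a Galois group. I would handle it by noting that $\delta(\theta_aw_b)=\sum_c(\cdot)$ with row data from the $B_i$, which is independent of $b$.
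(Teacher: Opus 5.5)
Your first three paragraphs are the paper's proof, step for step. You take a $K$-basis, obtain $\delta_iw=B_iw$ with $B_i\in\mathrm{M}_s(K)$, and get integrability from the $K$-linear independence of the $w_k$. You then pass to a Picard-Vessiot extension of $L$, write $w=Zc$ with $c\in C^s$, and take $R=K[Z,1/\det Z]$. That already proves the statement, which asks only for \emph{some} Picard-Vessiot ring over $K$ containing $W$. The paper's $R$ also lives in a Picard-Vessiot extension of $L$, not in $L$ itself. Where the result is used, in Proposition~\ref{lemma-holo-liouv}, one has $L=K_\infty$, so $R\subseteq L$ is automatic there. Do tidy your first paragraph: it says both that $K$-linear independence does not give integrability and that it does, and only the second is right.

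Everything from ``The main obstacle'' on should be deleted, because the goal you set there is false. Take $K=\mathbb{C}(y_1)$ with $\Delta=\{\delta_1\}$, and let $M$ be the Airy Picard-Vessiot extension of $K$ from Section~\ref{subsec-counter}. Put $L:=K(u_1,\delta_1u_1)\subseteq M$ and $W:=Ku_1+K\delta_1u_1$; then $W$ is $\Delta$-stable since $\delta_1^2u_1=y_1u_1$. A Picard-Vessiot ring over $K$ inside $L$ containing $u_1$ would have fraction field $L$, since that fraction field is a $\Delta$-field containing $u_1$ and $\delta_1u_1$. But $\mathscr G(M|L)$ is the group of upper unitriangular matrices, which is not normal in $\mathrm{SL}_2(\mathbb{C})$, so $L$ is not Picard-Vessiot over $K$.

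Your argument breaks at the Cramer step. With $\theta_1=1$, $\theta_2=\delta_1$ you get $Z=\left(\begin{smallmatrix}u_1&\delta_1u_1\\ \delta_1u_1&y_1u_1\end{smallmatrix}\right)$, whose determinant is nonzero. The second row $(y_1u_1,\ u_1+y_1\delta_1u_1)$ of $\delta_1Z$ is not a $K$-combination of the rows of $Z$: comparing first entries forces the coefficients $(y_1,0)$, and then the second entries differ by $u_1$. The underlying issue is the kind of relation the $B_i$ supply. They give $\theta w_b=\sum_c(P_\theta)_{bc}w_c$, with coefficients depending on $b$, whereas $\delta_iZ=A_iZ$ needs coefficients independent of $b$. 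So your closing remark that the row data are independent of $b$ is exactly what fails. Cramer's rule does produce $b$-independent coefficients, but only in $L$. In Proposition~\ref{remark-orbit-general} they land in $K$ solely because of $\mathscr G(L|K)$-invariance together with $L^{\mathscr G(L|K)}=K$, and you have neither here.
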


\begin{proof}
Let $w_1,\dots,w_s$ be a $K$-basis of $W$ and put
$w:=(w_1,\dots,w_s)^{\mathsf t}$. Since $W$ is $\Delta$-stable, for each $i\in\{1,\dots,r\}$ we have
$\delta_i w=A_i w$ with $A_i\in\mathrm{M}_s(K)$, and since
$\delta_i\delta_jw=\delta_j\delta_iw$ for all $i,j\in\{1,\dots,r\}$, we obtain that $(\delta_iA_j-\delta_jA_i-A_iA_j+A_jA_i)w=0$ as well. Now we use the fact that $w_1,\dots, w_s$ are $K$-linearly independent, to conclude that $\delta_iA_j-\delta_jA_i-A_iA_j+A_jA_i=0$, that is, the system $\delta_i Y=A_i Y, i=1,\dots, r$ is integrable. Let $E$ be a Picard-Vessiot extension of $L$ for the system and  $Z\in\operatorname{GL}_s(E)$ be a fundamental solution matrix for the system. Let $R$ be the $K$-algebra generated by the entries of $Z$ and by $1/\mathrm{det} Z$. Then, since $C(E,\Delta)=C$, $R$ is a Picard-Vessiot ring over $K$ for the system. Observe that $\delta(Z^{-1}w)=0$ for all $\delta\in\Delta$ and therefore $w=Zx$ for some $x\in C^s$. Since the entries of $Z$ belong to $R$, we obtain that $w_1,\dots,w_s\in R$ and consequently,  $W\subset R$.
\end{proof}

\begin{proposition}\label{lemma-holo-liouv}
Let $M$ be a Liouvillian extension of $K$. If $f\in M$ lies in a
finite-dimensional $\Delta$-stable $K$-subspace of $K_\infty$, then $f$ lies
in a Liouvillian Picard-Vessiot extension of $K$.
\end{proposition}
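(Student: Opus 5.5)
The plan is to replace $f$ by the finite collection of its Galois conjugates, build a Picard-Vessiot extension of $K$ generated by these conjugates, and then show that this extension is Liouvillian because it sits inside a compositum of finitely many Liouvillian extensions.

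\emph{Step 1: a Picard-Vessiot ring containing $f$.} Let $W\subset K_\infty$ be a finite-dimensional $\Delta$-stable $K$-subspace containing $f$. Proposition~\ref{rem-fd-span-PVring} (applied with $L=K_\infty$) places $W$ in a Picard-Vessiot ring $R_0\subset K_\infty$ over $K$. Let $E_0$ be its fraction field and $\mathscr G_0:=\mathscr G(E_0|K)$. By Theorem~\ref{char-PV-ring}, the space $V:=\mathrm{Span}_C\{\sigma(f):\sigma\in\mathscr G_0\}$ is finite-dimensional and $\mathscr G_0$-stable. Pick a $C$-basis $f_1=\sigma_1(f),\dots,f_s=\sigma_s(f)$ of $V$. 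Proposition~\ref{remark-orbit-general} (with $L=E_0$, using $E_0^{\mathscr G_0}=K$) gives operators $\theta_1,\dots,\theta_s\in\Theta$ such that $Z=(\theta_if_j)$ is a fundamental matrix of an integrable system over $K$. Hence $E:=K(Z)\subseteq E_0$ is a Picard-Vessiot extension of $K$ that contains $f$ (since $V\subset K[Z,\Lambda^{-1}]$) and is generated over $K$ by the $\theta_i\sigma_j(f)$.

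\emph{Step 2: $E$ lies in a Liouvillian extension.} By Theorem~\ref{lemma-aut-extend}(i), each $\sigma_j$ extends to a $K$-$\Delta$-automorphism $\tilde\sigma_j$ of $K_\infty$. Then $\tilde\sigma_j(M)\subset K_\infty$ is a Liouvillian extension of $K$, built by the images of the original tower generators, which are of the same three types. Let $N\subset K_\infty$ be the compositum of $\tilde\sigma_1(M),\dots,\tilde\sigma_s(M)$. Adjoin the generators of the successive towers one after another. Each generator remains algebraic, an integral, or an exponential of an integral over the larger field, and $C(N,\Delta)\subseteq C(K_\infty,\Delta)=C$. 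So $N$ is a Liouvillian extension of $K$. Since $\sigma_j(f)=\tilde\sigma_j(f)\in N$ and $N$ is a $\Delta$-field, all $\theta_i\sigma_j(f)$ lie in $N$, hence $E\subseteq N$.

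\emph{Step 3 (main obstacle): a Picard-Vessiot extension inside a Liouvillian extension is Liouvillian.} By Theorem~\ref{Liou_PV}, it suffices to show that $\mathscr G(E|K)^0$ is solvable. I would refine $N$ into a tower $K=F_0\subset F_1\subset\cdots\subset F_p$ of $\Delta$-fields in $K_\infty$, each a Liouvillian Picard-Vessiot extension of the previous one (Section~\ref{defn-LiouvExtn}), with $E\subseteq F_p$. Put $E_i:=E F_i$, which is a Picard-Vessiot extension of $F_i$ for the same system, and $H_i:=\mathscr G(E_i|F_i)$. Restriction to $E$ identifies $H_i$ with a closed subgroup of $\mathscr G(E|K)$; this gives a descending chain $\mathscr G(E|K)=H_0\supseteq H_1\supseteq\cdots\supseteq H_p=1$, the last equality because $E_p=F_p$.

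The key claim is that $H_i$ is normal in $H_{i-1}$ with $H_{i-1}/H_i$ having solvable identity component. For this I would consider the intermediate field $E_{i-1}\cap F_i$. By the Galois correspondence in $E_{i-1}/F_{i-1}$, it is the fixed field of the subgroup $H_i$. It is also a $\Delta$-subfield of the Liouvillian Picard-Vessiot extension $F_i/F_{i-1}$, hence a Picard-Vessiot extension of $F_{i-1}$ whose group is a quotient of $\mathscr G(F_i|F_{i-1})$. Thus $H_i\trianglelefteq H_{i-1}$ and $H_{i-1}/H_i$ is a quotient of a group with solvable identity component.

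The delicate point is justifying $E_i\cong E_{i-1}\otimes_{E_{i-1}\cap F_i}F_i$ with $\mathscr G(E_i|F_i)\cong\mathscr G(E_{i-1}|E_{i-1}\cap F_i)$. For this I would use that $E_{i-1}$ and $F_i$ are both Picard-Vessiot over $F_{i-1}$, so their compositum is Picard-Vessiot for the direct sum of the two systems. Its group is the fibre product over the group of the intersection, from which the identification follows.

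Given the chain, $\mathscr G(E|K)^0$ has a subnormal series whose factors have solvable identity components. Intersecting this series with $\mathscr G(E|K)^0$ and using connectedness, $\mathscr G(E|K)^0$ is solvable. Therefore $E$ is a Liouvillian Picard-Vessiot extension of $K$ containing $f$.
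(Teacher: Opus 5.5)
Your Steps 1 and 2 coincide with the paper's proof. Proposition~\ref{rem-fd-span-PVring} gives a Picard-Vessiot ring containing $f$, and Theorem~\ref{char-PV-ring} makes $V_f$ finite-dimensional. Proposition~\ref{remark-orbit-general} then makes $K\langle V_f\rangle$ (your $K(Z)$) a Picard-Vessiot extension, and extending the $\sigma_k$ to $K_\infty$ places it inside the Liouvillian compositum $N$ of the $\widetilde{\sigma}_k(M)$.

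The routes differ only in Step 3. The paper finishes in one line by citing \cite[Theorem~1.43]{vdPS03}: a Picard-Vessiot extension contained in a Liouvillian extension has solvable $\mathscr G^0$. You instead reprove that implication, using the chain $H_i=\mathscr G(EF_i|F_i)$ and Theorem~\ref{Liou_PV} for each step $F_i|F_{i-1}$. This is essentially the standard proof of the cited result, run along a tower of Liouvillian Picard-Vessiot steps. It is longer, but it is done directly for commuting derivations, whereas the reference is written for one derivation and used mutatis mutandis.

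Two points in Step 3 need fixing.

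First, $E_{i-1}\cap F_i$ is not Picard-Vessiot over $F_{i-1}$ merely because it is an intermediate $\Delta$-field of $F_i|F_{i-1}$. An intermediate field is Picard-Vessiot only when its subgroup is normal, so your ``hence'' does not follow. The right reason is that it is an intersection of two Picard-Vessiot extensions of $F_{i-1}$:
\begin{itemize}
\item By Theorem~\ref{lemma-aut-extend}(i), applied with $F_{i-1}$ in place of $K$, every $\tau\in\mathscr G(F_i|F_{i-1})$ extends to an automorphism of $K_\infty$.
\item That automorphism sends $Z$ to $Zc$ with $c\in\mathrm{GL}_s(C)$, so it maps $E_{i-1}=F_{i-1}(Z)$ onto itself.
\item Hence $\tau$ stabilises $E_{i-1}\cap F_i$, so $\mathscr G(F_i|E_{i-1}\cap F_i)$ is normal in $\mathscr G(F_i|F_{i-1})$.
\end{itemize}
Alternatively, cite \cite[Corollary~2, p.~402]{Kol73}, as the paper does in Section~\ref{subsec-airy}.

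Second, your ``delicate point'' is unnecessary, and $E_i\cong E_{i-1}\otimes_{E_{i-1}\cap F_i}F_i$ is not literally correct, since a tensor product of field extensions is generally not a field. The groups $H_i\subseteq H_{i-1}$ are both closed subgroups of $\mathrm{GL}_s(C)$ via the same $Z$. The fixed field of $H_i$ in $E_{i-1}$ is $E_{i-1}\cap E_i^{H_i}=E_{i-1}\cap F_i$. So the Galois correspondence already gives $H_i=\mathscr G(E_{i-1}|E_{i-1}\cap F_i)$.

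Finally, your closing solvability claim is cleanest as a downward induction showing each $H_i^0$ is solvable. The only subtle point is that $(H_{i-1}^0\cap H_i)/H_i^0$ is a finite normal subgroup of the connected group $H_{i-1}^0/H_i^0$, hence central.
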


\begin{proof}
By Proposition~\ref{rem-fd-span-PVring}, applied with $K_\infty$ in place of
$L$, there is a Picard-Vessiot extension $L$ of $K$ whose Picard-Vessiot
ring contains $f$. By Theorem~\ref{char-PV-ring},
\begin{equation*}
    V_{f}:=\operatorname{Span}_{C}\{\sigma(f):\sigma\in\mathscr{G}(L|K)\}
\end{equation*}
is a finite-dimensional $C$-vector space. Let $K\langle V_{f}\rangle$ denote
the $\Delta$-field generated over $K$ by $V_f$. Since $V_f$ is
$\mathscr{G}(L|K)$-stable, $K\langle V_{f}\rangle$ is a Picard-Vessiot
extension of $K$ by Proposition~\ref{remark-orbit-general}. Note that
$f\in K\langle V_{f}\rangle$. Thus, we may replace $L$ with
$K\langle V_{f}\rangle$. Let $\sigma_{1}(f),\dots,\sigma_{s}(f)$ be a
$C$-basis of $V_{f}$, with $\sigma_k\in\mathscr{G}(L|K)$. By
Theorem~\ref{lemma-aut-extend}(i), each $\sigma_k$ extends to a
$K$-$\Delta$-automorphism $\widetilde{\sigma}_k$ of $K_\infty$. Since
$\widetilde{\sigma}_k$ fixes $K$, each $\widetilde{\sigma}_k(M)$ is readily
seen to be a Liouvillian extension of $K$. Then, the compositum
$N:=\widetilde{\sigma}_{1}(M)\cdots\widetilde{\sigma}_{s}(M)\subseteq
K_\infty$ is also a Liouvillian extension of $K$ and it also contains $V_f$.
This implies $L\subseteq N$. Now from \cite[Theorem~1.43]{vdPS03}, it
follows that $L$ is a Liouvillian Picard-Vessiot extension of $K$.
\end{proof}

For the rest of this section,  $k$, $C$, $K$, $\Delta$ and $D$ are as defined in Section~\ref{intro}. As in Section \ref{sec-in-ring}, to avoid triviality, we shall also assume $C(K,D)=C$.

\subsection{Proof of Theorem \ref{main-thm-B}}
 Recall that a Liouvillian first integral of \eqref{system-fode}, by definition, lies in a Liouvillian extension of $K$, which in turn can be embedded in a tower of Liouvillian Picard-Vessiot
extensions over $K$. An induction on the length of this tower will be used in the proof of Theorem \ref{main-thm-B}. The base case follows from Corollary~\ref{thm-in-Liouv-PV}, and the
inductive step is presented in Lemmas~\ref{lemma-stepA} and~\ref{lemma-trade}. These two lemmas in turn rely on the following simple yet important observation.

\begin{proposition}\label{lemma-gradient}
	Let $E$ be a $\Delta$-field extension of $K$ and $e\in C(E,D)$ be a
	first integral of \eqref{system-fode}. Then
	$(\delta_0e,\dots,\delta_ne)^{\mathsf t}$ is a solution of the system
	\begin{equation}\label{equ-jac-D}
		DY=-(\delta_if_j)_{0\leq i,j\leq n}\,Y
	\end{equation}
	over the $D$-field $K$.
\end{proposition}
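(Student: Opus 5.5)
We apply $\delta_j$ to the identity $De=0$ and commute $\delta_j$ past $D$. Write $D=\sum_{i=0}^n f_i\delta_i$ with $f_0=1$, and extend it to $E$ by the same formula. The derivations $\delta_0,\dots,\delta_n$ commute on $E$, so for each $0\le j\le n$ we get the operator identity
\begin{equation*}
\delta_j\circ D=\sum_{i=0}^n \bigl(\delta_j f_i\bigr)\delta_i+\sum_{i=0}^n f_i\,\delta_i\delta_j = \sum_{i=0}^n \bigl(\delta_j f_i\bigr)\delta_i + D\circ\delta_j
\end{equation*}
on $E$. This is just the Leibniz rule for $\delta_j(f_i\,\delta_i e)$ followed by swapping $\delta_j\delta_i=\delta_i\delta_j$. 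Here $\delta_j f_0=0$, since $f_0=1$.

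Next we evaluate this identity at the first integral $e$. Since $De=0$, the left side $\delta_j(De)$ vanishes, which gives
\begin{equation*}
D(\delta_j e) = -\sum_{i=0}^n (\delta_j f_i)\,\delta_i e,\qquad 0\le j\le n.
\end{equation*}
In vector form, with $Y:=(\delta_0e,\dots,\delta_ne)^{\mathsf t}$, this reads $DY=-JY$, where $J$ is the matrix whose $(j,i)$ entry is $\delta_j f_i$.

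It remains to check the indexing against \eqref{equ-jac-D}: the matrix $(\delta_if_j)_{0\le i,j\le n}$ has row index $i$ carrying the derivation and column index $j$ carrying the function, which matches $J$ after renaming. All entries $\delta_if_j$ lie in $K$, because $K$ is $\Delta$-stable. Hence the system has coefficients in the $D$-field $K$, and $Y$ is a solution of it with entries in $E$.

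In the autonomous case $\delta_0$ is omitted, and the same computation works with all indices running from $1$ to $n$. The only point requiring care is the index convention, that is, confirming that the $(i,j)$ entry is $\delta_i f_j$, which is the transpose of the Jacobian as the paper remarks. No step is a real obstacle.
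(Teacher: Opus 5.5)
Your proof is correct and is essentially the paper's argument: you apply $\delta_i$ to $De=0$, expand by the Leibniz rule, and use commutativity of the derivations in $\Delta$ to recognize the second sum as $D(\delta_i e)$. Your index bookkeeping also agrees with the paper's matrix $(\delta_i f_j)_{0\le i,j\le n}$.
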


\begin{proof}
	Since $De=\sum_{j=0}^{n}f_j\delta_je=0$, applying $\delta_i$ and using
	the fact that the derivations in $\Delta$ commute with one another, we get
	\[
	0=\delta_i(De)=\sum_{j=0}^{n}\delta_i(f_j)\,\delta_je
	+\sum_{j=0}^{n}f_j\,\delta_j(\delta_ie)
	=\sum_{j=0}^{n}\delta_i(f_j)\,\delta_je+D(\delta_ie),
	\]
	that is, $D(\delta_ie)=-\sum_{j=0}^{n}\delta_i(f_j)\,\delta_je$ for
	$i=0,\dots,n$.
\end{proof}

\begin{lemma}\label{lemma-stepA}
Let $K\subseteq F\subset \mathcal E\subset K_\infty$ be $\Delta$-fields,
where $\mathcal E$ is a normal extension of $F$ such that
$C(\mathcal E, D)=C$. Let $e\in K_\infty$ be a first integral of the system
\eqref{system-fode} and let $e^*$ be either $1$ or $e$. Suppose that
$\delta_{i}(e)/e^*\in\mathcal E$ for all $0\leq i\leq n$. Then
\begin{enumerate}[(i)]
\item \label{pseudologderivatives} there are a Picard-Vessiot extension $L$
of $F$ with $L\subseteq \mathcal E$ and a $\Delta$-stable,
finite-dimensional $F$-subspace $V$ of $L$ such that
$\delta_i(e)/e^*\in V$ for each $0\leq i\leq n$;
\item \label{expintegral-constant} the system \eqref{system-fode} has a
first integral $\eta\in K_\infty$ such that $\delta_i(\eta)=\delta_i(e)/e^*$
for all $0\leq i\leq n$; in particular, $V+\mathrm{Span}_F\{\eta\}$ is a
$\Delta$-stable, finite-dimensional $F$-subspace of $K_\infty$.
\end{enumerate}
\end{lemma}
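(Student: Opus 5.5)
The plan is to write $a_i:=\delta_i(e)/e^*\in\mathcal E$ for $0\le i\le n$ and to show that the vector $a:=(a_0,\dots,a_n)^{\mathsf t}$ is a solution in $\mathcal E^{n+1}$ of the linear $D$-system \eqref{equ-jac-D}. When $e^*=1$ this is exactly Proposition~\ref{lemma-gradient}. When $e^*=e$, the relation $De=0$ gives
\[
D\bigl(\delta_i(e)/e\bigr)=D(\delta_i e)/e ,
\]
so $a$ again satisfies \eqref{equ-jac-D}. Since $C(\mathcal E,D)=C$, the solutions of \eqref{equ-jac-D} in $\mathcal E^{n+1}$ form a $C$-vector space $\mathcal S$ with $\dim_C\mathcal S\le n+1$, by the usual Wronskian argument over the $D$-field $\mathcal E$.

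\textbf{Galois stability.} Every $\sigma\in\mathscr G(\mathcal E|F)$ commutes with $D$ and fixes the entries $\delta_if_j\in K\subseteq F$, so $\sigma(a)\in\mathcal S$. Consequently each $V_i:=\mathrm{Span}_C\{\sigma(a_i):\sigma\in\mathscr G(\mathcal E|F)\}$ is finite-dimensional, being the image of $\mathrm{Span}_C\{\sigma(a)\}\subseteq\mathcal S$ under the $i$-th coordinate projection. The sum $W:=V_0+\cdots+V_n$ is therefore a finite-dimensional $\mathscr G(\mathcal E|F)$-stable $C$-subspace of $\mathcal E$.

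\textbf{Fixed field and part (i).} Since $\mathcal E$ is a normal extension of $F$ inside $K_\infty=F_\infty$, Theorem~\ref{lemma-aut-extend}(ii), applied with $F$ in place of $K$, gives $\mathcal E^{\mathscr G(\mathcal E|F)}=F$. Proposition~\ref{remark-orbit-general}, applied to $L=\mathcal E$ over $F$, then produces a Picard-Vessiot ring $R$ over $F$ with $W\subset R\subset\mathcal E$. Let $L$ be its field of fractions, so $L\subseteq\mathcal E$. By Theorem~\ref{char-PV-ring}(iii), each $a_i$ lies in a finite-dimensional $\Delta$-stable $F$-subspace of $L$. The sum of these finitely many subspaces is the required $V$.

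\textbf{Part (ii).} First I check the compatibility $\delta_j a_i=\delta_i a_j$. For $e^*=1$ this is commutativity of $\Delta$. For $e^*=e$,
\[
\delta_j\bigl(\delta_i e/e\bigr)=\frac{\delta_j\delta_i e}{e}-\frac{\delta_i e\,\delta_j e}{e^2},
\]
which is symmetric in $i,j$. Hence the system
\[
\delta_i\begin{pmatrix}y\\ 1\end{pmatrix}=\begin{pmatrix}0&a_i\\0&0\end{pmatrix}\begin{pmatrix}y\\ 1\end{pmatrix},\qquad 0\le i\le n,
\]
is integrable with coefficients in $\mathcal E\subseteq K_\infty$. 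By Theorem~\ref{thm-K-inf}(ii) it has a fundamental solution matrix in $K_\infty$, and a column with nonzero second entry, after rescaling, gives $\eta\in K_\infty$ with $\delta_i\eta=a_i$. (For $e^*=1$ one can simply take $\eta=e$.)

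Then $D\eta=\sum_j f_ja_j=De/e^*=0$. Moreover some $a_i\neq0$, because $e$ is a first integral and so some $\delta_i e\neq 0$; hence $\eta$ is a first integral. Finally $\delta_i\eta=a_i\in V$ and $V$ is $\Delta$-stable, so $V+\mathrm{Span}_F\{\eta\}$ is a $\Delta$-stable finite-dimensional $F$-subspace of $K_\infty$.

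The main obstacle is the finite-dimensionality step in the second paragraph. It needs both $C(\mathcal E,D)=C$, to bound the solution space of \eqref{equ-jac-D}, and $\mathcal E^{\mathscr G(\mathcal E|F)}=F$, to invoke Proposition~\ref{remark-orbit-general}. The latter rests on the normality hypothesis, since $\mathcal E$ need not be finitely generated over $F$.
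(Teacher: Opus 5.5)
Your proposal is correct and follows the paper's proof essentially step for step. In both, the bounded solution space of \eqref{equ-jac-D} in $\mathcal E^{n+1}$ makes the Galois orbits finite-dimensional, Theorem~\ref{lemma-aut-extend}(ii) and Proposition~\ref{remark-orbit-general} then produce the Picard-Vessiot ring and $V$, and Theorem~\ref{thm-K-inf}(ii) yields $\eta$. Your explicit check of $\delta_j a_i=\delta_i a_j$ and your recasting of $\delta_iY=a_i$ as a homogeneous $2\times 2$ integrable system are small but welcome refinements, since the paper applies Theorem~\ref{thm-K-inf}(ii) directly to the inhomogeneous system.
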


\begin{proof}
Let $g_{i}:=\delta_{i}e/e^*\in\mathcal E$. Since $e^*\in C(K_\infty, D)$, it
follows from Proposition~\ref{lemma-gradient}, applied with $K_\infty$ in
place of $E$, that $g:=(g_{0},\dots,g_{n})^{\mathsf t}$ is also a solution
of \eqref{equ-jac-D}. Since $C(\mathcal E, D)=C$, the set of all solutions
in $\mathcal E^{n+1}$ of \eqref{equ-jac-D} is a
$\mathscr G(\mathcal E|F)$-stable $C$-vector space of dimension at most
$n+1$. It follows that for each $0\leq i\leq n$,
$$W_i:=\operatorname{Span}_{C}\{\sigma(g_{i}):\sigma\in\mathscr{G}(\mathcal E|F)\}$$
is finite-dimensional. Let $W=W_0+\cdots+W_n$. Then $W$ is
$\mathscr G(\mathcal E|F)$-stable. Since $F_\infty=K_\infty$ and
$\mathcal E$ is a normal extension of $F$,
Theorem~\ref{lemma-aut-extend}(ii), applied with $F$ in place of $K$ and
$\mathcal E$ in place of $E$, gives
$\mathcal E^{\mathscr G(\mathcal E|F)}=F$. Hence
Proposition~\ref{remark-orbit-general}, applied with $F$, $\mathcal E$ and
$W$ in place of $K$, $L$ and $V$, shows that $\mathcal E$ contains a
Picard-Vessiot ring $R$ over $F$ such that $W\subset R$. Let
$w_1, \dots, w_l$ be a $C$-basis of $W$ and $L$ be the Picard-Vessiot
extension with Picard-Vessiot ring $R$. By Theorem~\ref{char-PV-ring},
applied to the Picard-Vessiot extension $L$ of $F$, each $w_{k}$ is
contained in a $\Delta$-stable, finite-dimensional $F$-subspace $V_{k}$ of
$L$. Consequently, their sum $V = V_1 + \dots + V_l$ is also a
$\Delta$-stable, finite-dimensional $F$-subspace of $L$. Since each $g_i$
lies in $W\subset V$, this proves (\ref{pseudologderivatives}).

Next, since $e^*$ is either $1$ or $e$, we have
$\delta_{j}g_{i}=\delta_{i}g_{j}$ for all $i,j$ and therefore the system
$$
\delta_{i}Y=g_{i},\qquad i=0,\dots,n
$$
is integrable. By Theorem~\ref{thm-K-inf}(ii), it then has a solution
$\eta\in K_\infty$. Note that
$$
D\eta=\sum_{i=0}^{n}f_{i}g_{i}=\frac{De}{e^*}=0.
$$
Since $e\notin C$, there is an $i$, $0\leq i\leq n$, such that
$g_{i}\neq0$. Then $\delta_i(\eta)=g_i\neq 0$ and therefore $\eta$ is a
first integral of \eqref{system-fode}. Since $\delta_i(\eta)=g_i\in V$ for
all $i$, the space $V+\mathrm{Span}_F\{\eta\}$ is a $\Delta$-stable,
finite-dimensional $F$-subspace of $K_\infty$. This proves
(\ref{expintegral-constant}).
\end{proof}

\begin{lemma}\label{lemma-trade}
Let $K\subseteq F_0\subseteq F_1\subseteq F_2\subset K_\infty$ be
$\Delta$-fields such that $C(F_0,D)=C$ and, for $i=1,2$, $F_i$ is a
Liouvillian Picard-Vessiot extension of $F_{i-1}$. Suppose that
\eqref{system-fode} has a first integral in $F_2$. Then \eqref{system-fode}
has a first integral in some Liouvillian Picard-Vessiot extension of $F_0$.
\end{lemma}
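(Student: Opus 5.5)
We want a first integral in a single Liouvillian Picard-Vessiot extension of $F_0$. If $C(F_1,D)\neq C$, then $F_1$ itself works. So we assume $C(F_1,D)=C$. We apply Corollary~\ref{thm-in-Liouv-PV} with $F=F_1$ and $E=F_2$. This yields a first integral $e\in F_2$ such that, fixing $j$ with $\delta_j e\neq 0$, the ratios $\delta_i e/\delta_j e$ and $\delta_i(\delta_j e)/\delta_j e$ lie in $F_2^{\mathscr G^0}$, where $\mathscr G=\mathscr G(F_2|F_1)$. The field $F_2^{\mathscr G^0}$ is a finite Galois extension of $F_1$. Let $\mathcal E$ be the algebraic closure of $F_1$ in $K_\infty$. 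This is a normal extension of $F_0$: $F_1$ is normal over $F_0$, being Picard-Vessiot, and the paper notes that algebraic closures of normal extensions are normal. We will need $C(\mathcal E,D)=C$. This should follow because an algebraic $D$-constant over $F_1$ has a minimal polynomial whose coefficients are $D$-constants in $F_1$, hence lie in $C$, so the constant lies in $C$ since $C$ is algebraically closed.

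The plan is to strip off the top two layers in turn using Lemma~\ref{lemma-stepA}. First we handle the exponential part. Set $u:=\delta_j e$; then $\delta_i u/u\in\mathcal E$ for all $i$. We would like $u$ itself, suitably normalised, to be a first integral, but it is not one. Instead we pass to ratios. Put $h_i:=\delta_i e/\delta_j e\in\mathcal E$. The idea is to find a first integral $\eta$ with $\delta_i\eta = h_i\cdot \lambda$, where $\lambda$ is an exponential over $\mathcal E$. A cleaner route is the two-case structure of the proof of Corollary~\ref{thm-in-Liouv-PV}.

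\emph{Case (a):} $e$ can be taken with $\delta_i e/e\in \mathcal E$, the torus case. We apply Lemma~\ref{lemma-stepA} with $e^*=e$ and $F=F_0$. This gives a first integral $\eta$ with $\delta_i\eta=\delta_i e/e$, and $\eta$ lies in a finite-dimensional $\Delta$-stable $F_0$-subspace $V+\mathrm{Span}_{F_0}\{\eta\}$ of $K_\infty$.

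\emph{Case (b):} $e$ can be taken with $\sigma(e)=c_\sigma e+d_\sigma$, the unipotent-type case. Then $\delta_i e=:g_i$ satisfies $\sigma(g_i)=c_\sigma g_i$. Let $\ell_i:=\delta_i(\delta_j e)/\delta_j e\in\mathcal E$. The element $\delta_j e$ is an exponential of an integral over $\mathcal E$, and $\delta_i e=(\delta_ie/\delta_je)\,\delta_je$. We apply Lemma~\ref{lemma-stepA}(i) to the vector $(\delta_i e/\delta_j e)_i$, which is a solution of a twisted version of \eqref{equ-jac-D}, to get an $F_0$-Picard-Vessiot $L\subseteq\mathcal E$ containing these ratios and the $\ell_i$. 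Then $\delta_j e$ lies in a rank-one $\Delta$-stable extension over $L$, and $e$ lies in a finite-dimensional $\Delta$-stable $F_0$-space $V\oplus \mathrm{Span}\{\delta_j e\}\cdot V\oplus\mathrm{Span}\{e\}$ of $K_\infty$. Alternatively, we could invoke Lemma~\ref{lemma-stepA}(ii) with $e^*=1$ after replacing $\mathcal E$ by $\mathcal E(\delta_j e)$; we would need to check normality of this field and the condition $C(\cdot,D)=C$ for it.

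In either case we obtain a first integral $\eta\in K_\infty$ lying in a finite-dimensional $\Delta$-stable $F_0$-subspace of $K_\infty$, and also lying in a Liouvillian extension of $F_0$. The latter holds because $\eta$ is obtained by an integration over $\mathcal E(\delta_je)$, and $\mathcal E\cap$(relevant finitely generated part) is Liouvillian over $F_0$, since $F_1$ is Liouvillian over $F_0$ and then we adjoin algebraic elements, an exponential and an integral. Proposition~\ref{lemma-holo-liouv}, applied over $F_0$, then places $\eta$ in a Liouvillian Picard-Vessiot extension of $F_0$, which is what the lemma asserts.

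The main obstacle is case (b). There, the relevant derivatives are not in $\mathcal E$ but only in an exponential extension of it, so Lemma~\ref{lemma-stepA} does not apply verbatim. One must show that $\delta_j e$, and hence all $\delta_i e$, lie in a finite-dimensional $\Delta$-stable $F_0$-space. I would try to obtain this by finding $w$ with $\delta_i w/w\in L$ via Proposition~\ref{remark-orbit-general} applied to the $\mathscr G(K_\infty|F_0)$-orbit of the logarithmic derivatives $\ell_i$. These satisfy a Riccati-type system over $\mathcal E$, and their orbit should be finite-dimensional modulo logarithmic derivatives of elements of $\mathcal E$.
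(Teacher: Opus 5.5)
Your case (a) is sound, and it is exactly how the paper finishes one branch: Lemma~\ref{lemma-stepA} with $\overline{F_1}$ as $\mathcal E$ and $e^*=e$, then a finite algebraic extension $\widetilde{F_1}\supseteq V$ of $F_1$, then Proposition~\ref{lemma-holo-liouv}. The gap is case (b). You flag it yourself, but the routes you sketch there do not work.

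Lemma~\ref{lemma-stepA} needs the vector $(\delta_ie/e^*)_i$ with $e^*\in\{1,e\}$ a $D$-constant. The whole argument rests on this vector solving the \emph{linear} system \eqref{equ-jac-D}, whose solutions in a field without new $D$-constants form a $C$-space of dimension at most $n+1$. The ratios $h_i=\delta_ie/\delta_je$ instead satisfy the quadratic system $Dh_i=-\sum_k(\delta_if_k)h_k+h_i\sum_k(\delta_jf_k)h_k$, so no finite-dimensionality follows. Being algebraic over $F_1$ does not give a finite-dimensional $\mathscr G(K_\infty|F_0)$-orbit either: for $t$ an integral over $F_0$, the conjugates $\pm\sqrt{t+c}$ of $\sqrt t$ span an infinite-dimensional space. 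The space $V\oplus\mathrm{Span}\{\delta_je\}V\oplus\mathrm{Span}\{e\}$ is also not $\Delta$-stable unless $\ell_iV\subseteq V$, which nothing guarantees. What you need is that the $\delta_ie$ themselves lie in a finite-dimensional $\Delta$-stable $F_0$-space, and case (b) as written never provides this.

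The missing idea is to make your ``alternative'' work and to change the case distinction. Apply Lemma~\ref{lemma-stepA} with $e^*=1$ to the gradient $(\delta_ie)_i$, which does solve \eqref{equ-jac-D}. Work over the field $\mathcal E$ generated over $\overline{F_1}$ by all $\sigma(\delta_ie)$ with $\sigma\in\mathscr G(K_\infty|F_0)$. This field is normal over $F_0$ by construction, which settles your normality concern.

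Nothing in your case (b) hypothesis guarantees $C(\mathcal E,D)=C$. These $\sigma$ need not preserve $F_2$, so the absence of first integrals in $F_2^{\mathscr U}$ says nothing about $\mathcal E$. The failure of this condition is, however, harmless:
\begin{enumerate}[(i)]
\item A first integral in $\mathcal E$ lies in $\overline{F_1}(\sigma_k(\delta_ie): 0\le i\le n,\ k\le m)$ for finitely many $\sigma_k$.
\item By the first part of Corollary~\ref{thm-in-Liouv-PV}, $\delta_l(\delta_ie)/\delta_ie\in F_2^{\mathscr G^0}\subset\overline{F_1}$, and each $\sigma_k$ maps $\overline{F_1}$ onto itself. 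So this field is generated by elements whose logarithmic derivatives lie in $\overline{F_1}$, and it is a Picard-Vessiot extension of $\overline{F_1}$ with torus Galois group.
\item The second part of Corollary~\ref{thm-in-Liouv-PV}, applied over $\overline{F_1}$, then yields $e_1$ with $\delta_ie_1/e_1\in\overline{F_1}$. This is precisely your case (a).
\end{enumerate}
If instead $C(\mathcal E,D)=C$, Lemma~\ref{lemma-stepA}(i) puts every $\delta_ie$ in a finite-dimensional $\Delta$-stable $F_0$-space $V$. Then $V+F_0e$ contains $e\in F_2$, and Proposition~\ref{lemma-holo-liouv} finishes.

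So the dichotomy should be on $C(\mathcal E,D)$, not on whether $F_2^{\mathscr U}$ contains a first integral. With that change, no separate treatment of the unipotent part is needed.
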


\begin{proof}
To avoid triviality, we may assume $C(F_1,D)=C$. Let
$\overline{F_1}\subset K_\infty$ be the algebraic closure of $F_1$. Then
$C(\overline{F_1},D)=C$ as well. Let $\mathscr G:=\mathscr G(F_2|F_1)$.
Applying Corollary~\ref{thm-in-Liouv-PV} with $F_1$ in place of $F$ and
$F_2$ in place of $E$, we obtain a first integral $e$ of the system
\eqref{system-fode} belonging to the Picard-Vessiot ring of the
Picard-Vessiot extension $F_2$ of $F_1$, such that for all
$\delta_i, \delta_j\in \Delta$ with $\delta_j(e)\neq 0$,
\begin{equation}\label{logderivinalgclosure}
\delta_i(\delta_j(e))/\delta_j(e)\in F^{\mathscr{G}^0}_2\subset
\overline{F_1}.
\end{equation}
Let $\mathcal E$ be the field generated over $\overline{F_1}$ by
$\{\delta_i(\sigma(e)): \sigma\in \mathscr G(K_\infty|F_0),
\delta_i\in \Delta\}$. Then $\mathcal E$ is a normal extension of $F_0$
containing each $\delta_i(e)$.

Suppose that $\mathcal E$ contains a first integral $\tilde{e}$ of the
system \eqref{system-fode}. Then one can find elements
$\sigma_1,\dots, \sigma_m\in \mathscr G(K_\infty|F_0)$ such that
$$\tilde{e}\in E:=\overline{F_1}\left(\delta_i(\sigma_j(e)):\delta_i\in
\Delta, \ j=1,2,\dots, m\right).$$
By \eqref{logderivinalgclosure}, and since each $\sigma_j$ maps
$\overline{F_1}$ onto itself, $E$ is generated over $\overline{F_1}$ by
elements whose logarithmic derivatives lie in $\overline{F_1}$. Hence $E$
is a Liouvillian Picard-Vessiot extension of $\overline{F_1}$, and
$\mathscr G(E|\overline{F_1})$ is connected and has a trivial unipotent
radical. Applying Corollary~\ref{thm-in-Liouv-PV} with $\overline{F_1}$ in
place of $F$ and the present $E$ in place of $E$, so that
$E^{\mathscr U}=E$ contains the first integral $\tilde e$ and
$E^{\mathscr{G}(E|\overline{F_1})^0}=\overline{F_1}$, we obtain a first
integral $e_1\in E$ such that $\delta_i(e_1)/e_1\in \overline{F_1}$ for
every $\delta_i\in \Delta$. Noting that $\overline{F_1}$ is a normal
extension of $F_0$ and that $C(\overline{F_1},D)=C$, we apply
Lemma~\ref{lemma-stepA} with $F_0$ in place of $F$, $\overline{F_1}$ in
place of $\mathcal E$ and $e_1$ in place of both $e$ and $e^*$, to obtain a first integral $\eta$
of \eqref{system-fode} and a $\Delta$-stable finite-dimensional
$F_0$-space $V\subset \overline{F_1}$ such that
$\delta_i(\eta)=\delta_i(e_1)/e_1\in V$ for all $i$. Since $V$ is finite
dimensional, we can choose a finite algebraic extension $\widetilde{F_1}$
of $F_1$ such that $V\subset \widetilde{F_1}$. Since $F_1$ is a Liouvillian
extension of $F_0$, so is $\widetilde{F_1}(\eta)$. 
Applying
Proposition~\ref{lemma-holo-liouv} with $F_0$ in place of $K$,
$\widetilde{F_1}(\eta)$ in place of $M$, $\eta$ in place of $f$ and noting that $\eta$ belongs to the 
finite-dimensional $\Delta$-stable $F_0$-space
$V+\mathrm{Span}_{F_0}\{\eta\}$, we conclude that $\eta$ belongs to a
Liouvillian Picard-Vessiot extension of $F_0$.

Finally, suppose that $C(\mathcal E, D)=C$. Applying
Lemma~\ref{lemma-stepA} with $F_0$ in place of $F$
and $e^*:=1$, we obtain a $\Delta$-stable finite-dimensional
$F_0$-subspace $V$ of $\mathcal E$ containing each $\delta_i(e)$. Then
$V+\mathrm{Span}_{F_0}\{e\}$ is a $\Delta$-stable finite-dimensional
$F_0$-subspace of $K_\infty$ containing $e$. Since $F_2$ is a Liouvillian
extension of $F_0$ containing $e$, applying
Proposition~\ref{lemma-holo-liouv} with $F_0$ in place of $K$, $F_2$ in
place of $M$ and $e$ in place of $f$ shows that $e$ belongs to a
Liouvillian Picard-Vessiot extension of $F_0$.
\end{proof}

\begin{proof}[Proof of Theorem~\ref{main-thm-B}, assertions (\ref{thmB1}) and (\ref{thmB2})]
Suppose that \eqref{system-fode} admits a Liouvillian first integral. If
\eqref{system-fode} admits a first integral in a Liouvillian Picard-Vessiot
extension $E$ of $K$, then Corollary~\ref{thm-in-Liouv-PV}, applied with $K$
in place of $F$, yields a first integral satisfying (\ref{thmB1}) and
(\ref{thmB2}) with $\widetilde K:=E^{\mathscr{G}(E|K)^0}$, which is a finite
algebraic extension of $K$. Hence, we only need to show that
\eqref{system-fode} admits a first integral in a Liouvillian Picard-Vessiot
extension of $K$. Let $e$ be a Liouvillian first integral of
\eqref{system-fode}. From Section~\ref{defn-LiouvExtn}, we know that $e$
lies in a $\Delta$-field $E_{\ell}$ admitting a tower
$$
K=E_{0}\subseteq E_{1}\subseteq\cdots\subseteq E_{\ell},
$$
where each $E_i$ is a Liouvillian Picard-Vessiot extension of $E_{i-1}$.
Since $e\in C(E_\ell,D)\setminus C$ and $C(E_0,D)=C$, we may choose the
least index $s\geq 1$ such that $C(E_{s},D)\neq C$ and discard the
extensions containing $E_{s}$, so that
\begin{equation}\label{equ-minimal-s}
	C(E_{j},D)=C\quad(0\leq j\leq s-1),\qquad C(E_{s},D)\neq C.
\end{equation}

If $s=1$, then we are done. Therefore, assume that $s\geq 2$ and apply
Lemma~\ref{lemma-trade} with $F_0=E_{s-2}$, $F_{1}=E_{s-1}$ and
$F_{2}=E_{s}$ to obtain a Liouvillian Picard-Vessiot extension $M$ of
$E_{s-2}$ in which \eqref{system-fode} has a first integral. Thus, we have
a tower of $\Delta$-fields
$$
K=E_{0}\subseteq E_{1}\subseteq\cdots\subseteq E_{s-2}\subseteq M,
$$
where for $1\leq i\leq s-2$, $E_i$ is a Liouvillian Picard-Vessiot
extension of $E_{i-1}$ and $M$ is a Liouvillian Picard-Vessiot extension
of $E_{s-2}$. This tower has length $s-1$ and $M$ contains a
first integral. Repeating this argument, one obtains a first integral of
\eqref{system-fode} in a Liouvillian Picard-Vessiot extension of $K$.
\end{proof}

\subsection{Singer's theorem revisited}\label{subsec-singer-descent}

For any autonomous planar system admitting a first integral $e$, we have $D(e)=f_1\delta_1(e)+f_2\delta_2(e)=0$. Since $\delta_2(e)\neq 0$, we have
$\delta_1e/\delta_2e=-f_2/f_1\in K$; that is, condition (\ref{thmB1})
of Theorem~\ref{main-thm-B} holds over $K$ itself. We show in the next proposition that, whenever (\ref{thmB1}) holds over $K$, there is a Liouvillian first integral satisfying both (\ref{thmB1}) and
(\ref{thmB2}) over $K$. Using the proposition, we will obtain Singer's theorem,
as stated in Section~\ref{intro}, as an easy consequence.

\begin{proposition}\label{prop-descent}
	Let $e$ and $\widetilde K$ be as in Theorem~\ref{main-thm-B}. If
	condition (\ref{thmB1}) holds over $K$, then \eqref{system-fode}
	admits a Liouvillian first integral satisfying (\ref{thmB1}) and
	(\ref{thmB2}) over $K$.
\end{proposition}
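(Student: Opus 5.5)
Fix an index $j$ with $\delta_j e\neq 0$ and set $a_i:=\delta_i(\delta_j e)/\delta_j e\in\widetilde K$ for $0\le i\le n$. By hypothesis each $r_i:=\delta_i e/\delta_j e$ lies in $K$, so $\delta_i e=r_i\,\delta_j e$. The plan is to replace $\delta_j e$ by a "norm" of it, which has logarithmic derivatives in $K$. We then integrate $r_i$ times this norm to obtain a new first integral $\tilde e$.

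First, enlarge $\widetilde K$ to a finite Galois extension of $K$ inside $K_\infty$; it is a $\Delta$-field, and its Galois group $H$ consists of $K$-$\Delta$-automorphisms. Let $d=[\widetilde K:K]$ and put
\[
b_i:=\frac1d\operatorname{Tr}_{\widetilde K/K}(a_i)=\frac1d\sum_{\tau\in H}\tau(a_i)\in K .
\]
The family $(a_i)$ satisfies $\delta_k a_i=\delta_i a_k$, since $a_i=\delta_i\log\delta_j e$. Trace commutes with each $\delta\in\Delta$, so $\delta_k b_i=\delta_i b_k$. By Theorem~\ref{thm-K-inf}(ii) there is a nonzero $\lambda\in K_\infty$ with $\delta_i\lambda=b_i\lambda$ for all $i$. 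Concretely, $\lambda$ is a $d$-th root of $\prod_\tau \tau$ applied to $\delta_j e$, up to constants; this shows it is Liouvillian over $K(\ldots)$, but we will not need that. Since $b_i\in K$, the extension $K(\lambda)$ is an exponential-of-integral extension of $K$, provided it has no new constants. This holds for $K(\lambda)\subseteq K_\infty$ because $C(K_\infty,\Delta)=C$.

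Next define $g_i:=r_i\lambda$ and check integrability, $\delta_k g_i=\delta_i g_k$. This step is the main obstacle. From $\delta_i e=r_i\delta_j e$ we get
\[
\delta_k(r_i)+r_i a_k=\delta_i(r_k)+r_k a_i .
\]
This is a relation with $r$'s in $K$ and $a$'s in $\widetilde K$. Applying each $\tau\in H$, using $\tau(r_i)=r_i$, and averaging yields the same identity with $a$ replaced by $b$. That identity is exactly $\delta_k(r_i\lambda)=\delta_i(r_k\lambda)$. Hence the system $\delta_iY=g_i$ is integrable and, again by Theorem~\ref{thm-K-inf}(ii), has a solution $\tilde e\in K_\infty$.

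Then $D\tilde e=\lambda\sum_i f_i r_i=\lambda\,De/\delta_j e=0$, and $\delta_j\tilde e=\lambda\neq0$, so $\tilde e$ is a first integral. It lies in $K(\lambda,\tilde e)$, an exponential followed by an integral over $K$, which is Liouvillian. Finally, whenever $\delta_m\tilde e\neq0$, that is $r_m\neq0$, we have $\delta_i\tilde e/\delta_m\tilde e=r_i/r_m\in K$. We also have $\delta_i(\delta_m\tilde e)/\delta_m\tilde e=\delta_i r_m/r_m+b_i\in K$. These are (\ref{thmB1}) and (\ref{thmB2}) over $K$.
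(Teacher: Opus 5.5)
Your proposal is correct and is essentially the paper's own proof: your $a_i$, $b_i$, $\lambda$ and $r_i\lambda$ are the paper's $u_i$, $U_i$, $\rho$ and $\rho g_i$, and integrability comes, as in the paper, from trace-averaging the relation $\delta_k r_i + r_i a_k = \delta_i r_k + r_k a_i$ over the Galois closure. Your aside describing $\lambda$ as a $d$-th root of a norm of $\delta_j e$ would first require extending each $\tau$ to $K_\infty$ via Theorem~\ref{lemma-aut-extend}(i); as you say, the argument does not use it.
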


\begin{proof} Fix $0\leq j_0\leq n$ such that $\delta_{j_0}e\neq0$ and for notational convenience, let $$g_i:=\frac{\delta_i(e)}{\delta_{j_0}(e)}\in K\qquad \text{and}\qquad u_i:=\frac{\delta_i(\delta_{j_0}(e))}{\delta_{j_0}(e)}\in \widetilde{K}\quad \text{for } i=0,1,\dots,n.$$
   For $0\leq i,j\leq n$,  a calculation using the identity $\delta_j\delta_ie=\delta_i\delta_je$  gives
	\begin{align}\label{equ-desc-compat}
		\delta_jg_i-\delta_ig_j&=u_ig_j-u_jg_i,\\
       \label{commute-deltau} \delta_i(u_j)&=\delta_j(u_i).
	\end{align}
    Replace $\widetilde K$ by its Galois closure over $K$. Let $$\mathrm{Tr}_{\widetilde K|K}(x)=\sum_{\sigma\in \mathrm{Gal}(\widetilde{K}|K)}\sigma(x),\quad x\in \widetilde{K}.$$ Let $N=[\widetilde K: K]$ and 
	$$
	U_i :=\; \frac{1}{N}\,\mathrm{Tr}_{\widetilde K|K}(u_i)\;\in\;K,
	\qquad i=0,\dots,n.
	$$
Since every element of the Galois group $\mathrm{Gal}(\widetilde K|K)$ commutes with each $\delta_i$, from \eqref{commute-deltau}, we obtain that the system
    \begin{equation}\label{u-int-sys}
        \delta_iY=U_iY, \qquad 0\leq i\leq n
    \end{equation}
    is 
	integrable over $K.$ 
    
    Let $\rho$ be a nonzero solution of \eqref{u-int-sys} in $K_\infty$. Applying $\mathrm{Tr}_{\widetilde K|K}$ to \eqref{equ-desc-compat}, and using
	that $g_i$ and $\delta_jg_i-\delta_ig_j$ lie in $K$, gives
	$\delta_jg_i-\delta_ig_j=U_ig_j-U_jg_i$. Therefore $$\delta_j(\rho g_i)=\rho\left(U_jg_i+\delta_jg_i\right)=\rho\left(U_ig_j+\delta_ig_j\right)=\delta_i(\rho g_j).$$ Since $\rho g_i\in K(\rho)$, the system
	\begin{equation}\label{r-int-sys}
	    \delta_i Y=\rho g_i, \qquad i=0,\dots,n ,
	\end{equation}
	is integrable over $K(\rho)$. Let $\tilde e$ be a solution of \eqref{r-int-sys} in $K_\infty$. Then $K(\rho, \tilde{e})$ is a Liouvillian extension of $K$ and
	$$
D\tilde e=\sum_i f_i\,\rho g_i=\frac{\rho}{\delta_{j_0}e}\sum_i f_i\,\delta_ie
=\frac{\rho}{\delta_{j_0}e}\,De=0.
$$
	Since $\delta_{j_0}\tilde e=\rho\neq0$, we have $\tilde e\notin C$. Thus, $\tilde e$ is a Liouvillian first integral of the system \eqref{system-fode}. Finally, whenever
	$g_j\neq0$, 
	$\delta_i\tilde e/\delta_j\tilde e=g_i/g_j\in K$ and
	$\delta_i\delta_j\tilde e/\delta_j\tilde e=U_i+\delta_ig_j/g_j\in K$. This completes the proof.
\end{proof}

\begin{proof}[Proof of Theorem~\ref{main-thm-B}, final assertion]
Let $\Delta=\{\delta_a,\delta_b\}$, so that $D=f_a\delta_a+f_b\delta_b$ with
$f_a,f_b\neq0$. Let $e$ and $\widetilde K$ be as in
Theorem~\ref{main-thm-B} (\ref{thmB1}) and (\ref{thmB2}). Then
$f_a\delta_ae+f_b\delta_be=0$. If one of $\delta_ae$, $\delta_be$ were zero,
then so would be the other, and $e$ would be a $\Delta$-constant. Hence
$\delta_ae\neq0$, $\delta_be\neq0$ and
$\delta_ae/\delta_be=-f_b/f_a\in K$, that is, (\ref{thmB1}) holds over $K$.
Now, Proposition~\ref{prop-descent} gives a Liouvillian first integral
satisfying (\ref{thmB1}) and (\ref{thmB2}) over $K$. This completes the
proof of Theorem~\ref{main-thm-B}.
\end{proof}

\begin{proof}[Proof of Singer's theorem]
Suppose that the system has a Liouvillian first integral. By the final
assertion of Theorem~\ref{main-thm-B}, there is a Liouvillian first integral
$\tilde e$ satisfying (\ref{thmB1}) and (\ref{thmB2}) over $K$. Now the
computation following Theorem~\ref{main-thm-B} in Section~\ref{intro}
produces from $\tilde e$ an integrating factor
$\lambda=\exp\int(U\,dy_1+V\,dy_2)$ of $f_2\,dy_1-f_1\,dy_2$ with
$U,V\in K$ and $\delta_2U=\delta_1V$. The converse is immediate, as noted in
Section~\ref{intro}.
\end{proof}

\subsection{Results of Aziz et al. revisited}\label{subsec-comparison}

Assume that $k=C$ so that \eqref{system-fode} is now
an autonomous system and $D=\sum_{i=1}^{n}f_i\delta_i$ is a rational
vector field on $C^{n}$. In \cite{ACPW25} and \cite{CPW25}, Liouvillian
first integrals of such vector fields are studied in the language of
differential forms. To the $C$-derivation $D$ one associates the $(n-1)$-form
$$
\Omega \;:=\; \sum_{i=1}^{n}(-1)^{i-1}f_i\,
dy_1\wedge\cdots\wedge\widehat{dy_i}\wedge\cdots\wedge dy_n,
$$
where $dy_1\wedge\cdots\wedge\widehat{dy_i}\wedge\cdots\wedge dy_n$ denotes the wedge product $dy_1\wedge\cdots\wedge dy_{i-1}\wedge dy_{i+1}\wedge\cdots\wedge dy_n$. Since $d\varphi=\sum_i\delta_i\varphi\,dy_i$ and
$dy_i\wedge dy_1\wedge\cdots\wedge\widehat{dy_i}\wedge\cdots\wedge dy_n
=(-1)^{i-1}\,dy_1\wedge\cdots\wedge dy_n$, one has, for any $1$-form
$\omega=\sum_i a_i\,dy_i$ and any $\varphi$ in a $\Delta$-field extension of
$K$,
\begin{equation}\label{equ-wedge-D}
	\omega\wedge\Omega=\Big(\sum_i a_if_i\Big)\,dy_1\wedge\cdots\wedge dy_n,
	\qquad
	d\varphi\wedge\Omega=(D\varphi)\,dy_1\wedge\cdots\wedge dy_n .
\end{equation}
Following \cite[Definition~3]{ACPW25}, a nonconstant element $\varphi$
in a Liouvillian extension of $K$ is a \emph{Liouvillian first
integral} of the vector field when $d\varphi\wedge\Omega=0$, which by
\eqref{equ-wedge-D} is equivalent to $D\varphi=0$. In this language, \cite[Theorem~3]{ACPW25} states that if the vector
field has a Liouvillian first integral, then there are a finite
algebraic extension $\widetilde K$ of $K$ and $1$-forms $\omega\neq0$,
$\alpha$ over $\widetilde K$ with
\begin{equation}\label{equ-acpw-def}
	\omega\wedge\Omega=0, \qquad d\omega=\alpha\wedge\omega, \qquad d\alpha=0,
\end{equation}
and $\int\omega\exp(-\int\alpha)$ is then a Liouvillian first integral.
We show that this follows from Theorem~\ref{main-thm-B}, and that $\omega$
can be taken over $K$, in the sense of \cite{CPW25}, if and only if
condition (\ref{thmB1}) of Theorem~\ref{main-thm-B} holds over $K$.

Let $e$ and $\widetilde K$ be as in Theorem~\ref{main-thm-B}. Fix
$j_0$ with $\delta_{j_0}e\neq0$, and put $\ell:=\delta_{j_0}e$,
$r_i:=\delta_ie/\ell$ and $b_i:=\delta_i\ell/\ell$. Then,
$r_i,b_i\in\widetilde K$ by (\ref{thmB1}) and (\ref{thmB2}),
$r_{j_0}=1$, and
\begin{equation}\label{equ-e-ell}
	\delta_ie=r_i\ell,\qquad \delta_i\ell=b_i\ell\qquad(1\leq i\leq n).
\end{equation}
Since the derivations commute, applying $\delta_j$ to
$\delta_ie=r_i\ell$ and to $\delta_i\ell=b_i\ell$ and comparing with
the same expressions with $i$ and $j$ interchanged gives
\begin{equation}\label{equ-rb-compat}
	\delta_jr_i-\delta_ir_j=r_jb_i-r_ib_j,
	\qquad
	\delta_jb_i=\delta_ib_j
	\qquad(1\leq i,j\leq n).
\end{equation}

\begin{proposition}\label{prop-dictionary}
	Let
	\[
	\omega:=\frac{de}{\ell}=\sum_{i=1}^{n} r_i\,dy_i ,
	\qquad
	\alpha:=-\frac{d\ell}{\ell}=-\sum_{i=1}^{n} b_i\,dy_i .
	\]
	Then $\omega\neq0$ and $\alpha$ are $1$-forms over $\widetilde K$
	satisfying \eqref{equ-acpw-def}, $\exp(\int\alpha)=1/\ell$, and the
	first integral $\int\omega\exp(-\int\alpha)$ is $e$ itself.
\end{proposition}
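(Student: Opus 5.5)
The verification is a direct computation from \eqref{equ-e-ell}, \eqref{equ-rb-compat} and \eqref{equ-wedge-D}.

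First, the coefficients $r_i$ and $b_i$ lie in $\widetilde K$ by (\ref{thmB1}) and (\ref{thmB2}) of Theorem~\ref{main-thm-B}, so $\omega$ and $\alpha$ are $1$-forms over $\widetilde K$. Moreover $r_{j_0}=1$, hence $\omega\neq0$.

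Next, $\omega\wedge\Omega=0$. Since $\omega=de/\ell$, the second identity in \eqref{equ-wedge-D} gives
\[
\omega\wedge\Omega=\ell^{-1}(De)\,dy_1\wedge\cdots\wedge dy_n=0,
\]
because $De=0$. Equivalently, $\sum_i r_if_i=De/\ell=0$.

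For $d\omega=\alpha\wedge\omega$, write
\[
d\omega=\sum_{j<i}(\delta_jr_i-\delta_ir_j)\,dy_j\wedge dy_i .
\]
By \eqref{equ-rb-compat} the coefficient is $r_jb_i-r_ib_j$. On the other hand,
\[
\alpha\wedge\omega=-\sum_{i,j}b_jr_i\,dy_j\wedge dy_i,
\]
and its $dy_j\wedge dy_i$ coefficient for $j<i$ is $-(b_jr_i-b_ir_j)=r_jb_i-r_ib_j$. The two sides agree.

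Intrinsically, this is just
\[
d\omega=d(\ell^{-1})\wedge de=-\ell^{-1}\,\frac{d\ell}{\ell}\wedge de=\alpha\wedge\omega .
\]
This holds formally over the $\Delta$-field generated by $e$ and $\ell$, where $d$ of an element is defined through the $\delta_i$. I would present this intrinsic version and note that it is consistent with \eqref{equ-rb-compat}.

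Similarly, $d\alpha=0$ is the second relation in \eqref{equ-rb-compat}, $\delta_jb_i=\delta_ib_j$. Equivalently, $d(d\ell/\ell)=0$.

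Finally, $\alpha=-d\ell/\ell=d(\ell^{-1})/\ell^{-1}$. So $1/\ell$ is an exponential of an integral of $\alpha$, i.e.\ $\exp(\int\alpha)=1/\ell$. Then
\[
\omega\exp\Bigl(-\int\alpha\Bigr)=\omega\,\ell=de,
\]
whose integral is $e$, up to an additive constant. This integral is a Liouvillian first integral by hypothesis.

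The only delicate point is interpretive: $\exp(\int\alpha)$ is defined only up to a multiplicative constant, and $\int$ only up to an additive constant. I will state that with the choice $\exp(\int\alpha)=1/\ell$, which is legitimate because $\delta_i(1/\ell)/(1/\ell)=-b_i$ is the $i$-th coefficient of $\alpha$, and with $e$ as the antiderivative, the identification is exact. No real obstacle is expected.
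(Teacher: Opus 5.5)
Your proposal is correct and follows the paper's proof: you get the coefficients in $\widetilde K$ with $r_{j_0}=1$, derive $\omega\wedge\Omega=0$ from \eqref{equ-wedge-D} and $De=0$, compare $d\omega$, $\alpha\wedge\omega$ and $d\alpha$ coefficientwise via \eqref{equ-rb-compat}, and finish with $\ell\,\omega=de$. Your intrinsic check $d\omega=d(\ell^{-1})\wedge de=\alpha\wedge\omega$ is also valid; it is the argument the paper itself uses in the proof of Proposition~\ref{prop-regular}.
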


\begin{proof}
	The coefficients of $\omega$ and $\alpha$ lie in $\widetilde K$, and since $r_{j_0}=1$ we have $\omega\neq0$. By \eqref{equ-wedge-D},
	$\ell\,\omega\wedge\Omega=de\wedge\Omega=(De)\,dy_1\wedge\cdots\wedge dy_n=0$.
	Next, since $\omega=\sum_j r_j\,dy_j$ and $\alpha=-\sum_i b_i\,dy_i$,
\begin{align*}
d\omega&=\sum_{i,j}\delta_ir_j\,dy_i\wedge dy_j
=\sum_{i<j}\big(\delta_ir_j-\delta_jr_i\big)\,dy_i\wedge dy_j,\\
\alpha\wedge\omega&=-\sum_{i,j}b_ir_j\,dy_i\wedge dy_j
=\sum_{i<j}\big(r_ib_j-r_jb_i\big)\,dy_i\wedge dy_j,\\
d\alpha&=-\sum_{i,j}\delta_ib_j\,dy_i\wedge dy_j
=-\sum_{i<j}\big(\delta_ib_j-\delta_jb_i\big)\,dy_i\wedge dy_j .
\end{align*}
Hence by \eqref{equ-rb-compat}, $d\omega=\alpha\wedge\omega$ and $d\alpha=0$. Finally $\alpha=-d\ell/\ell$ gives $\exp(\int\alpha)=1/\ell$, hence $\omega\exp(-\int\alpha)=\ell\omega=de$, and $e=\int\omega\exp(-\int\alpha)$.
\end{proof}

Following  \cite[Definition 2]{CPW25}, we call the system \eqref{system-fode} \emph{regular} if \eqref{equ-acpw-def} holds with $\omega \neq 0$ over $K$ and $\alpha$ over some Liouvillian extension of $K$; otherwise, it is called \emph{exceptional}.
 Exceptional systems exist for $n=3$ \cite[Section 5.2]{CPW25}. The next proposition identifies regularity with 
condition (\ref{thmB1}) of Theorem~\ref{main-thm-B} holding over $K$.

\begin{proposition}\label{prop-regular}
	The system \eqref{system-fode} is regular if and only if it admits a
	Liouvillian first integral $e$ with $\delta_ie/\delta_je\in K$ for all
	$i,j$ with $\delta_je\neq0$, that is, if and only if condition (\ref{thmB1}) of Theorem~\ref{main-thm-B}  holds over $K$. In that case $\alpha$ may also be taken over $K$.
\end{proposition}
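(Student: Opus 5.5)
We prove the two implications separately, using Proposition~\ref{prop-descent} for the forward direction and a direct construction for the converse; in both, the dictionary \eqref{equ-wedge-D} converts between forms and derivations.

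\emph{Regular implies (\ref{thmB1}) over $K$.} Suppose \eqref{equ-acpw-def} holds with $\omega=\sum_i r_i\,dy_i\neq0$ over $K$ and $\alpha=\sum_i a_i\,dy_i$ over a Liouvillian extension $M$ of $K$.
\begin{enumerate}[(1)]
\item The condition $d\alpha=0$ means $\delta_ia_j=\delta_ja_i$. So the system $\delta_iY=-a_iY$ is integrable over $M$, and it has a nonzero solution $\ell$ in $K_\infty$, so that $\ell=\exp(-\int\alpha)$.
\item Using $d\omega=\alpha\wedge\omega$, expanded in coordinates as in the proof of Proposition~\ref{prop-dictionary}, the coefficients satisfy
\[\delta_i(\ell r_j)=\delta_j(\ell r_i).\]
Hence $\delta_iY=\ell r_i$ is integrable over $M(\ell)$, with a solution $e\in K_\infty$.
\item The field $M(\ell,e)$ is a Liouvillian extension of $K$. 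By \eqref{equ-wedge-D}, $De=\ell\sum_i r_if_i=0$, since $\omega\wedge\Omega=0$.
\item Since $\omega\neq0$, some $\delta_je=\ell r_j$ is nonzero, so $e$ is a Liouvillian first integral.
\item For $\delta_je\neq0$ we get $\delta_ie/\delta_je=r_i/r_j\in K$, which is condition (\ref{thmB1}) over $K$.
\end{enumerate}

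\emph{(\ref{thmB1}) over $K$ implies regular, with $\alpha$ over $K$.} Let $e$ be a Liouvillian first integral with $\delta_ie/\delta_je\in K$ whenever $\delta_je\neq0$. By Theorem~\ref{main-thm-B} and Proposition~\ref{prop-descent}, replace $e$ by a Liouvillian first integral $\tilde e$ satisfying both (\ref{thmB1}) and (\ref{thmB2}) over $K$. One point needs care here: Proposition~\ref{prop-descent} is stated for an $e$ as in Theorem~\ref{main-thm-B}, while our $e$ is only assumed to satisfy (\ref{thmB1}) over $K$. I would check that the proof of Proposition~\ref{prop-descent} works directly for our $e$, as follows.
\begin{enumerate}[(1)]
\item Fix $j_0$ with $\delta_{j_0}e\neq0$, set $\ell_0:=\delta_{j_0}e$, and let $u_i:=\delta_i\ell_0/\ell_0$. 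These satisfy $\delta_iu_j=\delta_ju_i$.
\item The descent argument needs $U_i\in K$ with $\delta_iU_j=\delta_jU_i$ and
\[\delta_jg_i-\delta_ig_j=U_ig_j-U_jg_i,\qquad g_i:=\delta_ie/\ell_0\in K.\]
\item Since $g_{j_0}=1$, taking $i=j_0$ in the relation from (2) gives $U_j=-\delta_{j_0}g_j+U_{j_0}g_j$. So all $U_j$ are determined by $U_{j_0}$ and lie in $K$ once $U_{j_0}\in K$.
\item It remains to find $U_{j_0}\in K$ making the $U_j$ closed. Without (\ref{thmB2}) over a finite extension this is not obvious. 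Instead, I would run the argument after first replacing $e$ by a first integral as in Theorem~\ref{main-thm-B}, obtained from the proof of Theorem~\ref{main-thm-B}.
\item To keep (\ref{thmB1}) over $K$ through that replacement, note that in the $n$-dimensional autonomous setting the ratios $\delta_ie/\delta_je$ describe the codimension-one foliation $\omega$ defined over $K$. All first integrals produced in Corollary~\ref{thm-in-Liouv-PV} and Lemma~\ref{lemma-trade} are functions of level sets compatible with this foliation, so they share these ratios up to the factor $\ell$.
\end{enumerate}
This preservation of (\ref{thmB1}) over $K$ under the replacement is the main obstacle. If it holds, apply Proposition~\ref{prop-descent} and then Proposition~\ref{prop-dictionary} with $\widetilde K=K$. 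This gives $\omega=d\tilde e/\ell$ and $\alpha=-d\ell/\ell$, both with coefficients $r_i,b_i\in K$, satisfying \eqref{equ-acpw-def} with $\omega\neq0$. Hence the system is regular with $\alpha$ over $K$.

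The routine parts are the coordinate computations with \eqref{equ-rb-compat}. The substantive points are the existence of the solutions $\ell$ and $e$ in $K_\infty$, which follows from Theorem~\ref{thm-K-inf}(ii), and the compatibility issue in the converse.
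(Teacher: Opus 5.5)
The gap is in your converse direction: it makes regularity depend on step (5), which is not a valid argument. The regularity part has a short direct proof that does not need Proposition~\ref{prop-descent} at all.

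Your implication ``regular $\Rightarrow$ a Liouvillian first integral with $\delta_ie/\delta_je\in K$'' is correct and is exactly the paper's argument ($\ell=\exp(-\int\alpha)$, $d(\ell\omega)=0$, $e=\int\ell\omega$, $De=0$ by \eqref{equ-wedge-D}). In the converse you never actually show that such an $e$ makes the system regular. You route regularity through Proposition~\ref{prop-descent}, which also needs $e$ to satisfy (\ref{thmB2}) over a finite extension, and you bridge this with step (5), which you leave conditional.

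Step (5) fails because the first integrals produced in the proof of Theorem~\ref{main-thm-B} come from auxiliary $D$-constants. Examples are $e_2/e_1$ with $e_iw\in\mathrm{Span}_C\{\sigma(w)\}$ in the proof of Theorem~\ref{main-thm-A}, or an arbitrary first integral $\tilde e\in\mathcal E$ in Lemma~\ref{lemma-trade}. When $|\Delta|\geq 3$, the field $C(E,D)$ may contain first integrals whose gradients are not proportional to $de$. For instance, for $y_1\delta_1+\mu y_2\delta_2+\nu y_3\delta_3$ with $1,\mu,\nu$ linearly independent over $\mathbb Q$, both $y_2y_1^{-\mu}$ and $y_3y_1^{-\nu}$ are Liouvillian first integrals. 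So nothing forces the output to have the ratios $\delta_ie/\delta_je$.

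The missing observation is that regularity needs no descent, since $\alpha$ only has to be defined over \emph{some} Liouvillian extension. With $\lambda:=\delta_{j_0}e\neq0$, put $\omega:=de/\lambda$ (nonzero, with coefficients $\delta_ie/\lambda\in K$) and $\alpha:=-d\lambda/\lambda$. Then $\omega\wedge\Omega=0$ by \eqref{equ-wedge-D} and $d\alpha=0$. Applying $d$ to $de=\lambda\omega$ gives $0=d\lambda\wedge\omega+\lambda\,d\omega$, i.e.\ $d\omega=\alpha\wedge\omega$. This is how the paper proves it.

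For the final sentence of the statement, the paper does what you do at the end: it takes $e$ as produced by Proposition~\ref{prop-descent}, so that $\alpha=-d\lambda/\lambda$ is over $K$ by (\ref{thmB2}). However, it applies Proposition~\ref{prop-descent} directly to a first integral as in Theorem~\ref{main-thm-B} for which (\ref{thmB1}) already holds over $K$. That is how it reads ``condition (\ref{thmB1}) of Theorem~\ref{main-thm-B} holds over $K$''; it does not try to carry (\ref{thmB1}) over $K$ through the proof of Theorem~\ref{main-thm-B}.

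You are right that an arbitrary $e$ with (\ref{thmB1}) over $K$ need not itself satisfy (\ref{thmB2}) over a finite extension. Replacing $e$ by $\exp(e)$ leaves the ratios unchanged, but $\delta_i(\delta_j\exp(e))/\delta_j\exp(e)=\delta_ie+\delta_i(\delta_je)/\delta_je$. If you want the clause for such an $e$, step (5) does not supply it and a new argument is needed. One option is to redo the Galois-theoretic construction with the commuting derivations $\delta_j-(\delta_je/\delta_{j_0}e)\,\delta_{j_0}$, $j\neq j_0$. Their joint constants are exactly the elements with gradient proportional to $\omega$, so (\ref{thmB1}) over $K$ becomes automatic, as in the case $|\Delta|=2$.
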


\begin{proof}
    Let $e$ be such a first integral and choose $j_0$ with
	$\delta_{j_0}e\neq0$. Put $\lambda:=\delta_{j_0}e$, $\omega:=de/\lambda$
and $\alpha:=-d\lambda/\lambda$, a $1$-form over the Liouvillian extension
of $K$ containing $e$. Then $\omega\neq0$ has coefficients
	$\delta_ie/\lambda\in K$, and $\omega\wedge\Omega=0$ by
	\eqref{equ-wedge-D}. Clearly $d\alpha=0$, and applying $d$ to the equation $de=\lambda\omega$ gives
	$0=d\lambda\wedge\omega+\lambda\,d\omega$, that is,
	$d\omega=\alpha\wedge\omega$. Hence, the system is regular. If $e$ is
	chosen as in Proposition~\ref{prop-descent}, then
	$\delta_i\lambda/\lambda\in K$ by (\ref{thmB2}), so $\alpha$ is defined over $K$.

		Conversely, suppose \eqref{equ-acpw-def} holds with $\omega\neq0$
	over $K$, and write $\omega=\sum_i\omega_i\,dy_i$,
	$\alpha=\sum_i\alpha_i\,dy_i$. Since $d\alpha=0$, the system
	$\delta_iY=-\alpha_iY$, $1\leq i\leq n$, is integrable. Therefore, there is a
	nonzero $m$ in a Liouvillian extension of $K$ with $dm/m=-\alpha$.
	Then $d(m\omega)=m(-\alpha\wedge\omega+d\omega)=0$ and the system
	$\delta_iY=m\omega_i$, $1\leq i\leq n$, is integrable. It also has a
	solution $e$ in a Liouvillian extension of $K$. Now from \eqref{equ-wedge-D},
	$de\wedge\Omega=m\,\omega\wedge\Omega=0$ gives $De=0$. Since $de=m\omega\neq 0, e$ is nonconstant, and we have
	$\delta_ie/\delta_je=\omega_i/\omega_j\in K$.
\end{proof}

%%%%%%%%%%%%%%%%%%%%%%%%%%%%%%%%%%%%%%%%%%%%%%%%%%%%%%%%%%%%%%%%%%%%%%%%%%%%%
\section{Non-Liouvillian first integrals}\label{sec-non-liouv}
%%%%%%%%%%%%%%%%%%%%%%%%%%%%%%%%%%%%%%%%%%%%%%%%%%%%%%%%%%%%%%%%%%%%%%%%%%%%%

\subsection{The Liouvillian hypothesis in Theorem~\ref{main-thm-A} cannot be dropped}
\label{subsec-counter}
Let $k = \mathbb{C}$ and consider the autonomous planar system
\begin{equation}\label{example-counter-sys}
  \begin{cases}
    y_1' = 1,\\
    y_2' = -y_2^2 + y_1.
  \end{cases}
\end{equation}
Let $K = \mathbb{C}(y_1, y_2)$ be the rational function field, equipped with
the set of derivations $\Delta = \{\delta_1, \delta_2\}$, so that
$D = \delta_1 + (y_1 - y_2^2)\delta_2$. Consider the integrable system
\begin{equation}\label{equ-counter-pv}
  \delta_1 Y = \begin{pmatrix} 0 & 1 \\ y_1 & 0 \end{pmatrix} Y, \qquad
  \delta_2 Y = 0
\end{equation}
over $K$.\footnote{The equation
$\delta_1 Y = \left(\begin{smallmatrix} 0 & 1 \\ y_1 & 0
\end{smallmatrix}\right) Y$ is the matrix form of the classical Airy
equation $\delta_1^2y=y_1y$.}

Let $E$ be the Picard-Vessiot extension of $K$ for the system
\eqref{equ-counter-pv}, let $R \subset E$ be its Picard-Vessiot ring, and
let $Z\in \mathrm{GL}_2(E)$ be a fundamental solution matrix. Since
$\delta_2 Z=0$ and the second row of $Z$ is the $\delta_1$-derivative of the first, we have
$$Z = \begin{pmatrix}
    u_1 & u_2 \\ \delta_1(u_1) & \delta_1(u_2)
\end{pmatrix}, \qquad \delta_2(u_1)=\delta_2(u_2)=0,$$
 and
$E = K(u_1, u_2, \delta_1 u_1, \delta_1 u_2)$. Since the matrix in
\eqref{equ-counter-pv} has trace zero, $\det(Z)$ is a constant, which we
may assume to be $1$. Thus, the assignment $\sigma\mapsto C_\sigma$, defined
by $\sigma(Z)=ZC_\sigma$, embeds $\mathscr{G}(E|K)$ as a closed subgroup of
$\mathrm{SL}_2(\mathbb{C})$ (see Section~\ref{galoiscorrespondence}).

Let $M:=\mathbb{C}(y_1)(u_1, u_2, \delta_1 u_1, \delta_1 u_2)\subset E$. Note that $M$ is a $\delta_1$-field and that  $\delta_2$ is the zero derivation on $M$. Thus, $C(M,\delta_1)\subseteq C(E,\Delta)=\mathbb{C}$ and therefore
$M$ is the Picard-Vessiot extension of the $\delta_1$-field
$\mathbb{C}(y_1)$ for the scalar differential equation $\delta^2_1y=y_1y.$ It is well known that the differential Galois group of $M$ over
$\mathbb{C}(y_1)$ is $\mathrm{SL}_2(\mathbb{C})$ (see
\cite[Example 4.29]{Mag94}). Note that $E=M(y_2)$. Since $\delta_2y_2=1$ and that $\delta_2$ is the zero derivation on  $M$, $y_2$ must be transcendental over $M$. Now, every $\sigma\in\mathscr{G}(M|\mathbb C(y_1))$ naturally extends to a $K$-$\Delta$-automorphism of $E$; $\sigma(y_2)=y_2$ and $\sigma(Z)=ZC_\sigma$. Therefore the embedding
$\sigma\mapsto C_\sigma$ of $\mathscr{G}(E|K)$ into
$\mathrm{SL}_2(\mathbb{C})$ is surjective and
we obtain $\mathscr{G}(E|K)\cong\mathrm{SL}_2(\mathbb{C})$. In particular,
$\mathrm{tr.deg}(E|K)=\dim \mathrm{SL}_2(\mathbb{C})=3$, and therefore
$u_1$, $u_2$ and $\delta_1 u_1$ are $K$-algebraically independent.

We claim that the system~\eqref{example-counter-sys} admits a first integral
in the Picard-Vessiot extension $E$ of $K$, but none in its
Picard-Vessiot ring $R$.

\noindent\textbf{Existence of a first integral.} Let $e := p_2 / p_1$,
where for $i = 1, 2$,
$$p_i := u_i y_2 - \delta_1 u_i.$$
A direct calculation shows that $De = 0$ and
$\delta_2(e) = 1 / p_1^2\neq 0$. Thus, $e$ is a first integral of the
system~\eqref{example-counter-sys}.

\noindent\textbf{The first integral $e$ does not belong to $R$.} For
any $\sigma \in \mathscr{G}(E|K)$ and $j = 1, 2$, we have
$\sigma(u_j) = c_{1j\sigma} u_1 + c_{2j\sigma} u_2$, where
$(c_{ij\sigma}) \in \mathrm{SL}_2(\mathbb{C})$. It follows that
$\sigma(p_j) = c_{1j\sigma} p_1 + c_{2j\sigma} p_2$. This yields
$$\sigma(e) = \frac{c_{12\sigma} + c_{22\sigma} e}{c_{11\sigma} + c_{21\sigma} e}
\qquad \text{for all } \sigma \in \mathscr{G}(E|K).$$
Let $W = \mathrm{Span}_{\mathbb{C}} \{ \sigma(e) : \sigma \in
\mathscr{G}(E|K) \}$. For each $c \in \mathbb{C}$, consider the
$K$-$\Delta$-automorphism of $E$ induced by the matrix
$(c_{ij\sigma}) \in \mathrm{SL}_2(\mathbb{C})$ with $c_{11\sigma} = c$,
$c_{12\sigma} = -1$, $c_{21\sigma} = 1$, and $c_{22\sigma} = 0$. This
$K$-$\Delta$-automorphism maps $e$ to $\frac{-1}{c + e}$, which implies that
$$\mathrm{Span}_{\mathbb{C}} \left\{ \frac{1}{e + c} : c \in \mathbb{C}
\right\} \subset W.$$
Since $e$ is transcendental over $\mathbb{C}$, the elements $1/(e+c)$,
$c\in\mathbb{C}$, are linearly independent over $\mathbb{C}$. Consequently,
$W$ is infinite-dimensional over $\mathbb{C}$ and, by
Theorem~\ref{char-PV-ring}, $e \notin R$.

\noindent\textbf{Non-existence of first integrals in the Picard-Vessiot ring
$R$.} Consider the $\Delta$-field $L:=K(u_1, \delta_1 u_1)$, which is
the fixed field of the subgroup of all upper triangular matrices in
$\mathrm{SL}_2(\mathbb{C})$ having the value $1$ on the diagonal. Observe
that $D(u_1) = \delta_1 u_1$ and
$D(\delta_1 u_1) = \delta_1^2 u_1 = y_1 u_1$.

We first claim that $L$ contains no first integrals of the system
\eqref{example-counter-sys}, that is, $C(L,D)=\mathbb{C}$. Note that $D$
and $\delta_1$ agree on $M$, so that $M$ is a $D$-subfield of $E$
with $C(M,D)=\mathbb{C}$. Put $w:=\delta_1(u_2)/u_2\in M$. Then $w$ is a
solution of the Riccati differential equation $\delta_1(w) = -w^2 + y_1$.
Since $\det(Z)=1$, we have $u_2\,(u_1w-\delta_1u_1)=1$, and therefore
$\mathbb{C}(y_1)(u_1,\delta_1u_1,w)$ contains $u_2$ and
$\delta_1u_2=wu_2$; that is, it equals $M$. As
$\mathrm{tr.deg}(M|\mathbb{C}(y_1))=3$, the elements $u_1$,
$\delta_1 u_1$ and $w$ are algebraically independent over
$\mathbb{C}(y_1)$. Hence there is a $\mathbb{C}(y_1)$-embedding of fields
$\varphi\colon L\to M$ with
$$\varphi(y_2)=w,\qquad \varphi(u_1)=u_1,\qquad
\varphi(\delta_1u_1)=\delta_1u_1.$$
Since $D(y_2)=-y_2^2+y_1$ and $D(w)=-w^2+y_1$, and since
$D(u_1) = \delta_1 u_1$ and $D(\delta_1 u_1) = y_1 u_1$, the map $\varphi$
is an embedding of $D$-fields. Since
$C(M, D)=\mathbb{C}$, we obtain that $C(L,D)=\mathbb{C}$. This proves the
claim.

We shall now show that the Picard-Vessiot ring $R$ contains no first
integrals of the system \eqref{example-counter-sys}. Assume for a
contradiction that $e_1 \in R$ is a first integral. Then both $e$
and $e_1$ belong to $E \setminus L$. We have
$$\delta_2(e) = \frac{1}{p_1^2} \in L, \qquad
\delta_1(e) = \frac{y_2^2 - y_1}{p_1^2} \in L,$$
and $E$ is a Picard-Vessiot extension of $L$ with a differential
Galois group isomorphic to the additive group $\mathbb{G}_a$. Since
$e\notin L$ and $\mathbb{G}_a$ has no proper nontrivial algebraic subgroups,
it follows that $E = L(e)$. Moreover, $L[e]$ is the Picard-Vessiot
ring of $E$ over $L$ and therefore $R\subset L[e]$. Write
$$e_1 = a_l e^l + a_{l-1} e^{l-1} + \dots + a_0, \quad
\text{with } a_i\in L,\ a_l \neq 0 \text{ and } l \geq 1.$$
Since $De=0$, we have $0=D(e_1)=\sum_{i=0}^{l}D(a_i)e^i$, and since $e$ is
transcendental over $L$, $D(a_i)=0$ for all $i$. Because
$C(L,D)=\mathbb{C}$, we get $e_1=q(e)$ for a nonconstant polynomial
$q\in\mathbb{C}[X]$. For each $c\in\mathbb{C}$, let $\sigma_c\in\mathscr{G}(E|K)$ be the
automorphism considered above, such that $\sigma_c(e)=-1/(e+c)$. Then
$\sigma_c(e_1)=q\big(-1/(e+c)\big)$ is a rational function of $e$ whose
only pole is at $e=-c$. This implies that
$\mathrm{Span}_{\mathbb{C}}\{\sigma(e_1):\sigma\in\mathscr{G}(E|K)\}$ is not
finite-dimensional. By Theorem~\ref{char-PV-ring}, $e_1\notin R$, a
contradiction. Consequently, the system possesses no first integrals in the
Picard-Vessiot ring $R$.

%%%%%%%%%%%%%%%%%%%%%%%%%%%%%%%%%%%%%%%%%%%%%%%%%%%%%%%%%%%%%%%%%%%%%%%%%%%%%%%%%%%%%%%%%%%%%%%%%%%%%%%%%%%%%%%%%%%%%%%%%%%%%%%%%%%%%%%%%%%%%%%%%%%%%%%%%%%%%%%%%%%%%%%%%%%%%%%%%%%%%%
\subsection{A first integral in a Picard-Vessiot ring need not be Liouvillian}
\label{subsec-airy}

Theorem~\ref{main-thm-A} produces a first integral in the Picard-Vessiot
ring from a first integral in a Liouvillian Picard-Vessiot extension. We
shall again use the Airy differential equation to show that, without the
Liouvillian hypothesis, a first integral lying in a Picard-Vessiot ring need not be Liouvillian.

Let $k = \mathbb{C}$ and consider the system
\begin{equation}\label{example-in-ring}
  \begin{cases}
    y_1' = 1,\\
    y_2' = y_3,\\
    y_3' = y_1 y_2 .
  \end{cases}
\end{equation}
Here, $K = \mathbb{C}(y_1, y_2, y_3)$, with
$\Delta = \{\delta_1, \delta_2, \delta_3\}$, and
$D = \delta_1 + y_3 \delta_2 + y_1 y_2 \delta_3$. Consider the integrable
system
\begin{equation}\label{equ-airy-pv}
  \delta_1 Y = \begin{pmatrix} 0 & 1 \\ y_1 & 0 \end{pmatrix} Y, \qquad
  \delta_2 Y = \delta_3 Y = 0
\end{equation}
over $K$. As in Section \ref{subsec-counter}, let
$E = K(u_1, u_2, \delta_1 u_1, \delta_1 u_2)$ be the Picard-Vessiot
extension of $K$ for the system~\eqref{equ-airy-pv}, with Picard-Vessiot
ring $R=K[u_1, u_2, \delta_1 u_1, \delta_1 u_2]$, where
$u_1,u_2\in R$ are such that
$$Z := \begin{pmatrix} u_1 & u_2 \\ \delta_1 u_1 & \delta_1 u_2 \end{pmatrix}
\in \mathrm{SL}_2(R)$$
is a fundamental solution matrix of the system. As before,
$\mathscr{G}(E|K)\cong\mathrm{SL}_2(\mathbb{C})$. For the elements
$$e_1 := y_2 \delta_1 u_2 - y_3 u_2, \qquad e_2 := y_3 u_1 - y_2 \delta_1 u_1,$$
we have\footnote{These are the constants of integration:
$(y_2, y_3)^{\mathsf t} = Z (c_1, c_2)^{\mathsf t}$ exactly when
$(c_1, c_2) = (e_1, e_2)$.}
$(e_1, e_2)^{\mathsf t} = Z^{-1} (y_2, y_3)^{\mathsf t}$. Since
$\delta_2 u_i = \delta_3 u_i = 0$, it follows that $D u_i = \delta_1 u_i$
and $D(\delta_1 u_i) = y_1 u_i$. Given that $D y_2 = y_3$ and
$D y_3 = y_1 y_2$, direct calculation shows that
\begin{align*}
De_1 &= y_3 \delta_1 u_2 + y_2 y_1 u_2 - y_1 y_2 u_2 - y_3 \delta_1 u_2 = 0,\\
De_2 &= y_1 y_2 u_1 + y_3 \delta_1 u_1 - y_3 \delta_1 u_1 - y_2 y_1 u_1 = 0.
\end{align*}
Moreover, $\delta_2 e_1 = \delta_1 u_2 \neq 0$ and
$\delta_2 e_2 = -\delta_1 u_1 \neq 0$, which ensures
$e_1, e_2 \notin \mathbb{C}$. Thus, $e_1$ and $e_2$ are first integrals
of~\eqref{example-in-ring} inside the Picard-Vessiot ring $R$.

We shall now show that no element of $R\setminus K$ lies in a
Liouvillian extension of $K$; in particular, $e_1$ and $e_2$ are not
Liouvillian over $K$. Let $e\in R\setminus K$ and assume for a
contradiction that $e$ lies in a Liouvillian extension of $K$. Since
$e \in R$, it is contained in a $\Delta$-stable, finite-dimensional
$K$-subspace of $K_\infty$ by Theorem~\ref{char-PV-ring}. By
Proposition~\ref{lemma-holo-liouv}, $e$ belongs to a Liouvillian
Picard-Vessiot extension $F$ of $K$.

By \cite[Corollary 2, p.~402]{Kol73}, the field $E \cap F$ is
a Picard-Vessiot extension of $K$. Its Galois group is a quotient of
$\mathscr{G}(E|K) \cong \mathrm{SL}_2(\mathbb{C})$, hence connected,
and a quotient of $\mathscr{G}(F|K)$, hence solvable by
Theorem~\ref{Liou_PV}. The proper normal algebraic subgroups of
$\mathrm{SL}_2(\mathbb{C})$ are $\{I\}$ and $\{\pm I\}$, and neither
$\mathrm{SL}_2(\mathbb{C})$ nor $\mathrm{PSL}_2(\mathbb{C})$ is solvable.
Therefore, this Galois group is trivial, which yields
$e\in E \cap F=K$, a contradiction.

Thus, the system~\eqref{example-in-ring} admits first integrals in the
Picard-Vessiot ring $R$, none of which is Liouvillian over $K$.
Together with the example in Section~\ref{subsec-counter}, this shows that
the Liouvillian hypothesis in Theorem~\ref{main-thm-A} cannot be dropped,
and that a first integral in a Picard-Vessiot ring need not be Liouvillian.

\bibliographystyle{amsplain}
	\bibliography{KSS}
	
\end{document}